\documentclass[final,3p,times,sort&compress]{elsarticle}
\usepackage{lineno,hyperref}
\modulolinenumbers[5]
\usepackage{caption}
\usepackage{subcaption}
\usepackage{framed}
\usepackage{amsmath}
\usepackage{bm}
\usepackage{dsfont}
\usepackage{graphicx,color,url}
\usepackage{epstopdf}
\usepackage{multirow}
\usepackage{chngpage}
\usepackage{array}
\usepackage{float}
\usepackage{lineno}
\usepackage{nomencl} 
\makenomenclature
\usepackage{float}
\usepackage{setspace} 
\usepackage[english]{babel}
\usepackage{amsthm}
\newtheorem{theorem}{Theorem}
\newdefinition{definition}{Definition}
\newdefinition{remark}{Remark}
\newdefinition{assumption}{Assumption}
\newproof{pf}{Proof}
\usepackage{mathtools,xparse}


\newcommand{\bq}{{q}}
\newcommand{\by}{{y}}
\newcommand{\brho}{{\rho}}

\newcommand{\blambda}{{\lambda}}
\newcommand{\bv}{{v}}
\newcommand{\bV}{{V}}
\newcommand{\bZ}{{ Z}}
\newcommand{\bx}{{x}}
\newcommand{\bX}{{X}}
\newcommand{\bz}{{z}}

\newcommand{\bPhi}{{ \Phi}}
\newcommand{\timp}{{t_{\rm eve}}}
\newcommand{\beforeimp}{|_{\timp}^{-}}
\newcommand{\afterimp}{|_{\timp}^{+}}
\newcommand{\atimp}{|_{\timp}}

\newcommand{\bg}{{g}}
\newcommand{\fun}{{ f}}

\newcommand{\Force}{\mathsf{F}}

\newcommand{\Faccel}{\mathsf{C}}
\newcommand{\bQ}{{ Q}}
\newcommand{\Mass}{\mathsf{M}}

\newcommand{\Rez}{\mathsf{R}}

\newcommand{\bzero}{\mathsf{0}}

\newcommand{\bI}{\mathsf{I}}

\newcommand{\Permutation}{\mathsf{P}}


\newcommand{\fin}{{\left(\,t,\,\bq,\,\bv,\,\brho \, \right)}}
\newcommand{\fincost}{\left(\,t,\,\bq,\,\bv,\,\dbv,\, \brho\,\right)}

\newcommand{\fincostdae}{\bigl(\,t,\,\bq(t,\brho),\,\bv(t,\brho),\,\dbv(t,\brho),\,\lambda(t,\brho),\,\brho\,\bigr)}

\newcommand{\fode}{{ \fun^{\scalebox{0.5}{\rm ode}}}} 
\newcommand{\fdae}{{ \fun^{\scalebox{0.4}{\rm DAE}}}}  
  
\newcommand{\fdaeimpv}{{ \fun^{\scalebox{0.4}{\rm DAE-imp-$\bv$}}}}  
\newcommand{\fdaeimpl}{{ \fun^{\scalebox{0.4}{\rm DAE-imp-$\lambda$}}}}  
\newcommand{\feom}{{ \fun^{\scalebox{0.5}{\rm eom}}}}  
\newcommand{\frho}{{\fun_\brho^{\scalebox{0.5}{\rm ode}}}}

\newcommand{\feomq}{{ \fun_\bq^{\scalebox{0.5}{\rm eom}}}}
\newcommand{\feomv}{{ \fun_\bv^{\scalebox{0.5}{\rm eom}}}}

\newcommand{\feomrhoi}{{\fun_{\brho_i}^{\scalebox{0.5}{\rm eom}}}}
\newcommand{\fdaeq}{{ \fun_\bq^{\scalebox{0.4}{\rm DAE}}}}
\newcommand{\fdaev}{{ \fun_\bv^{\scalebox{0.4}{\rm DAE}}}}
\newcommand{\fdaelb}{{ \fun^{\scalebox{0.4}{\rm DAE-{\scalebox{1.5}{$\lambda$}}}}}}
\newcommand{\fdaelbq}{{ \fun_\bq^{\scalebox{0.4}{\rm DAE-{\scalebox{1.5}{$\lambda$}}}}}}
\newcommand{\fdaelbv}{{ \fun_\bv^{\scalebox{0.4}{\rm DAE-{\scalebox{1.5}{$\lambda$}}}}}}
\newcommand{\fdaelbrhoi}{{ \fun_{\brho_i}^{\scalebox{0.4}{\rm DAE-{\scalebox{1.5}{$\lambda$}}}}}}
\newcommand{\fdaedv}{{ \fun^{\scalebox{0.4}{\rm DAE-{\scalebox{1.5}{$\dbv$}}}}}}
\newcommand{\fdaedvq}{{ \fun_\bq^{\scalebox{0.4}{\rm DAE-{\scalebox{1.5}{$\dbv$}}}}}}
\newcommand{\fdaedvv}{{ \fun_\bv^{\scalebox{0.4}{\rm DAE-{\scalebox{1.5}{$\dbv$}}}}}}
\newcommand{\fdaedvrhoi}{{ \fun_{\brho_i}^{\scalebox{0.4}{\rm DAE-{\scalebox{1.5}{$\dbv$}}}}}}
\newcommand{\fdaerho}{{ \fun_\brho^{\scalebox{0.4}{\rm DAE}}}}

\newcommand{\g}{\bg\fincost}
\newcommand{\gdae}{\bg\fincostdae}

\newcommand{\w}{w \bigl(t_F,\bq(t_F,\brho),\bv(t_F,\brho),\brho \bigr)}

\renewcommand{\Re}{{\mathds{R}}}

\newcommand{\dPhi}{{\dot \bPhi}}
\newcommand{\ddPhi}{{\ddot \bPhi}}
\newcommand{\dPhidq}{{\bPhi}_{\bq}}
\newcommand{\dPhidqq}{{\bPhi}_{\bq , \, \bq}}
\newcommand{\dPhidqdrho}{\bPhi_{\bq, \, \brho}}
\newcommand{\dPhidtdq}{\bPhi_{t, \, \bq }}

\newcommand{\dPhidqplus}{{{\bPhi}^+_{\bq}}}
\newcommand{\dPhidqqplus}{{{\bPhi}^+_{\bq , \, \bq}}}
\newcommand{\dPhidtdrhoplus}{{\bPhi^+_{t,\, \brho }}}
\newcommand{\dPhidqdrhoplus}{{\bPhi^+_{\bq, \, \brho}}}
\newcommand{\dPhidtdqplus}{{\bPhi^+_{t, \, \bq }}}
\newcommand{\dPhidtdvplus}{{\bPhi^+_{t, \,\bv }}}

\newcommand{\dtdPhidq}{{\dPhi}_{\bq}}
\newcommand{\frdrdrho}{\frac{dr}{d\brho}}

\newcommand{\frdtdrho}{\frac{d\timp}{d\brho}}
\newcommand{\Drho}{D_\brho}

\newcommand{\qrho}{\Drho\bq}
\newcommand{\dqrho}{\Drho\dbq}
\newcommand{\dtdrho}{{d\timp}/{d\brho}}

\newcommand{\dbq}{{\dot \bq}} 
 
\newcommand{\ddbq}{{\ddot \bq}} 
\newcommand{\dby}{{\dot \by}}
\newcommand{\dbv}{{\dot \bv}}
\newcommand{\dbZ}{{\dot \bZ}}
\newcommand{\dbz}{{\dot \bz}}
\newcommand{\bqq}{{{q},{q}}}
\newcommand{\dbV}{{\dot{\bV}}}

\newcommand{\qrhoplus}{{\bQ \afterimp}}
\newcommand{\qrhominus}{{\bQ \beforeimp}}

\newcommand{\dqrhoplus}{{\bV \afterimp}}
\newcommand{\dqrhominus}{{\bV \beforeimp}}

\newcommand{\Zplus}{{\bZ \afterimp}}
\newcommand{\Zminus}{{\bZ \beforeimp}}

\newcommand{\timpplus}{|_{\timp}^{+}}
\newcommand{\timpminus}{|_{\timp}^{-}}

\newcommand{\gplus}{\bg \timpplus}
\newcommand{\gminus}{\bg \timpminus}
\newcommand{\gplusarg}{\tilde{g} \big(\timp,\qtimp,\vplus, \brho\big)}
\newcommand{\gminusarg}{\tilde{g} \big(\timp,\qtimp,\vminus, \brho\big)}

\newcommand{\vplus}{{\bv \timpplus}}
\newcommand{\vminus}{\bv \timpminus}
\newcommand{\qplus}{{\bq \timpplus}}
\newcommand{\qminus}{\bq \timpminus}
\newcommand{\qtimp}{\bq|_{\timp}}
\newcommand{\vtimp}{\bv|_{\timp}}
\newcommand{\zplus}{{z \timpplus}}
\newcommand{\zminus}{z \timpminus}

\newcommand{\voneTauoneMinus}{{\bv_1}|_{\tau_1}^{-}}
\newcommand{\voneTauonePlus}{{\bv_1}|_{\tau_1}^{+}}

\newcommand{\voneTautwo}{{\bv_1}|_{\tau_2}}
\newcommand{\vtwoTauone}{{\bv_2}|_{\tau_1}}

\newcommand{\vtwoTautwoMinus}{{\bv_2}|_{\tau_2}^{-}}
\newcommand{\vtwoTautwoPlus}{{\bv_2}|_{\tau_2}^{+}}

\newcommand{\qoneTauone}{{\bq_1}|_{\tau_1}}
\newcommand{\qoneTautwo}{{\bq_1}|_{\tau_2}}
\newcommand{\qtwoTauone}{{\bq_2}|_{\tau_1}}
\newcommand{\qtwoTautwo}{{\bq_2}|_{\tau_2}}

\newcommand{\zoneTauone}{{\bz_1}|_{\tau_1}}
\newcommand{\zoneTautwo}{{\bz_1}|_{\tau_2}}
\newcommand{\ztwoTauone}{{\bz_2}|_{\tau_1}}
\newcommand{\ztwoTautwo}{{\bz_2}|_{\tau_2}}

\newcommand{\eps}{\varepsilon}

\newcommand{\zdytwo}{\bz = \int_{t_0}^{t_F} \dot y_2 \ {\rm dt}}
\newcommand{\zddytwo}{\bz = \int_{t_0}^{t_F} \ddot y_2 \ {\rm dt}}

\newcommand{\pttwo}{ y_2}
\newcommand{\dpttwo}{\dot y_2}

\nomenclature[00]{$\feom$}{The function solving the Equation of Motion}%
\nomenclature[00]{$\fode$}{The function solving the ODE system}%

\nomenclature[01]{$\bq$}{The generalized position state vector}%
\nomenclature[01]{$\dbq$}{The generalized velocity state vector}%
\nomenclature[01]{$\ddbq$}{The generalized acceleration state vector}%
\nomenclature[01]{$\y$}{The reduced-order state vector} %
\nomenclature[01]{$t_i$}{The time of impact}%

\nomenclature[02]{$\bQ$}{The generalized force vector}%
\nomenclature[02]{$\bM$}{The generalized mass matrix}%
\nomenclature[02]{$\hat{\bQ} $}{The generalized force vector}%
\nomenclature[02]{$\hat{\bM}$}{The generalized mass matrix}%

\nomenclature[03]{$\bPhi$}{The equation of constraints}%
\nomenclature[03]{$\dPhidq$}{The Jacobian of the equation of constraints}%
\nomenclature[03]{$\dPhidqq$}{The three-dimensional Jacobian of the equation of constraints}%

\nomenclature[04]{$\yrho$}{The sensitivity matrix of the $\y$ state vector with respect to the parameters $\brho$ }%
\nomenclature[04]{$\dtdrho$}{The sensitivity of the time of impact ${\bf t_i}$ with respect to the parameters $\brho$ }%
\nomenclature[04]{$\qrho$}{The sensitivity matrix of the $\bq$ state vector with respect to the parameters $\brho$ }%
\nomenclature[04]{$\dqrho$}{The sensitivity matrix of the $\dbq$ state vector with respect to the parameters $\brho$ }%

\nomenclature[05]{$\frho$}{The partial derivative of the ODE function with respect to the parameters $\brho$ }%
\nomenclature[05]{$\fy$}{The partial derivative of the ODE function with respect to the state vector $\y$ }%

\nomenclature[06]{$\nabla_{\rho}(.) $}{Gradient of a function}%
\nomenclature[06]{$\dot{(.)} $}{Time derivative of a function}%
\nomenclature[06]{$\psi(t)$}{The cost function}%
\nomenclature[06]{$g$}{The ruining cost function}%
\nomenclature[06]{$z$}{The quadrature variable}%
\nomenclature[06]{$Z$}{The sensitivity of the quadrature variable}%
\nomenclature[06]{$w$}{The terminal cost function}%

\nomenclature[07]{$\y^{-}$}{Value of the state vector $\y$ before impulse at the time of impact}%
\nomenclature[07]{$\y^{+}$}{Value of the state vector $\y$ after impulse at the time of impact}%
\nomenclature[07]{$\bq^{-}$}{Value of the generalized position state vector $\bq$ before impulse at the time of impact}%
\nomenclature[07]{$\bq^{+}$}{Value of the generalized position state vector $\bq$ after impulse at the time of impact}%
\nomenclature[07]{$\gplus$}{ruining cost function after impulse at the time of impact}%
\nomenclature[07]{$\gminus$}{ruining cost function before impulse at the time of impact}%

\nomenclature[08]{\overline{\bf M}}{Mass matrix for the penalty formulation}

\journal{Journal Name}

\begin{document}
	
\thispagestyle{empty}
\setcounter{page}{0}

\makeatletter
\def\Year#1{%
  \def\yy@##1##2##3##4;{##3##4}%
  \expandafter\yy@#1;
}
\makeatother

\begin{Huge}
\begin{center}
Computational Science Laboratory Technical Report CSL-TR-\Year{\the\year}-{\tt 7} \\
\today
\end{center}
\end{Huge}
\vfil
\begin{huge}
\begin{center}
Sebastien Corner, Corina Sandu, and Adrian Sandu
\end{center}
\end{huge}

\vfil
\begin{huge}
\begin{it}
\begin{center}
``{\tt Modeling and sensitivity analysis methodology for hybrid dynamical systems}''
\end{center}
\end{it}
\end{huge}
\vfil

\begin{large}
\begin{center}
Computational Science Laboratory \\
Computer Science Department \\
Virginia Polytechnic Institute and State University \\
Blacksburg, VA 24060 \\
Phone: (540)-231-2193 \\
Fax: (540)-231-6075 \\ 
Email: \url{scorner@vt.edu} \\
Web: \url{http://csl.cs.vt.edu}
\end{center}
\end{large}

\vspace*{1cm}

\begin{tabular}{ccc}
\includegraphics[width=2.5in]{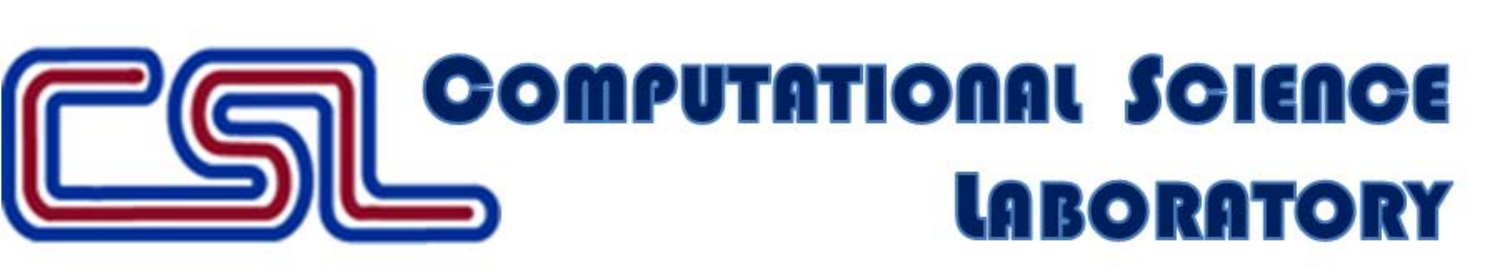}
&\hspace{2.5in}&
\includegraphics[width=2.5in]{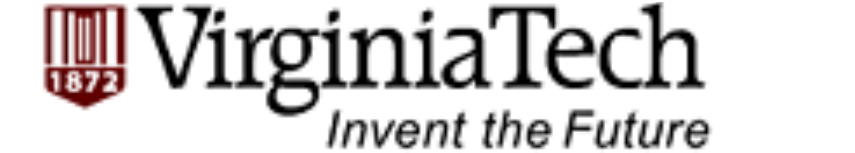} \\
{\bf\large \textit{Compute the Future}} &&\\
\end{tabular}

\newpage

\begin{frontmatter}

\title{Modeling and sensitivity analysis methodology for hybrid dynamical systems} 


\author[rvt,focal]{Sebastien Corner}
\author[rvt]{Corina Sandu}
\ead{csandu@vt.edu, 
107 Randolph Hall – 0710
460 Old Turner Street
Blacksburg, VA 24061}
\author[focal]{Adrian Sandu}

\address[rvt]{Advanced Vehicle Dynamics Laboratory,
Department of Mechanical Engineering,
Virginia Tech,
Blacksburg, VA 24061}
\address[focal]{Computational Science Laboratory,
Department of Computer Science,
Virginia Tech,
Blacksburg, VA 24061}
\begin{abstract}
This paper provides  an analytical methodology to compute the sensitivities with respect to system parameters for any second order hybrid Ordinary Differential Equation (ODE) system. The hybrid ODE system is characterized by discontinuities in the velocity state variables due to an impulsive jump caused by an instantaneous impact in the motion of the system. The analytical methodology that solves this problem is structured based on jumping conditions for both the state variables and the sensitivities matrix. The proposed analytical approach is of the benchmarked against a numerical method.
\end{abstract}
\begin{keyword}
Direct sensitivity analysis \sep Switching sensitivities \sep Constrained systems \sep Impulsive systems
\end{keyword}
\end{frontmatter}



\doublespacing
\section{Introduction}
Sensitivity analysis plays a key role in a wide range of computational engineering problems such as design optimization, optimal control, and implicit time integration methods, by providing derivative information for gradient based algorithms and methods. Sensitivity analysis quantifies the effect of small changes in the system parameters onto the outputs of interest ~\cite{Sandu2015dynamic}. Specifically, in the design of mechanical systems sensitivity analysis reveals the system parameters that affect the given performance criterion the most, thus providing directions for mechanical design improvements. Sensitivity analysis enables gradient-based optimization by providing the derivative of the cost function with respect to design variables.
In adaptive control systems sensitivity analysis allows to assess  stability by accounting for the effects of system disturbances and system parameters inaccuracies. 

The most widely used techniques for sensitivity analysis are the direct and the adjoint methods.  These approaches are complementary, as the direct sensitivity provides information on how parametric uncertainties propagate through the system dynamics, while the adjoint method is suitable for inverse modeling, in the sense that it can be used to identify the origin of uncertainty in a given model output ~\cite{sandu2003direct}.

Numerical approximations of sensitivities are often calculated by finite difference methods where the deviation of the state trajectories are evaluated after system parameters disturbances or variations in the initial conditions are added to the system. Because of the simplicity of this method, which does not require any additional inputs other than the provided model, this technique is broadly used. However, the accuracy of the results is severely limited by the perturbation size, and by the roundoff and cancellation errors ~\cite{chang2016design}.

This paper develops a general and unify formulation for {\it direct} sensitivity analysis for hybrid dynamical systems.
In the context of this study, the term hybrid refers to a continuous system that encounters a finite number of events where some of the state variables jump to different values; the dynamics of a hybrid system is piecewise-smooth in time. Sensitivity analysis for hybrid systems has been an active topic of research ~\cite{Barton1998,Barton1999, barton2002modeling, tolsma2002hidden, rozenvasser1967general, saccon2014sensitivity, hiskens2006sensitivity, hiskens2000trajectory, taringoo2010geometry}.

We are especially interested in hybrid mechanical systems where sensitivity analysis involves the time, the position coordinates, the velocity coordinates, and the system parameters. We treat unconstrained systems modeled by ODEs, as well as constrained multibody systems modeled by differential algebraic equations (DAEs) and ODE penalty formulation ~\cite{Sandu2015dynamic,kurdila1993sufficient}. The sensitivity of the time of event, and the jumps in the sensitivities of the state variables at the time of event, are available in the literature. Here a new graphical proof of the jumps in sensitivities at the time of an event is employed, which helps to better understand the conditions for the jump in the sensitivities.  This paper provides a unified methodology for determining the system solutions, their sensitivities, and sensitivities of a cost function for different types of events.  The first type of event is caused by an external impulse (e.g., a contact) leading to a sudden change of velocities. The second type of event is caused by a sudden change of the equations of motion.  The third type of event is caused by  a sudden change in the kinematic constraints.  

The paper is organized as follows. A review of the direct sensitivity approach for smooth ODE systems, along with the quadrature variable of the running cost function, is introduced in Section \ref{sec:Direct sensitivity analysis for smooth ODE systems}.  The  extension of this approach to hybrid ODE systems is presented in Section \ref{sec:direct-sensitivity-hybrid}. The sensitivity analysis for smooth constrained rigid multibody dynamic systems is reviewed in Section \ref{sec:multibody-smooth}.  Sensitivity analysis methodology for hybrid constrained systems is developed in Section \ref{sec:multibody-constrained}.
Sensitivity analysis methodology for constrained systems with a sudden change of the of equation of motions is developed in Section \ref{sec:Direct sensitivity analysis with transition functions}. The proposed  sensitivity analysis methodologies in Section \ref{sec:multibody-constrained} are applied to a five-bar mechanism  in Section \ref{sec:numerics}. Conclusions are drawn in Section \ref{sec:conclusions}.

\section{Direct sensitivity analysis for smooth ODE systems}
\label{sec:Direct sensitivity analysis for smooth ODE systems}

We start the discussion with a review of direct sensitivity analysis for dynamical systems governed by smooth ODEs.
\subsection{Smooth ODE system dynamics}

In this study we consider second order systems of ordinary differential equations of the form:
\begin{equation}
\label{eq:EOM}
{\Mass}\left(t,\bq,\brho\right) \cdot \ddbq ={\Force} \left(t,\bq,\dbq,\brho\right),
\quad t_0 \leq t \leq  t_F, \quad \bq(t_0)=\bq_0(\brho),\quad\dbq(t_0)=\dbq_0(\brho),
\end{equation}
or equivalently:
\begin{equation}
\label{eq:EOM-ODE}
\ddbq ={\Mass}^{-1} \left(t,\bq,\brho\right)  \cdot {\Force}\left(t,\bq,\dbq,\brho\right)
=: \feom \left(t,\bq,\dbq,\brho\right),
\end{equation}
that arise from the description of the dynamics of mechanical systems. In \eqref{eq:EOM-ODE} $t \in \mathds{R}$ is time, $\bq \in \mathds{R}^{n}$ is the generalized position vector and $\dbq \in \mathds{R}^{n}$ is the generalized velocity vector, $n$ is the dimension of generalized coordinates, and $\brho \in \mathds{R}^{p} $ is the vector of system parameters, where $p$ is the number of parameters. The dot notation ($\dot{\square}$ or $\ddot{\square}$) indicates the total (first or second order) derivative of a function or variable with respect to time. Subscripts indicate partial derivative with respect to a quantity, unless stated otherwise. The mass matrix $\Mass:\mathds{R} \times \mathds{R}^{n} \times \mathds{R}^{p} \rightarrow \mathds{R}^{n \times n}$ is assumed to be smooth with respect to all its arguments, invertible, and with an inverse $\Mass^{-1}$ that is also smooth with respect to all arguments. The right hand side function $\Force:\mathds{R} \times \mathds{R}^{n} \times \mathds{R}^{n} \times \mathds{R}^{p} \rightarrow \mathds{R}^{n}$ represents external and internal generalized forces and is assumed to be smooth with respect to all its arguments.

The state trajectories are obtained by integrating the equations of motion \eqref{eq:EOM-ODE}, which depend on the system parameters $\brho$. Consequently, the state trajectories (the solutions of the equations of motion) depend implicitly on time and on the parameters, $\bq=\bq(t, \brho)$ and $\dbq=\dbq(t, \brho)$. The state trajectories also depend implicitly on the initial conditions of \eqref{eq:EOM-ODE}. For clarity we denote the velocity state variables by $\bv=\dbq \in \mathds{R}^{n}$.

Sensitivity analysis computes derivatives of the solutions of \eqref{eq:EOM-ODE} with respect to  the system parameters:
\begin{equation}
\label{eq:sensitivity-of-solutions}
\bQ(t,\brho) := \Drho\bq(t) := \frac{d \bq}{d \brho}(t, \brho) \in \mathds{R}^{n \times p}, 
\quad \bV(t,\brho) := \Drho\bv(t) = \frac{d \bv}{d \brho}(t, \brho) \equiv \frac{d \dbq}{d \brho}(t, \brho)=\dot{\bQ}(t,\brho)  \in \mathds{R}^{n \times p}.
\end{equation}

The second order ODE~\eqref{eq:EOM-ODE} can transformed into a first order reduced system as follows. With the velocity state variables $\bv:=\dbq \in \mathds{R}^{n}$ the system \eqref{eq:EOM-ODE} can be written in the form:
\begin{equation}
\label{eq:EOM-ODE-first-order}
\begin{bmatrix}
\bI & \bzero \\
\bzero & \Mass(t,\bq,\brho)
\end{bmatrix}
\begin{bmatrix}
 \dbq \\
 \dbv
\end{bmatrix}  = 
\begin{bmatrix}
 \bv \\
 \Force(t,\bq,\bv,\brho)
\end{bmatrix}
\quad \Leftrightarrow \quad
\begin{bmatrix}
 \dbq \\
 \dbv
\end{bmatrix}
=
\begin{bmatrix}
 \bv \\
\feom \left(t,\bq,\bv,\brho\right)
\end{bmatrix},
\qquad
\begin{bmatrix}
 \bq(t_0) \\
 \bv(t_0)
\end{bmatrix}
=
\begin{bmatrix}
 \bq_0(\brho) \\
 \bv_0(\brho)
\end{bmatrix}.
\end{equation}
%
%
\begin{definition}[Cost function] 
\label{def:CostFunction}
Consider a smooth `trajectory cost function' $\bg:\mathds{R}^{1+3n+p} \rightarrow \mathds{R}$ and a smooth `terminal cost function' $w:\mathds{R} \times \mathds{R}^{1+2n+p} \rightarrow \mathds{R}$. 
A general cost function is defined as the sum of the costs along the trajectory plus the cost at the terminal point of the solution:
\begin{eqnarray}
\label{eq:CostFunction}
&&\psi=  \int_{t_0}^{t_F} {\g \  dt} + \w.
\end{eqnarray}
\end{definition}
\begin{remark}[Accelerations in the cost function]
\label{rem:Accelerations}
Note that the trajectory cost function \eqref{eq:CostFunction} includes accelerations via $\dbv$. Accelerations are not independent variables and they can be resolved in terms of positions and velocities:
\[
g\left(\,t,\,\bq,\,\bv,\,\dbv,\,\brho\,\right) = g\bigl(\,t,\,\bq,\,\bv,\,\feom(t,\bq,\bv,\brho),\,\brho\,\bigr) = \tilde{g}\bigl(\,t,\,\bq,\,\bv,\,\brho\,\bigr).
\] 
We prefer to keep accelerations as an explicit argument in the cost function \eqref{eq:CostFunction} in order to give additional flexibility in practical applications. However, we will need to resolve the sensitivities of acceleration in terms of other sensitivities in subsequent calculations.
\end{remark}

To further simplify the notation we define the `quadrature' variable $z(t) \in \mathds{R}$ as follows:
\begin{equation} 
\label{eqn:quadrature-variable}
\begin{split}
\bz(t,\brho) &:=  \int_{t_0}^{t} { \tilde{g}\bigl(t,\bq(t,\brho),\bv(t,\brho),\brho\bigr) \,  dt} \quad \Leftrightarrow \quad \\
\dbz(t,\brho) &=  \g = \tilde{g}\bigl(t,\bq(t,\brho),\bv(t,\brho),\brho\bigr),  \quad t_0 \leq t \leq  t_F, \quad z(t_0,\brho)=0.
\end{split}
\end{equation}
The cost function \eqref{eq:CostFunction} reads:
\begin{equation}
\label{eqn:quadrature-cost-function}
\psi =\bz(t_F) + \w.
\end{equation}
\begin{definition}[The canonical ODE system] 
\label{def:canonical-ode}
The canonical  system is obtained by combining the first order ODE dynamics \eqref{eq:EOM-ODE-first-order}  with equation \eqref{eqn:quadrature-variable} for the `quadrature' variable':
%
\begin{subequations}
\label{eq:canonical-ode}
\begin{eqnarray}
\label{eq:canonical-ode-system}
\bx(t) := \begin{bmatrix}  {\bq}(t) \\ {\bv}(t)  \\ \bz(t)   \end{bmatrix}  \in \mathds{R}^{2 n+1}; 
\qquad
\dot{\bx} 
= \begin{bmatrix} \dbv \\ \feom\bigl(t,\bq(t,\brho),\bv(t,\brho),\brho\bigr)  \\ \tilde{g}\fin \end{bmatrix} = F(t,\bx,\brho), \quad t_0 \le t \le t_F,
\quad
\bx(t_0,\brho) = \begin{bmatrix} \bq_0(\brho) \\ \bv_0(\brho) \\ 0 \end{bmatrix}.
\end{eqnarray}
The canonical cost function \eqref{eqn:quadrature-cost-function} is purely a terminal cost function:
\begin{equation}
\label{eqn:canonical-cost-function}
\psi =\bz(t_F,\brho) + \w = W\bigl( x(t_F,\brho), \brho \bigr).
\end{equation}
\end{subequations}
\end{definition}
%

\subsection{Direct sensitivity approach for smooth ODE systems}
\label{sec:direct-sensitivity-continuous}
%

\begin{definition}[The sensitivity analysis problem] 
\label{def:Sensitivity}
Our goal in this work is to perform a sensitivity analysis of the cost function, i.e., to compute the total derivative of the cost function \eqref{eq:CostFunction} with respect to model parameters $\brho$:
\begin{equation}
\label{eq:gradient-cost-function}
\Drho \psi = \frac{d\, \psi}{d\, \brho}  \in \mathds{R}^{1 \times p}.
\end{equation}
Note that the cost function \eqref{eq:CostFunction} depends on the system parameters $\brho$ directly (through the direct dependency of $g$ and $w$ on $\brho$) as well as indirectly (through the dependency of $\bq$ and $\bv$ on $\brho$). The sensitivity \eqref{eq:gradient-cost-function} needs to account for all the direct and the indirect dependencies. 
\end{definition}

\begin{remark}[The direct sensitivity analysis approach] 
In order to compute the sensitivities \eqref{eq:gradient-cost-function} we take a variational calculus approach \cite{Sandu_2014_sensitivity_ODE_multibody,Sandu2015dynamic,Sandu_2017_vehicle-optimization}.  Infinitesimal changes in parameters 
\[
\brho \to \brho + \delta\brho \in \Re^p,
\]
lead to a total change in the cost function as follows:
\[
 \psi \to \psi + \delta \psi, \quad
\delta\psi = \sum_{i=1}^p \frac{d \psi}{d \brho_i} \cdot \delta\brho_i =
\Drho \psi \cdot \delta\brho. 
\] 
The direct sensitivity analysis computes each element $(\Drho \psi)_i = \partial \psi/\partial \brho_i$ of the derivative \eqref{eq:gradient-cost-function} by accounting for changes in the cost function that result from changing each individual parameter $\delta\brho_i$, $i=1 \dots p$.  
\end{remark}

\subsection{Direct sensitivity analysis with respect to system parameters solved analytically}
\begin{definition}[The tangent linear model (TLM)] Consider the `position sensitivity' matrix $Q(t,\brho)$ and the `velocity sensitivity' matrix  $V(t,\brho)$ defined in \eqref{eq:sensitivity-of-solutions}:
\begin{subequations}
\label{eq:sensitivity-matrices}
\begin{eqnarray}
\label{eq:sensitivity-matrix-Q}
\bQ_i(t,\brho) &:= \frac{d\, \bq(t,\brho)}{d\, \brho_i} \in \mathds{R}^{n}, ~~ i=1,\dots,p; \quad
\bQ(t,\brho) := \begin{bmatrix} \bQ_1(t,\brho) \cdots \bQ_p(t,\brho) \end{bmatrix} \in \mathds{R}^{n \times p}, \\
\label{eq:sensitivity-matrix-V}
\bV_i(t,\brho) &:= \frac{d\, \bv(t,\brho)}{d\, \brho_i} \in \mathds{R}^{n}, ~~ i=1,\dots,p; \quad
\bV(t,\brho) := \begin{bmatrix} \bV_1(t,\brho) \cdots \bV_p(t,\brho) \end{bmatrix} \in \mathds{R}^{n \times p}.
\end{eqnarray}
\end{subequations}
These sensitivities evolve in time according to the {\it tangent linear model} (TLM) equations \cite{Sandu_2014_sensitivity_ODE_multibody,Sandu2015dynamic,Sandu_2017_vehicle-optimization}, obtained by differentiating the equations of motion \eqref{eq:EOM-ODE} with respect to the parameters:
\begin{subequations}
\label{eq:TLM}
\begin{equation}
\begin{cases}
\dot{\bQ}_i &\displaystyle = \frac{d\dbq}{d \brho_i} = \frac{d\bv}{d \brho_i} = \bV_i, \\[8pt]
\dot{\bV}_i &\displaystyle =  \frac{d\dbv}{d \brho_i} = \feomq \left(t,\bq,\bv,\brho\right) \cdot \frac{d\bq}{d \brho_i}
+ \feomv \left(t,\bq,\bv,\brho\right) \cdot \frac{d\bv}{d \brho_i}
+ \feomrhoi \left(t,\bq,\bv,\brho\right) \\
&=  \feomq \left(t,\bq,\bv,\brho\right) \cdot \bQ_i
+ \feomv \left(t,\bq,\bv,\brho\right) \cdot \bV_i
+ \feomrhoi \left(t,\bq,\bv,\brho\right),
\end{cases}
\qquad i=1,\dots,p,
\quad t_0 \le t \le t_F,
\end{equation}
with the initial conditions
\begin{equation}
\label{eq:TLM-IC}
Q_i(t_0,\brho) = \frac{d\bq_0}{d \brho_i}, \quad
V_i(t_0,\brho) = \frac{d\bv_0}{d \brho_i}, \quad
i=1,\dots,p.
\end{equation}
\end{subequations}
The expressions $\feomq$, $\feomv$, and $\feomrhoi$ denote the partial derivatives of $\feom$ with respect to the subscripted variables.
\end{definition}

\begin{remark}
The partial derivatives $\partial \feom/\partial \zeta$ are obtained by differentiating  \eqref{eq:EOM-ODE} with respect to $\zeta \in \{ \bq,\bv,\brho \}$:
\begin{equation}
\label{eq:EOM-ODE}
\frac{\partial \feom}{\partial \zeta} 
=\frac{\partial (\Mass^{-1}  \, \Force)}{\partial \zeta} 
= -\Mass^{-1} \Mass_\zeta \Mass^{-1}\,\Force
+ \Mass^{-1}\,\Force_\zeta 
= \Mass^{-1}\,\left( \Force_\zeta - \Mass_\zeta\,\feom \right)
= \Mass^{-1}\,\left( \Force_\zeta - \Mass_\zeta\,\dbv \right).
\end{equation}
%
\end{remark}

\begin{definition}[The quadrature sensitivity] 
\label{The quadrature sensitivity}
Similarly, let the `quadrature sensitivity' vector $Z(t,\brho)$ be the Jacobian of the `quadrature' variable $z(t,\brho)$ ~\eqref{eqn:quadrature-variable} with respect to the parameters $\brho$:
\[
Z_i(t,\brho) := \frac{\partial z(t,\brho)}{\partial \brho_i}, ~~ i=1,\dots,p; \quad
Z(t,\brho) := \nabla_\brho z(t,\brho) = \begin{bmatrix} Z_1(t,\brho) \cdots Z_p(t,\brho) \end{bmatrix} \in \mathds{R}^{1 \times p}.
\]
The time evolution equations of the quadrature sensitivities are given by the TLM obtained by differentiating \eqref{eqn:quadrature-variable} with respect to the parameters:
\begin{equation}
\label{eq:TLM-quadrature0}
\begin{split}
\dbZ_i  =  \frac{d\, g\fincost}{d\,\brho_i} &= g_\bq\cdot \bQ_{i} +  g_\bv\cdot \bV_{i}  + g_{\dbv}\cdot\dot{\bV}_{i} + g_{\brho_i} \\
&= \big(g_\bq +  g_\dbv\,\feomq\big)\cdot \bQ_{i}  
+  \big(g_\bv +  g_\dbv\,\feomv\big)\cdot \bV_{i}+ g_{\brho_i} +  g_\dbv\cdot\feomrhoi, \\
& \quad t_0 \le t \le t_F, \quad  \bZ_i(t_0,\brho) = 0, \quad i=1,\dots,p.
\end{split}
\end{equation}
\end{definition}
\begin{definition} [Canonical sensitivity ODE]
\label{def:Canonical sensitivity ODE}
The solutions given by Eq.~\eqref{eq:canonical-ode-system}, the TLM given by Eq.~\eqref{eq:TLM}, and the sensitivity quadrature equations \eqref{eq:TLM-quadrature} need to be solved together forward in time, leading to the canonical sensitivity ODE that computes the derivatives of the cost function with respect to the system parameters $\brho$ for smooth systems:
\begin{eqnarray}
\label{eq:canonical-ode-sensitivity}
\begin{bmatrix}
\dbq \\ \dbv \\ \dbz \\  \bigl[\dot{\bQ}_i \bigr]_{i=1,\dots,p}\\
 \bigl[\dot{\bV}_i \bigr]_{i=1,\dots,p}\\  \bigl[\dbZ_i\bigr]_{i=1,\dots,p}
\end{bmatrix} 
=
\begin{bmatrix}
\bv \\
\feom\\
 \tilde{\bg}\\
 \bigl[\bV_i \bigr]_{i=1,\dots,p}\\
 \bigl[\feomq \bQ_i +
\feomv \bV_i +
\feomrhoi \bigr]_{i=1,\dots,p} \\
 \bigl[\big(g_\bq +  g_\dbv\,\feomq\big)\cdot \bQ_{i}  
+  \big(g_\bv +  g_\dbv\,\feomv\big)\cdot \bV_{i}+ g_{\brho_i} +  g_\dbv\cdot\feomrhoi \bigr]_{i=1,\dots,p}
\end{bmatrix},
\end{eqnarray}
where the state vector of the canonical sensitivity ODE is :
\begin{eqnarray}
\label{eq:canonical-sensitivity-state}
\bX = \left[   \, \bq^{\rm T}, \, \bv^{\rm T}, \, \bz, ~
\bQ_1^{\rm T},  \dots , \bQ_p^{\rm T}, \,
\bV_1^{\rm T},  \dots , \bV_p^{\rm T}, \,
\bZ_1, \dots, \bZ_p
\right] ^{\rm T}  \in \mathds{R}^{(n+1)(p+1)}.
\end{eqnarray}
\end{definition}

\begin{remark}  [The sensitivities of the cost function]
\label{rem:sensitivities-of-the-cost-function}
Once the quadrature sensitivities \eqref{eq:TLM-quadrature0} have been calculated, the sensitivities of the cost function with respect to each parameter are computed as follows:
\[
\begin{split}
\frac{d\,\psi}{d\, \brho_i} &= \bZ_i(t_F) + w_\bq\bigl(\,t_F, \,\bq(t_F,\brho),\, \bv(t_F,\brho),\, \brho\,\bigr)\cdot Q_i(t_F,\brho) 
+ w_\bv\bigl(\,t_F, \,\bq(t_F,\brho),\, \bv(t_F,\brho),\, \brho\,\bigr)\cdot V_i(t_F,\brho) \\
& \qquad + w_{\brho_i}\bigl(\,t_F, \,\bq(t_F,\brho),\, \bv(t_F,\brho),\, \brho\,\bigr),  \qquad i=1,\dots,p. 
\end{split}
\]
\end{remark}

\subsection{Direct sensitivity analysis with respect to system parameters solved with the complex finite difference method}
\label{complex fdf}
An accurate numerical method for sensitivity analysis  of a smooth ODE system with respect to the system parameters $\brho$ is the complex finite difference method \cite{Sandu_2014_sensitivity_ODE_multibody,Sandu2015dynamic,Sandu_2017_vehicle-optimization}. Add a small {\it complex} perturbation to one parameter:
\[
\tilde{\brho}_j = \begin{cases}
\brho_j  & \textnormal{for}~~j \ne \ell, \\
\brho_\ell + \mathfrak{i}\,\Delta\brho& \textnormal{for}~~j = \ell,
\end{cases}
\quad j=1,\dots,p,
\]
and solve the canonical ODE system \eqref{eq:canonical-ode-system} for this perturbed values of the parameters to obtain:
\[
\bq(t,\tilde{\brho}), \quad \bv(t,\tilde{\brho}), \quad \bz(t,\tilde{\brho}), 
\quad \psi(\tilde{\brho})=\bz(t_F,\tilde{\brho}) + w \bigl(\, \bq(t_F,\tilde{\brho}),\, \bv(t_F,\tilde{\brho}),\,\tilde{\brho}\, \bigr).
\]
The sensitivities are approximated numerically by the imaginary parts of the state variables:
\begin{equation}
\label{eq:ODE_sensitivity_complex0}
\bQ_\ell(t,\brho)  \approx - \frac{\operatorname{imag}\bigl({\bq(t,\tilde{\brho})}\bigr)}{\Vert{\Delta \brho}\Vert}, \quad
\bV_\ell(t,\brho)  \approx - \frac{\operatorname{imag} \bigl(\bv(t,\tilde{\brho}) \bigr)}{\Vert{\Delta \brho}\Vert}, \quad
\frac{d\, \psi}{d\, \brho_\ell}  \approx - \frac{\operatorname{imag} \bigl(\psi(\tilde{\brho}) \bigr)}{\Vert{\Delta \brho}\Vert}.
\end{equation}

We next discuss an approach to sensitivity analysis that accounts for discontinuities in the state variables.

\section{Direct sensitivity analysis for hybrid ODE systems}
\label{sec:direct-sensitivity-hybrid}
%
\subsection{Hybrid ODE systems}
\begin{definition}[Hybrid dynamics] 
\label{def:time-of-invent}
A hybrid mechanical system is  a piecewise-in-time continuous dynamic ODE described by \eqref{eq:EOM-ODE} that exhibits discontinuous dynamic behavior in the generalized velocity state vector at a finite number of time moments (no zeno phenomena \cite{Pace2017}).  Each such moment is a `time of event' $\timp$ and corresponds to a triggering event described by the equation:
\begin{equation}
\label{eq:event_function}
r \big( \qtimp \big) = 0,
\end{equation} 
where $r : \mathds{R}^{n} \to \mathds{R}$ is a smooth `event function'.  
\end{definition}

\begin{remark}
In the context of this paper, we assume that there are no grazing phenomena where the system trajectory would make tangential contact with an the event triggering hypersurface \cite{Pace2017}.
\end{remark}

\begin{definition}[Characterization of an event]
\label{def:characterize-invent} 
For hybrid systems variables can change values during the event. For this reason we distinguish between the value of a variable right before the event $\boldsymbol{x}\beforeimp$, and its value right after the event $\boldsymbol{x}\beforeimp$:
\[
\boldsymbol{x}\beforeimp := \lim_{\varepsilon>0,~\varepsilon\to 0}\,\boldsymbol{x}(\timp-\varepsilon), \quad
\boldsymbol{x}\afterimp := \lim_{\varepsilon>0,~\varepsilon\to 0}\,\boldsymbol{x}(\timp+\varepsilon).
\]
The limits exist since the evolution of the system is smooth in time both before and after the event.

We consider an event happening at time $\timp$ that applies a finite energy impulse force to the system. Such an impulse force does not change the generalized position state variables, and therefore:
\begin{equation}
\label{eq:jump-in-position}
\qplus = \qminus = \bq\atimp.
\end{equation}
However, the finite energy event can abruptly change the generalized velocity state vector $\dbq$ from its value $\vminus$ right before the event to a new value $\vplus $ right after the event.  The `jump function' at the time of event $\timp$ characterizes the change in the generalized velocity during the event: 
\begin{equation}
\label{eq:jump-in-velocity}
\vplus=h \Big({\timp},\, \bq\atimp,\,\vminus,\,{\brho} \Big)  \quad \Leftrightarrow \quad  \dbq \afterimp=h \Big({\timp},\, \bq\atimp,\,\dbq  \beforeimp,\,{\brho} \Big) .
\end{equation}
\end{definition} 

\begin{remark}[Multiple events]
In many cases the change can be triggered by one of multiple events. Each individual event is described by the event function $r_\ell : \mathds{R}^n \to \mathds{R}$, $\ell=1,\dots,e$. The detection of the next event, which can be one of the possible $e$ options, is described by:
\[
r_1 \big(  \qtimp \big) \cdot r_2 \big( \qtimp \big)~ \dots ~ r_e \big(  \qtimp\big)= 0,
\]
and if event $\ell$ takes place then $r_\ell = 0$ and the corresponding jump is:
\[
\vplus = h_\ell \Big(\qtimp,\,\vminus\Big).
\]
\end{remark}

\begin{remark}[Numerical implementation of invents]
Numerical solutions of hybrid systems use an event detection mechanism. The event function \eqref{eq:event_function} is implemented in the numerical time solver such that the integrator is stopped at the solution of \eqref{eq:event_function}. The jump function \eqref{eq:jump-in-velocity} is implemented as a callback function that is executed after the event is detected. The numerical integration resumes with new initial conditions after the jump.
\end{remark}

\begin{definition}[Twin perturbed systems]
\label{def:perturbed-twins}
Consider two versions of the system \eqref{eq:EOM-ODE} with identical dynamics and initial conditions, but with different parameters values $\brho_1$ and $\brho_2$, respectively. Without loss of generality in this proof we consider the scalar parameter case $p=1$; the general equation \eqref{eq:jump-position-sensitivity} can be proven element by element by considering sensitivities with respect to individual parameters. The two parameters are infinitesimally small perturbations $\delta\brho$ of the reference parameter value $\brho$:
\[
\brho_1 = \brho - \frac{\delta\brho}{2}; \qquad  \brho_2 = \brho + \frac{\delta\brho}{2}. 
\]
We denote by $\bq_1(t) = \bq(t,\brho_1)$, $\bv_1(t) = \bv(t,\brho_1)$, and $\bz_1(t) = \bz(t,\brho_1)$ the position and velocity states, and the quadrature variable of the first system, respectively. We denote by $\bq_2(t) = \bq(t,\brho_2)$, $\bv_2(t) = \bv(t,\brho_2)$ , and $\bz_2(t) = \bz(t,\brho_2)$ the position and velocity states, and the quadrature variable of the second system, respectively.

Assume that the sign of the perturbation $\delta\brho$ is such that $\tau_{2} > \timp > \tau_{1}$, and denote $\delta\tau = \tau_{2} - \tau_{1}$. Since $\delta\brho$ is infinitesimally small, so is $\delta\tau$. The trajectories of the positions $\bq(t)$, $ \bq_1(t)$, and $\bq_2(t)$, as well as the trajectories of the velocities $\bv(t)$, $\bv_1(t)$, and $ \bv_2(t)$, are schematically illustrated in Fig.~\ref{fig:im1} and Fig.~\ref{fig:im2}, respectively. As shown in Fig.~\ref{fig:im1} the first system meets the event described by the function \eqref{eq:event_function} at the time of event $\timp(\brho_1) = \tau_{1}$, when its position state is $\qoneTauone$. The second system meets the event at time $\timp(\brho_2) =\tau_{2}$, when its position state is $\qtwoTautwo$. Note that in the limit of vanishing $\delta\brho$ we have:
\begin{equation}
\label{eqn:twin-system-limits}
\delta\brho \to 0 \quad \Rightarrow \quad
\voneTauoneMinus \to \vminus, ~~
\voneTautwo \to \vplus, ~~
\voneTauonePlus \to \vplus; \quad
\vtwoTauone\to \vminus, ~~  
\vtwoTautwoMinus \to \vminus, ~~
\vtwoTautwoPlus \to \vplus.
\end{equation}
\end{definition}

Let $\bQ\afterimp,\bQ\beforeimp \in \mathds{R}^{n{\times}p}$  be the sensitivities of the generalized positions \eqref{eq:sensitivity-matrix-Q} right before and right after the event, respectively. Let $\bV\afterimp,\bV\beforeimp \in \mathds{R}^{n{\times}p}$  be the sensitivities of the generalized velocities \eqref{eq:sensitivity-matrix-V} right before and right after the event, respectively. Our methodology to find these sensitivities is to first evaluate the states $\bq_1(t),\bv_1(t)$ and $\bq_2(t),\bv_2(t)$ of each of the twin perturbed systems at both $\tau_1$ and $\tau_2$. The sensitivities are obtained from their definition by taking differences of states of the two systems, dividing them by the perturbation in parameters, and taking the limits, for example:
\[
\begin{split} 
Q_i\beforeimp = \lim_{\delta\brho_i \to 0}\, \frac{\qtwoTauone - \qoneTauone}{ \delta\brho_i},\quad
V_i\afterimp = \lim_{\delta\brho_i \to 0}\,  \frac{\vtwoTautwoPlus - \voneTautwo}{ \delta\brho_i}, 
\quad i=1,\dots,p.
\end{split} 
\]
\begin{figure} [H]
	\begin{center}
	\includegraphics[width=80mm]{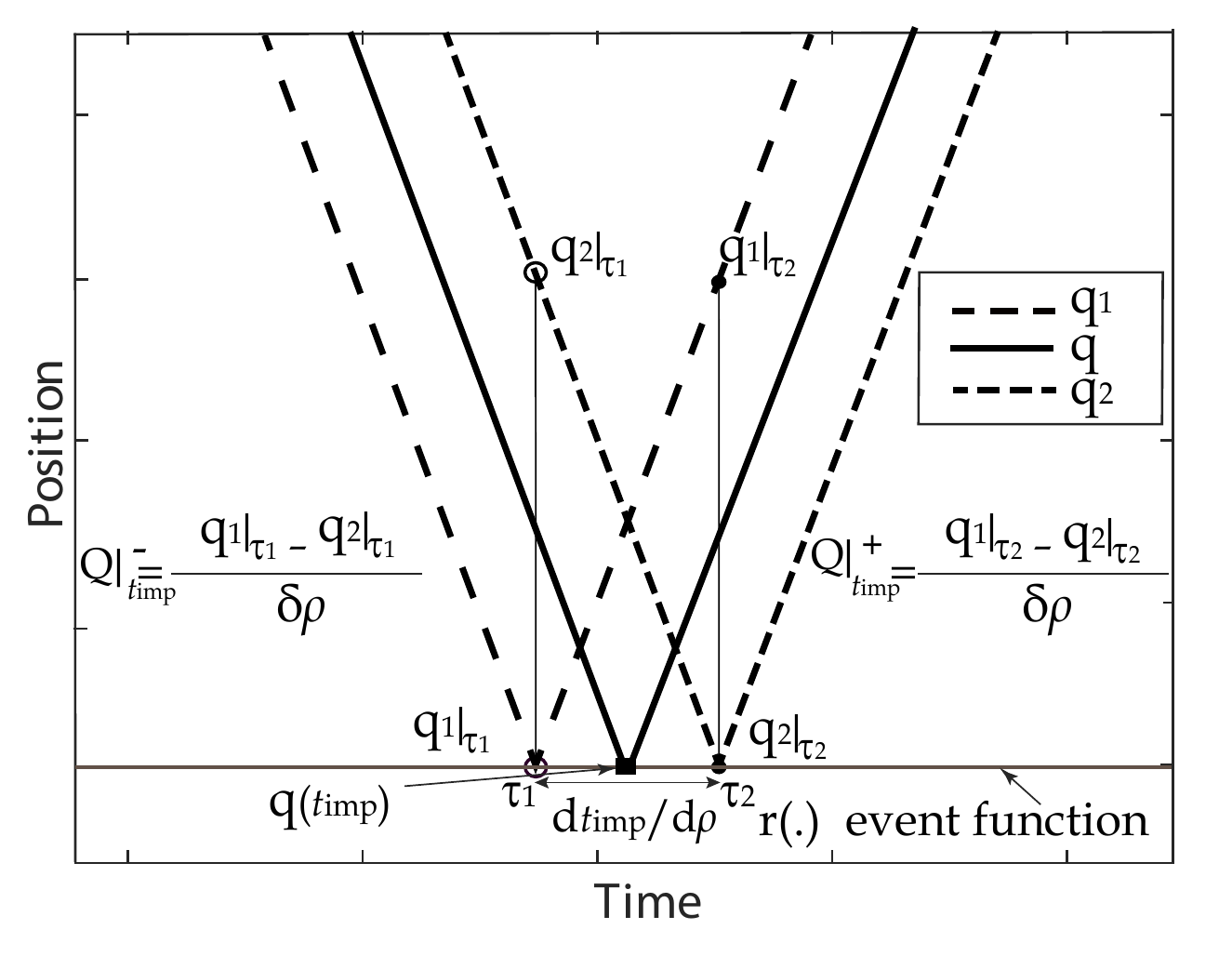}
	\end{center}
	\caption{Schematic visualization of the jump in the sensitivity of the position.}
	\label{fig:im1}
\end{figure}
\begin{figure} [H]
	\begin{center}
	\includegraphics[width=80mm]{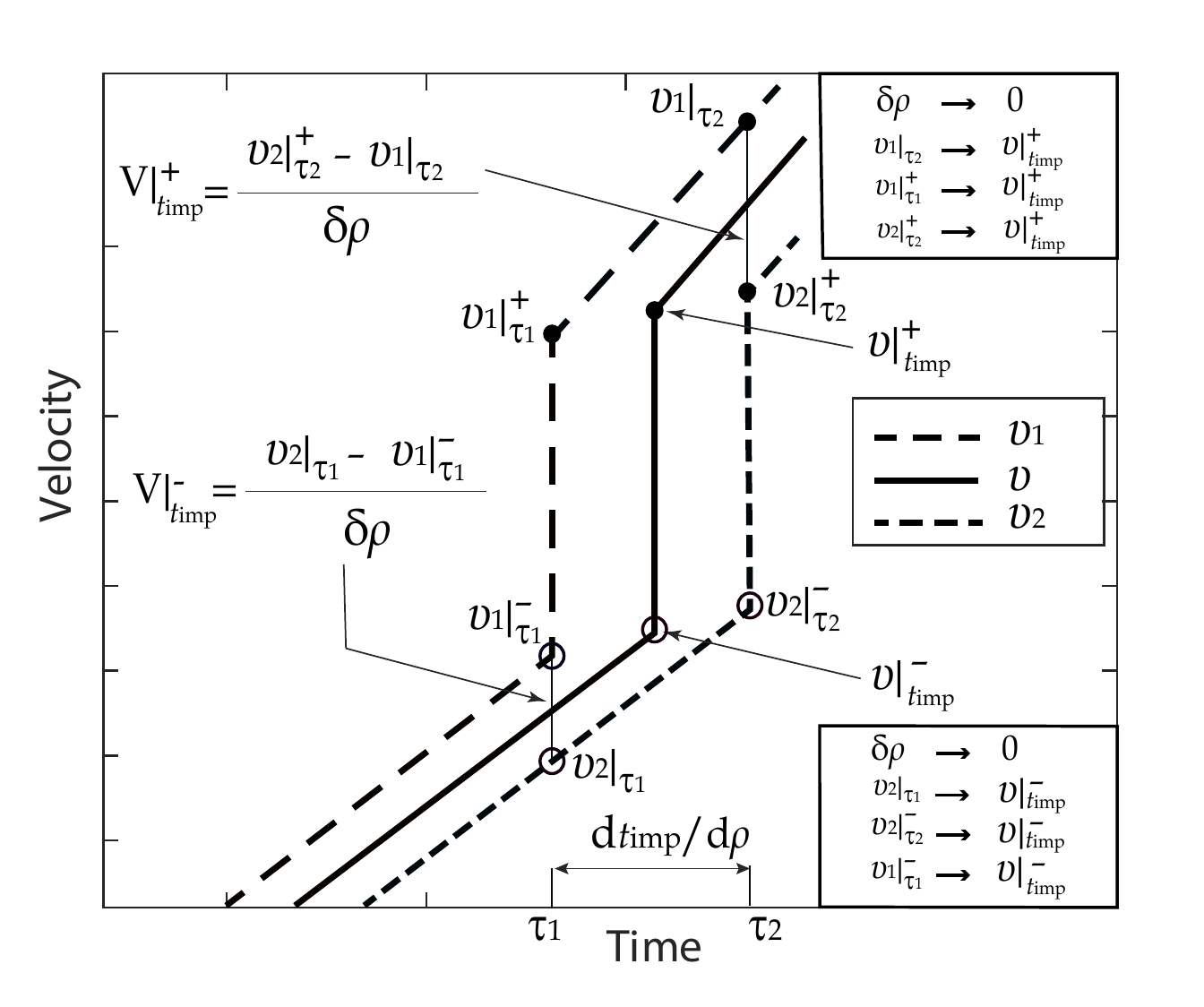}
	\end{center}
	\caption{Schematic visualization of the jump in the sensitivity of the velocity. }
	\label{fig:im2}
\end{figure}

\subsection{The sensitivity of the time of event with respect to the system parameters }
%

\begin{theorem}[Sensitivity of the time of event \cite{Donze07systematicsimulation, backer1966jump, hiskens2000trajectory,Barton1999,Barton1998,rozenvasser1967general}] 
\label{thm:sensitivity-time-of-invent}
Let $r(\cdot) \in \mathds{R}$ be the scalar event function defined by Eq.~\eqref{eq:event_function}, and $dr/d\bq \in \mathds{R}^{1 \times n }$ be its Jacobian. The sensitivity of the time of event with respect to the system parameters is:
\begin{eqnarray}
\label{eq:sensitivity-time-of-invent}
\frdtdrho=
- \, \dfrac{\displaystyle \frac{d r}{d\bq}\left(\qtimp\right)  \cdot \bQ\beforeimp}{\displaystyle \frac{d r}{d\bq}\left(\qtimp\right)  \cdot  \vminus }   
\in \mathds{R}^{1 \times p}.
\end{eqnarray}
\end{theorem}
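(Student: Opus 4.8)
The plan is to derive \eqref{eq:sensitivity-time-of-invent} by implicit differentiation of the event condition. By Definition~\ref{def:time-of-invent}, the time of event $\timp=\timp(\brho)$ is defined implicitly as the instant at which the trajectory reaches the event surface, so that the identity
\[
r\bigl(\bq(\timp(\brho),\brho)\bigr) = 0
\]
holds for all admissible $\brho$ in a neighborhood of the reference value. Since the trajectory is smooth up to the event and the event function $r$ is smooth, the left-hand side is a differentiable function of $\brho$, and the strategy is to differentiate this identity and solve for $\frdtdrho$.

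First I would apply the chain rule, keeping track of the two ways in which $\brho$ enters the composed map: directly, through the explicit parameter dependence of the state $\bq(t,\brho)$, and indirectly, through the dependence of the event time $\timp(\brho)$ on the parameters. Writing $\Drho$ for the total derivative with respect to $\brho$, this yields
\[
\frac{dr}{d\bq}\bigl(\qtimp\bigr) \cdot \Bigl[\, \dbq\atimp \cdot \frdtdrho + \bQ\atimp \,\Bigr] = \bzero,
\]
where $\dbq\atimp$ is the total time derivative of the position evaluated at the event and $\bQ\atimp = \Drho\bq$ is the position sensitivity \eqref{eq:sensitivity-matrix-Q} evaluated at the event. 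Dimensionally, $\frac{dr}{d\bq}\in\mathds{R}^{1\times n}$, $\dbq\atimp\cdot\frdtdrho\in\mathds{R}^{n\times p}$, and $\bQ\atimp\in\mathds{R}^{n\times p}$, so the product is a zero row vector in $\mathds{R}^{1\times p}$, consistent with the stated result.

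Next I would resolve the one-sided limits. Because the impulsive force leaves the generalized positions continuous, \eqref{eq:jump-in-position} gives a well-defined $\qtimp$, and since the event is detected as the trajectory \emph{arrives} at the surface, the relevant velocity and position sensitivity are the pre-event values: $\dbq\atimp = \vminus$ and $\bQ\atimp = \bQ\beforeimp$. Substituting these and isolating $\frdtdrho$ from the resulting scalar-coefficient linear relation gives exactly \eqref{eq:sensitivity-time-of-invent}, the division being legitimate precisely when the scalar $\frac{dr}{d\bq}(\qtimp)\cdot\vminus$ in the denominator is nonzero.

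The main obstacle is not the algebra but the justification of the underlying regularity, which is twofold: first, establishing that $\timp(\brho)$ is a well-defined differentiable function via the implicit function theorem applied to $r(\bq(t,\brho))=0$, whose nondegeneracy hypothesis $\tfrac{\partial}{\partial t}r(\bq(t,\brho))\big|_{\timp} = \frac{dr}{d\bq}(\qtimp)\cdot\vminus \ne 0$ is exactly the transversality condition appearing in the denominator; and second, arguing that this nondegeneracy is guaranteed by the standing no-grazing assumption (the Remark following Definition~\ref{def:time-of-invent}), which rules out tangential contact of the trajectory with the event surface and hence forbids a vanishing denominator. Once transversality is secured, the chain-rule computation and the passage to pre-event limits are routine.
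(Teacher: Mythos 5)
Your proposal is correct and follows essentially the same route as the paper: implicit differentiation of the event condition $r\bigl(\bq(\timp(\brho),\brho)\bigr)=0$ via the chain rule, followed by solving for $\frdtdrho$. The extra remarks on the one-sided (pre-event) limits and on transversality guaranteeing a nonzero denominator are welcome refinements, but the core argument coincides with the paper's proof.
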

\begin{pf}
The time at which the event function becomes zero is indirectly dependent on the system parameters $\brho$. We evaluate the derivative of equation \eqref{eq:event_function} with respect to the system parameters:
\begin{equation}
\label{eq:event_function1}
0 = r \ \bigl( \bq(\timp,\brho) \bigr) \quad \Rightarrow \quad
0=\frdrdrho=\frac{d r}{d\bq}\, \left( \frac{d\bq}{d\brho} + \dbq \, \frdtdrho \right).
\end{equation} 
Rearrange the terms in Eq.~\eqref{eq:event_function1} to obtain Eq.~\eqref{eq:sensitivity-time-of-invent}.
\qed
\end{pf}
%

\subsection{The jump in the sensitivity of the position state vector due to the event}
This section provides the jumps in the sensitivities of the position state vector $\bq(t)$ at the time of event  \cite{Donze07systematicsimulation, backer1966jump, hiskens2000trajectory,Barton1999,Barton1998,rozenvasser1967general}. Due to the nonzero inertia the position state variable is continuous in time \eqref{eq:jump-in-position}. However, its sensitivity can be discontinuous at the time of event, as established next.

\begin{theorem}[Jump in position sensitivity \cite{Donze07systematicsimulation, backer1966jump, hiskens2000trajectory, barton2002modeling}] 
\label{thm:jump-in-position-sensitivity}
Let $\vplus,\vminus \in \mathds{R}^{n}$ be the generalized velocity state vectors  after and before the invent, respectively; the corresponding velocity jump function was introduced in Eq.~\eqref{eq:jump-in-velocity}. Let $\bQ\afterimp$ and $\bQ\beforeimp \in \mathds{R}^{n{\times}p}$  be the sensitivities of the generalized position state vectors after and before the invent, respectively. The jump equation of the sensitivities of the generalized position state vector is:
\begin{eqnarray}
\label{eq:jump-position-sensitivity}
\bQ\afterimp = \bQ\beforeimp - \bigg( \vplus - \vminus \bigg) \cdot \frdtdrho.
\end{eqnarray}
\end{theorem}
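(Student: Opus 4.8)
The plan is to argue directly from the twin perturbed systems of Definition~\ref{def:perturbed-twins}, using the limit definitions of the sensitivities, and to evaluate the ``before'' sensitivity $\bQ\beforeimp$ at the earlier event time $\tau_1$ and the ``after'' sensitivity $\bQ\afterimp$ at the later event time $\tau_2$. The difficulty is that these two quantities are anchored at \emph{different} times, so the heart of the argument is to bridge the infinitesimal gap $[\tau_1,\tau_2]$ by expanding each trajectory to first order in $\delta\tau$ and thereby relating the position differences at $\tau_2$ to those at $\tau_1$.

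First I would record the first-order expansions of each system's position across $[\tau_1,\tau_2]$. The crucial observation is an asymmetry in which velocity regime each system occupies on this interval: system~1 has \emph{already} crossed the event at $\tau_1$, so on $[\tau_1,\tau_2]$ it evolves under the post-event dynamics, giving $\qoneTautwo = \qoneTauone + \voneTauonePlus\,\delta\tau + O(\delta\tau^2)$; system~2 does \emph{not} reach the event until $\tau_2$, so on $[\tau_1,\tau_2]$ it is still governed by the pre-event dynamics, giving $\qtwoTautwo = \qtwoTauone + \vtwoTauone\,\delta\tau + O(\delta\tau^2)$. Both expansions rely only on $\bq$ being $C^1$ on its respective smooth branch with $\dbq=\bv$, and on the continuity of position at the event \eqref{eq:jump-in-position}.

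Subtracting the two expansions yields
\[
\qtwoTautwo - \qoneTautwo = \bigl(\qtwoTauone - \qoneTauone\bigr) + \bigl(\vtwoTauone - \voneTauonePlus\bigr)\,\delta\tau + O(\delta\tau^2).
\]
I would then divide by $\delta\brho$ and pass to the limit $\delta\brho\to 0$. Since $\delta\tau = O(\delta\brho)$, the quadratic remainder contributes $O(\delta\brho)\to 0$. The left-hand side divided by $\delta\brho$ converges to $\bQ\afterimp$ and the first bracket to $\bQ\beforeimp$ by the limit definitions stated just before the figures; the velocity factor converges to $\vminus-\vplus$ by the limits collected in \eqref{eqn:twin-system-limits}; and $\delta\tau/\delta\brho = \bigl(\timp(\brho_2)-\timp(\brho_1)\bigr)/(\brho_2-\brho_1) \to \frdtdrho$ by the definition of the derivative, whose closed form is supplied by Theorem~\ref{thm:sensitivity-time-of-invent}. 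Combining these limits gives $\bQ\afterimp = \bQ\beforeimp + (\vminus-\vplus)\,\frdtdrho$, which is exactly \eqref{eq:jump-position-sensitivity}.

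The main obstacle I anticipate is not the algebra but correctly identifying the velocity-regime asymmetry on $[\tau_1,\tau_2]$ — that system~1 carries the post-event velocity $\voneTauonePlus$ while system~2 carries the pre-event velocity $\vtwoTauone$ — since interchanging these would flip the sign of the jump term. A secondary point requiring care is the remainder bookkeeping: one must confirm that $\delta\tau$ is genuinely first order in $\delta\brho$ (guaranteed by Theorem~\ref{thm:sensitivity-time-of-invent} under the non-grazing assumption) so that the $O(\delta\tau^2)$ contributions drop out cleanly in the limit.
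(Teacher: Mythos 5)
Your proposal is correct and follows essentially the same route as the paper's own proof: the twin perturbed systems of Definition~\ref{def:perturbed-twins}, the first-order expansions of each position across $[\tau_1,\tau_2]$ with system~1 in the post-event velocity regime and system~2 still in the pre-event regime, followed by subtracting, scaling by $\delta\brho$, and passing to the limit using \eqref{eqn:twin-system-limits}. Your explicit bookkeeping of the $O(\delta\tau^2)$ remainder and the observation that $\delta\tau = O(\delta\brho)$ is a slight tightening of what the paper leaves implicit, but it is the same argument.
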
  
\begin{pf} 
Consider the twin perturbed systems from Definition \ref{def:perturbed-twins}. The evolution of positions is illustrated in Fig.~\ref{fig:im1}, where the two different dashed line trajectories represent the position variables of the two perturbed systems. 
The jump in the velocity state variables occurs at time $\tau_1$ only for the first system. The position variables at time $\tau_2$ for both systems are:
\begin{eqnarray}
\label{eq:jump-in-position-sensitivity-1}
\begin{split}
\qoneTautwo &= \qoneTauone + h\big( \voneTauoneMinus \big) \, \delta{\tau}, \\ 
\qtwoTautwo  &= \qtwoTauone + \vtwoTauone\, \delta{\tau}.
\end{split}
\end{eqnarray}
Subtract the two equations and scale by the perturbation in the parameters:
\begin{equation}
\label{eq:jump-in-position-sensitivity-2}
\frac{ \qtwoTautwo - \qoneTautwo }{\delta\brho}=  
-\left( \voneTauonePlus - \vtwoTauone \right)\,\frac{\delta{\tau}}{\delta\brho}  + 
\frac{\qtwoTauone - \qoneTauone}{\delta\brho}.
\end{equation}
Using \eqref{eqn:twin-system-limits} and taking the limit $\delta\brho \to 0$ in \eqref{eq:jump-in-position-sensitivity-2} we obtain \eqref{eq:jump-position-sensitivity}. The trajectory state differences are illustrated by the vertical lines in Fig.~\ref{fig:im1}.  
\end{pf}

\subsection{The jump in the sensitivity of the velocity state vector due to the event}
This section provides the jumps in the sensitivities of the velocity state vector $\bv(t)$ at the time of event  \cite{hiskens2000trajectory,Barton1999,Barton1998,rozenvasser1967general} corresponding to the jump function \eqref{eq:jump-in-velocity}.

\begin{theorem}[Jump in velocity sensitivity.]  \cite{hiskens2000trajectory,Barton1999,Barton1998,rozenvasser1967general}
\label{thm:jump-in-velocity-sensitivity}
Let $\dqrhoplus,\dqrhominus  \in \mathds{R}^{n{\times}p}$ be the sensitivities of the generalized position state vectors after and before the invent, respectively. Let $\vplus$ and $\vminus \in \mathds{R}^{n}$ be the velocity state vectors after and before the invent affected by the jump function Eq.~\eqref{eq:jump-in-velocity} , respectively.
Let $\ddbq\afterimp$ and $\ddbq\beforeimp \in \mathds{R}^{n}$ be the generalized acceleration state vectors after and before the invent, respectively.  The jump equation of the sensitivities of the generalized velocity state vector is:
\begin{eqnarray}
\label{eq:jump-in-velocity-sensitivity}
\dqrhoplus &=& h_\bq\beforeimp \cdot \qrhominus +h_\bv\beforeimp \cdot  \dqrhominus 
+\left( h_\bq\beforeimp  \cdot \vminus -\ddbq \timpplus + h_\bv\beforeimp  \cdot \ddbq \beforeimp  {+ h_t}\beforeimp \right) \cdot \frdtdrho {+ h_\brho\beforeimp},
\end{eqnarray}
where the Jacobians of the jump function are:
\[
\begin{split}
h_t\beforeimp &:= \frac{\partial h}{\partial t}\big(\timp, \qtimp, \vminus ,\brho\big) \in \Re^{f}, \qquad
h_\bq\beforeimp := \frac{\partial h}{\partial\bq}\big(\timp, \qtimp, \vminus ,\brho\big)\in \Re^{f \times n}, \\
h_\bv\beforeimp &:= \frac{\partial h}{\partial\bv}\big(\timp,\qtimp, \vminus,\brho \big)\in \Re^{f \times f}, \qquad
h_\brho\beforeimp := \frac{\partial h}{\partial\brho}\big(\timp,\qtimp, \vminus,\brho \big)\in \Re^{f \times p}.
\end{split}
\]
\end{theorem}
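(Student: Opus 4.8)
The plan is to follow the same strategy as in the proof of Theorem~\ref{thm:jump-in-position-sensitivity}, using the twin perturbed systems of Definition~\ref{def:perturbed-twins} and the velocity trajectories sketched in Fig.~\ref{fig:im2}. The velocity sensitivity after the event is recovered from its defining limit
\[
\dqrhoplus = \lim_{\delta\brho \to 0}\, \frac{\vtwoTautwoPlus - \voneTautwo}{\delta\brho},
\]
so the task reduces to expanding the numerator to first order. As in the position case, the key device is to evaluate \emph{both} systems at the later event time $\tau_2$: system~2 is then exactly at its event with post-jump velocity $\vtwoTautwoPlus$, while system~1 has already jumped at $\tau_1 < \tau_2$ and has evolved under the post-event dynamics for the short interval $\delta\tau = \tau_2 - \tau_1$.

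First I would propagate system~1's velocity from its jump at $\tau_1$ forward to $\tau_2$ by a Taylor expansion, $\voneTautwo = \voneTauonePlus + \ddbq\timpplus\,\delta\tau + \mathcal{O}(\delta\tau^2)$, which is where the post-event acceleration $\ddbq\timpplus$ enters. I would then insert the jump function on both sides, writing $\voneTauonePlus = h\big(\tau_1,\qoneTauone,\voneTauoneMinus,\brho_1\big)$ and $\vtwoTautwoPlus = h\big(\tau_2,\qtwoTautwo,\vtwoTautwoMinus,\brho_2\big)$, and expand the difference $\vtwoTautwoPlus - \voneTauonePlus$ to first order in its four arguments. Because the two pre-event states coalesce to the common limit $\big(\timp,\qtimp,\vminus,\brho\big)$ as $\delta\brho \to 0$, all four Jacobians $h_t\beforeimp$, $h_\bq\beforeimp$, $h_\bv\beforeimp$, $h_\brho\beforeimp$ may be evaluated at that single state; they multiply the increments $\delta\tau$, $\qtwoTautwo - \qoneTauone$, $\vtwoTautwoMinus - \voneTauoneMinus$, and $\delta\brho$, respectively.

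The remaining step is to express the position and velocity increments in terms of the pre-event sensitivities and accelerations. Propagating system~2 from $\tau_1$ to $\tau_2$ under the pre-event dynamics gives $\qtwoTautwo - \qoneTauone = (\qtwoTauone - \qoneTauone) + \vminus\,\delta\tau + \text{h.o.t.}$ and $\vtwoTautwoMinus - \voneTauoneMinus = (\vtwoTauone - \voneTauoneMinus) + \ddbq\beforeimp\,\delta\tau + \text{h.o.t.}$, which is where the pre-event acceleration $\ddbq\beforeimp$ appears. Recognising $\qtwoTauone - \qoneTauone \to \qrhominus\,\delta\brho$ and $\vtwoTauone - \voneTauoneMinus \to \dqrhominus\,\delta\brho$ from their defining limits, I would collect the coefficients of $\delta\brho$ and of $\delta\tau$, divide by $\delta\brho$, and pass to the limit using $\delta\tau/\delta\brho \to \frdtdrho$ from Theorem~\ref{thm:sensitivity-time-of-invent}. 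The $\delta\brho$-terms produce $h_\bq\beforeimp\,\qrhominus + h_\bv\beforeimp\,\dqrhominus + h_\brho\beforeimp$, and the $\delta\tau$-terms produce the bracket $\big(h_t\beforeimp + h_\bq\beforeimp\,\vminus + h_\bv\beforeimp\,\ddbq\beforeimp - \ddbq\timpplus\big)\cdot\frdtdrho$, yielding exactly \eqref{eq:jump-in-velocity-sensitivity}.

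The main obstacle is bookkeeping rather than conceptual: one must expand each trajectory about the correct time and with the correct pre- or post-event dynamics, so that $\ddbq\timpplus$ arises solely from advancing system~1 after its jump, while $\ddbq\beforeimp$ arises solely from advancing system~2 before its jump inside the velocity argument of $h$. Keeping these two acceleration contributions distinct, and justifying that the Jacobians of $h$ can all be frozen at the single limiting event state $\big(\timp,\qtimp,\vminus,\brho\big)$, is where the care is required.
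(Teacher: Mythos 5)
Your proposal is correct and follows essentially the same route as the paper's proof: both use the twin perturbed systems, propagate system~1 past its jump with the post-event acceleration and system~2 up to its later event with the pre-event dynamics, linearize the jump function $h$ about the common limiting state, and collect the $\delta\brho$- and $\delta\tau$-coefficients before passing to the limit. The only difference is organizational (you perform one first-order expansion of $h$ in all four arguments with the increments pre-decomposed, whereas the paper substitutes the propagated arguments first and then linearizes in two stages), which does not change the argument.
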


\begin{pf}
We consider again the twin perturbed systems from Definition \ref{def:perturbed-twins}. The jumps in velocities are illustrated in Fig.~\ref{fig:im2}. The velocities for each system are determined as follows:
\begin{equation}
\label{eq:jump sensitivity velocity}
\begin{split}
\voneTautwo &=\voneTauonePlus  + \feom \Big(\,\tau_1, \,\qoneTauone, \,\voneTauonePlus, \,\brho_1\, \Big) \, \delta{\tau},\\
 &=h\big( \, \tau_1,  \, \qoneTauone,  \,\voneTauoneMinus, \,\brho_1\, \big) + \feom \Big(\,\tau_1, \,\qoneTauone, \,h\Big(\, \tau_1, \,\qoneTauone, \,\voneTauoneMinus, \,\brho_1\, \Big), \,\brho_1\, \Big) \, \delta{\tau},\\ 
\vtwoTautwoPlus &= h \ \left(\, \tau_2,  \,\qtwoTautwo, \,\vtwoTautwoMinus,  \,\brho_2\,  \,\right) \\ 
 &= h \ \Big(\, \tau_2,  \,\qtwoTauone+\vtwoTauone \ \delta{\tau},\; \vtwoTauone  + \feom \Big(\tau_1, \,\qtwoTauone, \,\vtwoTauone, \,\brho_2 \Big) \, \delta{\tau}, \,\brho_2\,  \, \Big) \\
&\approx  h\big(\, \tau_2,  \,\qtwoTauone,\vtwoTauone, \,\brho_2\,  \,  \big) \; + \;
\frac{d h}{d \bq}\big(\qtwoTauone,\vtwoTauone \big) \cdot \vtwoTauone \ \delta{\tau} \\
&+\frac{d h}{d \bv}\big(\qtwoTauone,\vtwoTauone \big)  \cdot
 \feom \Big(\,\tau_1, \,\qtwoTauone, \,\vtwoTauone, \,\brho_2\, \Big) \, \delta{\tau}, 
\end{split}
\end{equation}
where ${\feom}$ is the instantaneous acceleration of the system from Eq.~\eqref{eq:EOM-ODE}. The last relation represents a linearization (first order Taylor expansion) that is infinitely accurate since $\delta\tau$ is infinitesimally small. 
The scaled difference between the velocity state vectors at the time of event is :
\begin{eqnarray*}
\frac{\vtwoTautwoPlus - \voneTautwo}{ \delta\brho} &\approx&
\frac{h\big(\,\tau_2, \,\qtwoTauone,\vtwoTauone, \,\brho_2\,\big) - h\big(\,\tau_1, \,\qoneTauone,\voneTauoneMinus, \,\brho_1\,\big) }{\delta\brho} \\
&& -\feom \Big(\, \tau_1,\, \qoneTauone,\, h \big(\qoneTauone, \voneTauoneMinus\big),\, \brho_1\, \Big)\, \frac{\delta{\tau}}{\delta\brho} \\ 
&& +\frac{d h}{d \bq}\big(\qtwoTauone,\vtwoTauone \big) \cdot \vtwoTauone 
\cdot \frac{\delta{\tau}}{\delta\brho}
+\frac{d h}{d \bv}\big(\qtwoTauone,\vtwoTauone\big) \cdot
 \feom \Big(\,\tau_1, \,\qtwoTauone, \,\vtwoTauone, \,\brho_2\, \Big) \cdot \frac{\delta{\tau}}{\delta\brho} \\
&\approx&
\frac{d h}{d t}\big(\qoneTauone,\voneTauoneMinus \big) \cdot \frac{\tau_2 - \tau_1}{ \delta\brho} +
\frac{d h}{d \bq}\big(\qoneTauone,\voneTauoneMinus \big) \cdot \frac{\qtwoTauone - \qoneTauone}{ \delta\brho} +
\frac{d h}{d \bv}\big(\qoneTauone,\voneTauoneMinus \big) \cdot \frac{\vtwoTauone - \voneTauoneMinus}{ \delta\brho} \\
&& + \frac{d h}{d \brho}\big(\qoneTauone,\voneTauoneMinus \big)
-\feom \Big(\tau_1,\,\qoneTauone,~ h \big(\qoneTauone,\voneTauoneMinus),\,\brho_1 \Big)\, \frac{\delta{\tau}}{\delta\brho} \\ 
&& +\frac{d h}{d \bq}\big(\qtwoTauone,\vtwoTauone \big) \cdot \vtwoTauone
\cdot \frac{\delta{\tau}}{\delta\brho}
 +\frac{d h}{d \bv}\big(\qtwoTauone,\vtwoTauone \big) \cdot
 \feom \Big(\tau_1, \,\qtwoTauone, \,\vtwoTauone, \,\brho_2 \Big) \cdot  \frac{\delta{\tau}}{\delta\brho}. 
\end{eqnarray*} 
Taking the limit $\delta\brho \to 0$ and using Eq. \eqref{eqn:twin-system-limits} yields:
\[
\begin{split}
&\frac{d h}{d \bq}\big(\qoneTauone,\voneTauoneMinus \big) \to
h_\bq\beforeimp, \quad
\frac{d h}{d \bq}\big(\qtwoTauone,\vtwoTauone \big) \to
h_\bq\beforeimp, \\
&\frac{d h}{d \bv}\big(\qoneTauone,\voneTauoneMinus \big) \to
h_\bv\beforeimp, \quad
\frac{d h}{d \bv}\big(\qtwoTauone,\vtwoTauone \big) \to
h_\bv\beforeimp,\\
&\frac{d h}{d t}\big(\qoneTauone,\voneTauoneMinus \big) \to
h_t\beforeimp, \quad
\frac{d h}{d \brho}\big(\qoneTauone,\voneTauoneMinus \big) \to
h_\brho\beforeimp,\\
&\feom \Big(\tau_1, \,\qtwoTauone, \,\vtwoTauone, \,\brho_2 \Big)
\to \feom \Big(\timp, \,\qminus, \,\vminus, \,\brho \Big)
= \ddbq \beforeimp,\\
&\feom \Big(\tau_1, \,\qoneTauone,\voneTauonePlus, \,\brho_1 \Big)
 \to \feom \Big(\timp, \,\qplus, \,\vplus, \,\brho \Big)
= \ddbq \timpplus.
\end{split} 
\]
which leads to Eq.~\eqref{eq:jump-in-velocity-sensitivity}.

For simplicity we denote the derivatives of the jump function  with respect to $\zeta \in \{ t, \bq,\bv,\brho \}$ by:
\[
\begin{split}
&\frac{d h}{d \zeta}\big(\tau_1, \,\qoneTauone, \,\voneTauoneMinus, \,\brho_1\, \big) =
\frac{d h}{d \zeta}\big(\qoneTauone,\voneTauoneMinus \big) \\
&\frac{d h}{d \bq}\big(\tau_1, \,\qtwoTauone, \,\vtwoTauone, \,\brho_2\, \big) =
\frac{d h}{d \zeta}\big(\qtwoTauone,\vtwoTauone \big) \\
\end{split} 
\]
\qed
\end{pf} 
%


\begin{theorem}[Events that only change the acceleration]
\label{thm:ODE-change-in-acceleration}
We now consider an event where the system undergoes a sudden change of the equation of motions \eqref{eq:EOM} at $\timp$:
\[
\ddbq \beforeimp = \feom^- \big(\timp,\qtimp,\vtimp, \brho\big) =:  \feom^-\atimp
\quad \stackrel{\rm event}{\longrightarrow} \quad
\ddbq \timpplus = \feom^+ \big(\timp,\qtimp,\vtimp, \brho\big)=:  \feom^+\atimp.
\]
There is no abrupt jump in the system velocity, $\vplus=\vminus$, and therefore the jump function \eqref{eq:jump-in-velocity} is identity. Let $\dqrhoplus,\dqrhominus  \in \mathds{R}^{n{\times}p}$ be the sensitivities of the generalized position state vectors right after and before the event, respectively.  Let $\ddbq\afterimp$ and $\ddbq\beforeimp \in \mathds{R}^{n}$ be the generalized acceleration state vectors right after and before the event, respectively.  
The jump equation of the sensitivities of the generalized velocity state vector is:
\begin{eqnarray}
\label{eq:jump-in-velocity-sensitivity-changeacc}
\dqrhoplus = \dqrhominus -\left( \ddbq \timpplus - \ddbq \beforeimp \right) \cdot \frdtdrho
= \dqrhominus -\left( \feom^+\atimp - \feom^-\atimp \right) \cdot \frdtdrho.
\end{eqnarray}
\end{theorem}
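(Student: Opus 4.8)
The plan is to obtain \eqref{eq:jump-in-velocity-sensitivity-changeacc} as a specialization of the general velocity-sensitivity jump in Theorem~\ref{thm:jump-in-velocity-sensitivity}. The crucial observation is that an event which only switches the equation of motion produces no instantaneous jump in the generalized velocity, so the velocity jump function \eqref{eq:jump-in-velocity} is the identity map $h\big(\timp,\qtimp,\vminus,\brho\big)=\vminus$. Its four partial Jacobians therefore collapse to
\[
h_t\beforeimp = \bzero,\qquad h_\bq\beforeimp = \bzero,\qquad h_\bv\beforeimp = \bI,\qquad h_\brho\beforeimp = \bzero.
\]

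First I would substitute these values directly into the general jump formula \eqref{eq:jump-in-velocity-sensitivity}. The three terms carrying $h_\bq\beforeimp$, $h_t\beforeimp$, and $h_\brho\beforeimp$ vanish, the term $h_\bv\beforeimp\cdot\dqrhominus$ reduces to $\dqrhominus$, and the coefficient of $\frdtdrho$ shrinks from its four-term bracket to $-\ddbq\timpplus + \ddbq\beforeimp$. This immediately gives $\dqrhoplus = \dqrhominus - \big(\ddbq\timpplus - \ddbq\beforeimp\big)\cdot\frdtdrho$, which is the first equality in \eqref{eq:jump-in-velocity-sensitivity-changeacc}. The second equality follows by identifying the one-sided accelerations with the two dynamics, $\ddbq\beforeimp = \feom^-\atimp$ and $\ddbq\timpplus = \feom^+\atimp$.

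The step I expect to be the main obstacle is justifying that, in this mixed setting, the pre-event acceleration term is evaluated with $\feom^-$ while the post-event term is evaluated with $\feom^+$ --- a point that is automatic in Theorem~\ref{thm:jump-in-velocity-sensitivity}, where a single $\feom$ governs both sides of the event. To settle it I would revisit the twin-perturbed-system construction of Definition~\ref{def:perturbed-twins}. Since the first system has already switched dynamics at $\tau_1$, its propagation from $\tau_1$ to $\tau_2$ uses $\feom^+$, so the limiting post-event acceleration is $\feom^+\big(\timp,\qplus,\vplus,\brho\big)=\feom^+\atimp = \ddbq\timpplus$; the second system has not yet switched on the interval $[\tau_1,\tau_2]$, so its propagation uses $\feom^-$, yielding the limiting pre-event acceleration $\feom^-\big(\timp,\qminus,\vminus,\brho\big)=\feom^-\atimp=\ddbq\beforeimp$. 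Because the jump is the identity we also have $\qplus=\qminus$ and $\vplus=\vminus$, so both one-sided accelerations are evaluated at the common state $\big(\timp,\qtimp,\vtimp,\brho\big)$ and differ only through the change of dynamics. This confirms that the derivation underlying \eqref{eq:jump-in-velocity-sensitivity} carries over verbatim under the two-sided interpretation, completing the specialization.
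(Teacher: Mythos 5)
Your proposal is correct and follows essentially the same route as the paper: the paper's proof likewise specializes the general jump formula \eqref{eq:jump-in-velocity-sensitivity} by setting $h_\bq=\bzero$ and $h_\bv=\bI$ (leaving $h_t=\bzero$ and $h_\brho=\bzero$ implicit). Your additional argument---rerunning the twin-system construction to confirm that the pre- and post-event accelerations are evaluated with $\feom^-$ and $\feom^+$ respectively---fills in a point the paper passes over in silence, but it does not change the underlying approach.
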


\begin{pf}
For the type of events under consideration we have that:
\[
\frac{d h}{d \bq}= \bzero, \quad \frac{d h}{d \bv}=\bI.
\]
Using this in Eq. \eqref{eq:jump-in-velocity-sensitivity} leads to Eq.~ \eqref{eq:jump-in-velocity-sensitivity-changeacc}. 

\qed
\end{pf}

\subsection{The jump in the sensitivity of the cost functional due to the event}
We now consider the sensitivity of the quadrature variable $\bz(t)$. Due to the integral form of Eq. \eqref{eqn:quadrature-variable} defining $\bz$,  the quadrature variable is continuous in time: 
\[
\zplus= \zminus = z\atimp.
\]
However, its sensitivity can be discontinuous at the event time, as established next.

\begin{theorem}[Jump in quadrature sensitivity.] 
\label{thm:jump-in-quadrature-sensitivity}
Let $\Zplus$ and $\Zminus$, with  $\bZ \in \mathds{R}^{p}$, be the sensitivities of the quadrature variable $\bz(t)$ (Definition \ref{The quadrature sensitivity}) right after and right before the event, respectively.
Let 
\[
\gplus := \gplusarg,
\quad
\gminus := \gminusarg,
\]
be the running cost function evaluated right after and right before the event, respectively. 
The sensitivity of the cost functional changes during the event as follows:
\begin{eqnarray}
\label{eq:jump-in-quadrature-sensitivity}
\Zplus= \Zminus - \Big( \gplus - \gminus \Big) \cdot \frdtdrho.
\end{eqnarray}
\end{theorem}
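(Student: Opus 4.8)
The plan is to recognize that the quadrature variable $\bz$ plays exactly the same structural role as the position state $\bq$ in Theorem~\ref{thm:jump-in-position-sensitivity}: by its integral definition \eqref{eqn:quadrature-variable} it is continuous across the event, $\zplus=\zminus$, while its time derivative $\dbz=\tilde{g}\fin$ changes value at $\timp$. That change is inherited from the velocity jump: since $\bq$ is continuous but $\bv$ jumps from $\vminus$ to $\vplus$ via \eqref{eq:jump-in-velocity}, the integrand $\tilde{g}$ evaluated along the trajectory jumps from $\gminus$ to $\gplus$. Thus $\bz$ behaves as a ``position-like'' variable whose ``velocity'' $\dbz$ is discontinuous, and the jump in its sensitivity $\bZ$ should follow the position-sensitivity derivation under the substitutions $\bq\mapsto\bz$ and $(\vplus-\vminus)\mapsto(\gplus-\gminus)$.

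Concretely, I would invoke the twin perturbed systems of Definition~\ref{def:perturbed-twins} and evaluate the quadrature variables of both systems at the common later time $\tau_2$. The first system has already passed its event (at $\tau_1$), so on $[\tau_1,\tau_2]$ it accumulates cost at the post-event rate; to first order in $\delta\tau$, $\zoneTautwo = \zoneTauone + \tilde{g}\big(\tau_1,\qoneTauone,\voneTauonePlus,\brho_1\big)\,\delta\tau$. The second system has not yet reached its event (at $\tau_2$), so it accumulates cost at the pre-event rate, $\ztwoTautwo = \ztwoTauone + \tilde{g}\big(\tau_1,\qtwoTauone,\vtwoTauone,\brho_2\big)\,\delta\tau$. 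These linearizations are exact in the infinitesimal limit and rely on the continuity of $\bz$ at each system's event time.

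I would then subtract the two expressions, divide by the parameter perturbation $\delta\brho$, and pass to the limit $\delta\brho\to 0$. Using the limits \eqref{eqn:twin-system-limits}, the left-hand quotient $(\ztwoTautwo-\zoneTautwo)/\delta\brho$ converges to $\Zplus$ (both trajectories are evaluated past the event at $\tau_2$), the residual $(\ztwoTauone-\zoneTauone)/\delta\brho$ converges to $\Zminus$, the two running-cost evaluations converge to $\gplus$ and $\gminus$ respectively, and $\delta\tau/\delta\brho\to\frdtdrho$ by Theorem~\ref{thm:sensitivity-time-of-invent}. Collecting terms yields $\Zplus=\Zminus-(\gplus-\gminus)\cdot\frdtdrho$, which is \eqref{eq:jump-in-quadrature-sensitivity}.

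The main obstacle is conceptual rather than computational: one must justify carefully that, although $\bz$ carries no explicit jump function of its own, $\dbz$ nonetheless acquires a genuine discontinuity purely because the argument $\bv$ of $\tilde{g}$ is discontinuous. I would confirm that no extra contribution arises from the continuous position $\bq$ or from the direct $t$- and $\brho$-dependence of $\tilde{g}$, all of which vary smoothly across the event. Keeping the bookkeeping straight---which twin integrates the pre-event rate and which the post-event rate, and hence the sign of $(\gplus-\gminus)$---is the only remaining place where care is needed.
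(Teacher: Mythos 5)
Your proposal is correct and follows essentially the same route as the paper: the twin perturbed systems of Definition~\ref{def:perturbed-twins}, first-order evaluation of the accumulated cost over $[\tau_1,\tau_2]$ at the post-event rate for system 1 and the pre-event rate for system 2, then subtracting, scaling by $\delta\brho$, and passing to the limit via \eqref{eqn:twin-system-limits}. The only cosmetic difference is that the paper evaluates the second system's integrand at the right endpoint $\tau_2$ rather than at $\tau_1$, which is immaterial in the limit.
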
 
\begin{pf}
Consider again the twin perturbed systems from Definition \ref{def:perturbed-twins}, and evaluate the associated quadrature variables \eqref{eqn:quadrature-variable} at the event:
\begin{equation}
\begin{split}
\zoneTautwo &=
\zoneTauone  
+ \int_{\tau_1}^{\tau_2}  \tilde{g}\bigl(t,\bq_1(t),\bv_1(t),\brho_1\bigr) \, dt
= \zoneTauone   
+ \tilde{g}\bigl(\,\tau_1,\,\bq_1|_{\tau_1},\,\bv_1|_{\tau_1}^+,\,\brho_1\,\bigr) \, \delta{\tau}, \\
\ztwoTautwo  &=
\ztwoTauone 
+ \int_{\tau_1}^{\tau_2}  \tilde{g}\bigl(t,\bq_2(t),\bv_2(t),\brho_2\bigr)\, dt
= \ztwoTauone  
+ \tilde{g}\bigl(\,\tau_2,\,\bq_2|_{\tau_2},\,\bv_2|_{\tau_2}^-,\,\brho_2\,\bigr) \, \delta{\tau}.
\end{split}
\end{equation}
Subtract the two equations and scale by the parameter perturbation to obtain:
\[
\begin{split}
\frac{ \ztwoTautwo - \zoneTautwo }{\delta\brho}  &=  
\frac{ \ztwoTauone- \zoneTauone }{\delta\brho} 
 + \left(
\tilde{g}\bigl(\,\tau_2,\,\bq_2|_{\tau_2},\,\bv_2|_{\tau_2}^-,\,\brho_2\,\bigr)
-
\tilde{g}\bigl(\,\tau_1,\,\bq_1|_{\tau_1},\,\bv_1|_{\tau_1}^+,\,\brho_1\,\bigr)
\right)\, \frac{\delta\tau}{\delta\brho}.
\end{split}
\]
Taking the limit $\delta\brho \to 0$ leads to Eq.~\eqref{eq:jump-in-quadrature-sensitivity}. 
\qed
\end{pf}
%
%
\begin{remark}
When there are multiple events along the trajectory jumps in sensitivity ~\eqref{eq:jump-in-quadrature-sensitivity} will happen for each one.  The jump of the quadrature variable $\bZ$ is governed by the values of the cost function $g$ before and after the event. 
\end{remark}

\section{Direct sensitivity analysis for constrained multibody systems with smooth trajectories}
\label{sec:multibody-smooth}

This section reviews the direct sensitivity analysis for constrained systems governed by differential algebraic equations (DAEs). The presentation follows the authors' earlier work
\cite{Sandu_2013_sensitivity_ODE_multibody,Sandu_2014_sensitivity_ODE_multibody,zhu2014mbsvt,Zhu_2014_PhD,Sandu2015dynamic,Sandu_2017_vehicle-optimization}.

\subsection{Representation of constrained multibody systems}
Constrained multibody systems must satisfy the following kinematic constraints: 
\begin{subequations}
\label{eq:ConstraintsEq}
\begin{eqnarray}
\label{eq:ConstraintsEq-position}
\bzero  &=& \bPhi, \\
\label{eq:ConstraintsEq-velocity}
\bzero &=& \dPhi =\dPhidq \, \dbq + \bPhi_t \quad \Rightarrow \quad  \dPhidq \bv = -\bPhi_t, \\
\label{eq:ConstraintsEq-acceleration}
\bzero &=& \ddPhi =\dPhidq \, \ddbq + \dPhidqq \, ( \dbq, \dbq ) + \dPhidtdq \, \dbq + \bPhi_{t, \, t} \quad \Rightarrow \quad  \dPhidq \, \dbv = -  ( \dPhidq  \, \bv)\, \bv  - \dPhidtdq \, \bv - \bPhi_{t, \, t} := \Faccel.
\end{eqnarray}
\end{subequations}
Here \eqref{eq:ConstraintsEq-position} is a holonomic position constraint equation $\bPhi(t,\bq,\brho)=\bzero$, where $\bPhi : \mathds{R}^{1+n+p} \to \mathds{R}^{m}$ is a smooth `position constraint' function. The velocity \eqref{eq:ConstraintsEq-velocity} and the acceleration \eqref{eq:ConstraintsEq-acceleration} kinematic constraints are found by differentiating the position constraint with respect to time.
 
There are two main approaches to solve such systems, the DAE approach through direct inclusion of the algebraic constraints in the dynamics, and the ODE approach through either following locally the independent coordinates (Maggi) or through a penalty formulation.
%
%

\subsection{Direct sensitivity analysis for smooth systems in the index-3 differential-algebraic formulation}

\begin{definition}[Constrained multibody dynamics: the index-3 DAE formulation]
A constrained rigid multibody dynamics system is described by the following index-3 differential-algebraic equations (DAEs) \cite{Sandu_2014_sensitivity_ODE_multibody}:
\begin{equation}
\label{eq:EOM-DAE-index3}
\begin{cases}
\dbq &= \bv, \\
{\Mass}\left(t,\bq,\brho\right) \cdot \dbv &={\Force} \left(t,\bq,\bv,\brho\right) + \dPhidq^{\rm T}\left(t,\bq,\brho\right)\cdot\lambda, \\
\bPhi\left(t,\bq,\brho\right) &= \bzero,
\end{cases}
\quad t_0 \leq t \leq  t_F,  \quad \bq(t_0)=\bq_0(\brho),\quad\bv(t_0)=\bv_0(\brho).
\end{equation}
Unlike the ODE formulation \eqref{eq:EOM-ODE} the position vector of the system \eqref{eq:EOM-DAE-index3} is constrained by the equation  \eqref{eq:ConstraintsEq-position}. The joint forces $\bPhi_\bq^{\rm T}\,\lambda$ ensure that the system solution obeys the constraints at all points along the trajectory, and $\lambda \in \mathds{R}^{m}$ are Lagrange multipliers  associated with the position constraint \eqref{eq:ConstraintsEq-position}.
\end{definition}

%

Sensitivities of the position and velocity state variables are defined in \eqref{eq:sensitivity-of-solutions}. In addition, we need to consider the sensitivity of the Lagrange multipliers with respect to system parameters:
\begin{equation}
\label{eq:sensitivity-of-multipliers}
\Lambda(t,\brho) := \Drho\lambda(t) := \frac{d \lambda}{d \brho}(t, \brho) \in \mathds{R}^{m \times p}.
\end{equation}

\begin{definition}[TLM of the index-3 DAE formulation]
Sensitivities of solutions \eqref{eq:sensitivity-of-solutions} and multipliers \eqref{eq:sensitivity-of-multipliers} of the system \eqref{eq:EOM-DAE-index3} with respect to parameters evolve according to the tangent linear model derived in \cite{Sandu_2013_sensitivity_ODE_multibody,Sandu_2014_sensitivity_ODE_multibody,zhu2014mbsvt,Zhu_2014_PhD,Sandu2015dynamic,Sandu_2017_vehicle-optimization}:
\begin{equation*}
\begin{cases}
&\dot{\bQ} = \bV, \\
&\Mass\cdot \dot{\bV} =
\Force_\bv\cdot \bV
-\left(\Mass_{\bq}\, {\dot{\bv}} + \bPhi_{\bq,\bq}^{\rm T}\, \lambda -\Force_\bq \right) \cdot \bQ 
- \dPhidq^{\rm T}\cdot \Lambda + \Force_{\brho}-\Mass_{\brho}\, {\dot{\bv}} 
- \bPhi_{\bq,\brho}^{\rm T} \lambda, \\
&\dPhidq \cdot \bQ = -\bPhi_{\brho},
\end{cases}
\end{equation*}
with initial conditions given by Eq. \eqref{eq:TLM-IC}.
\end{definition}

\subsection{Direct sensitivity analysis for smooth systems in the index-1 differential-algebraic formulation}

\begin{definition}[Constrained multibody dynamics: the index-1 DAE formulation]
The index-1 formulation of the equations of motion is obtained by replacing the position constraint \eqref{eq:ConstraintsEq-position} in \eqref{eq:EOM-DAE-index3} with the acceleration constraint \eqref{eq:ConstraintsEq-acceleration}:
\begin{equation}
\label{eq:EOM-DAE-index1}
\begin{bmatrix}
\bI & \bzero & \bzero \\
\bzero & {\Mass}\left(t,\bq,\brho\right) & \dPhidq^{\rm T}\left(t,\bq,\brho\right) \\
\bzero & \dPhidq\left(t,\bq,\brho\right) & \bzero
\end{bmatrix}
\cdot
\begin{bmatrix}
\dbq \\ \dbv \\ \lambda
\end{bmatrix}
=
\begin{bmatrix}
\bv \\
{\Force} \left(t,\bq,\bv,\brho\right)  \\
\Faccel\left(t,\bq,\bv,\brho\right)
\end{bmatrix},
\quad t_0 \leq t \leq  t_F,  \quad \bq(t_0)=\bq_0(\brho),\quad\bv(t_0)=\bv_0(\brho),
\end{equation}
or equivalently,
\begin{equation}
\label{eq:DAE-index1}
\dbq = \bv, \qquad
\begin{bmatrix}
\dbv \\ \lambda
\end{bmatrix}
=
\begin{bmatrix}
{\Mass} & \dPhidq^{\rm T} \\
\dPhidq & \bzero
\end{bmatrix}^{-1}
\cdot
\begin{bmatrix}
{\Force}   \\
\Faccel
\end{bmatrix}=
\begin{bmatrix}
\fdaedv   \\
\fdaelb
\end{bmatrix}=
\fdae \fin.
\end{equation}
The algebraic equation has the form $\fdaelb -\lambda=0$.
\end{definition}

\begin{definition}[TLM of the index-1 DAE formulation]
Sensitivities of solutions \eqref{eq:sensitivity-of-solutions} and multipliers \eqref{eq:sensitivity-of-multipliers} of the system  \eqref{eq:EOM-DAE-index1} with respect to parameters evolve according to the tangent linear model derived in \cite{Sandu_2013_sensitivity_ODE_multibody,Sandu_2014_sensitivity_ODE_multibody,zhu2014mbsvt,Zhu_2014_PhD,Sandu2015dynamic,Sandu_2017_vehicle-optimization}:
\begin{equation*}
\label{eq:TLM-DAE-index1}
\begin{bmatrix}
\bI & \bzero & \bzero \\
\bzero & {\Mass} & \dPhidq^{\rm T} \\
\bzero & \dPhidq & \bzero
\end{bmatrix}
\cdot
\begin{bmatrix}
\dot{\bQ} \\ \dot{\bV} \\ \Lambda
\end{bmatrix}
=
\begin{bmatrix}
\bV \\
\Force_\bv\cdot  \bV
-\left(\Mass_{\bq} \, \dbv + \dPhidqq^{\rm T}\, \lambda  -\Force_\bq \right) \cdot \bQ  
+ \Force_{\brho}-\Mass_{\brho} \,\dot{\bv} 
- \bPhi_{\bq, \, \brho}^{\rm T}\, \lambda\\
\Faccel_\bv \cdot \bV
-\left(\dPhidqq \, \dbv  -\Faccel_{\bq} \right) \cdot \bQ +
\Faccel_{\brho} - \dPhidqdrho \, \dbv
\end{bmatrix},
\end{equation*}
with initial conditions given by Eq. \eqref{eq:TLM-IC}.
\end{definition}

\begin{definition}[Cost function] 
\label{def:CostFunction-DAE}
Following Definition \ref{def:CostFunction-DAE}, consider a smooth scalar ``trajectory cost function'' $\bg$ and a smooth scalar ``terminal cost function'' $w$. A general cost function has the form:
\begin{eqnarray*}
\label{eq:CostFunction-DAE}
&&\psi=  \int_{t_0}^{t_F} {\gdae \  dt} + \w.
\end{eqnarray*}
\end{definition}
Note that the trajectory cost function \eqref{eq:CostFunction} depends on both accelerations $\dbv$ and on the Lagrange multipliers $\lambda$. These are not independent variables and they can be resolved in terms of positions and velocities using \eqref{eq:EOM-DAE-index1}, to obtain an equivalent regular trajectory cost function:
\[
\bg\bigl(\,t,\,\bq,\,\bv,\,\dbv(t,\bq,\bv,\brho),\,\lambda(t,\bq,\bv,\brho),\,\brho\,\bigr) = \tilde{g}\bigl(\,t,\,\bq,\,\bv,\,\brho\,\bigr).
\] 
We keep accelerations and Lagrange multipliers (constraint forces) as explicit parameters in the cost function \eqref{eq:CostFunction-DAE} in order to give additional flexibility in practical applications. In addition, we define the `quadrature' variable $z(t) \in \mathds{R}$ as follows:
\begin{subequations} 
\label{eqn:DAE-quadrature-variable}
\begin{eqnarray}
\label{eqn:DAE-quadrature-variable-int}
\bz(t,\brho) &:=&  \int_{t_0}^{t} { \tilde{g}\bigl(t,\bq(t,\brho),\bv(t,\brho),\brho\bigr) \,  dt} \quad \Leftrightarrow \quad \\
\label{eqn:DAE-quadrature-variable-dif}
\dbz(t,\brho) &=&  \bg\bigl(\,t,\,\bq,\,\bv,\,\dbv,\,\lambda,\,\brho\,\bigr)  = \tilde{g}\bigl(t,\bq(t,\brho),\bv(t,\brho), \brho\bigr),  \quad t_0 \leq t \leq  t_F, \quad z(t_0,\brho)=0.
\end{eqnarray}
\end{subequations}

\begin{definition}[The DAE quadrature sensitivity] 
Similarly, let the `quadrature sensitivity' vector $Z(t,\brho)$ be the Jacobian of the `quadrature' variable $z(t,\brho)$ Eq. ~\eqref{eqn:DAE-quadrature-variable-int} with respect to the parameters $\brho$:
\[
Z_i(t,\brho) := \frac{\partial z(t,\brho)}{\partial \brho_i}, ~~ i=1,\dots,p; \quad
Z(t,\brho) := \nabla_\brho z(t,\brho) = \begin{bmatrix} Z_1(t,\brho) \cdots Z_p(t,\brho) \end{bmatrix} \in \mathds{R}^{1 \times p}.
\]
The time evolution equations of the quadrature sensitivities are given by the TLM obtained by differentiating 
\eqref{eqn:DAE-quadrature-variable-dif} with respect to the parameters:
\begin{equation}
\label{eq:TLM-quadrature}
\begin{split}
\dbZ_i & =  \frac{d\, \bg\bigl(\,t,\,\bq,\,\bv,\,\dbv,\,\lambda,\,\brho\,\bigr)}{d\,\brho_i} \\
&= g_\bq\cdot \bQ_{i} +  g_\bv\cdot \bV_{i}  + g_{\dbv}\cdot \frac{d \fdaedv}{d \brho}
+g_{\lambda}\cdot \frac{d \fdaelb}{d \brho} + g_{\brho_i} \\
&= \big(g_\bq +  g_{\dbv} \cdot \fdaedvq +g_{\lambda} \cdot \fdaelbq \big)\cdot \bQ_{i}  
+  \big(g_\bv +  g_{\dbv} \cdot \fdaedvv +g_{\lambda} \cdot \fdaelbv \big)\cdot \bV_{i}
+   g_{\dbv}\cdot\fdaelbq +g_{\lambda}\cdot\fdaelbv+g_{\brho_i} , \\
& i=1,\dots,p, \quad t_0 \le t \le t_F, \quad  \bZ_i(t_0,\brho) = 0.
\end{split}
\end{equation}
\end{definition}

\begin{definition} [Canonical index-1 sensitivity DAE]
\label{def:canonical-dae-sensitivity}
The canonical DAE system for the solution given by \eqref{eq:DAE-index1}, the DAE TLM given by \eqref{eq:TLM-DAE-index1}, and the sensitivity quadrature equations given by \eqref{eq:TLM-quadrature} need to be solved together forward in time, leading to the canonical sensitivity DAE that computes the derivatives of the cost function with respect to the system parameters $\brho$ for smooth systems:
\begin{eqnarray}
\label{eq:canonical-DAE-sensitivity}
\begin{bmatrix}
\dbq \\ \dbv \\ \lambda \\ \dbz \\ 
\big[ \dot{\bQ}_i  \big]_{i=1,\dots,p} \\
\big[ \dbV_i  \big]_{i=1,\dots,p} \\ 
\big[ \Lambda_i  \big]_{i=1,\dots,p} \\ 
\big[ \dbZ_i  \big]_{i=1,\dots,p}
\end{bmatrix}=
\begin{bmatrix}
\bv \\
\fdaedv\\
\fdaelb\\
 \tilde{\bg} \\
\left[  \bV_i  \right]_{i=1,\dots,p}  \\
\left[ \fdaedvq \bQ_i +\fdaedvv \bV_i +\fdaedvrhoi  \right]_{i=1,\dots,p} \\
\left[ \fdaelbq \bQ_i +\fdaelbv \bV_i +\fdaelbrhoi  \right]_{i=1,\dots,p} \\
\left[  \big(g_\bq +  g_{\dbv} \, \fdaedvq +g_{\lambda} \, \fdaelbq \big)\cdot \bQ_{i}  
+  \big(g_\bv +  g_{\dbv} \, \fdaedvv +g_{\lambda} \, \fdaelbv \big)\cdot \bV_{i}
+   g_{\dbv}\,\fdaelbq +g_{\lambda}\,\fdaelbv +g_{\brho_i}  \right]_{i=1,\dots,p}
\end{bmatrix},
\end{eqnarray}
where the state vector of the canonical index-1 sensitivity DAE \eqref{eq:canonical-DAE-sensitivity} is :
\begin{eqnarray}
\label{eq:canonical-DAEsensitivity-state}
\bX = \left[   \, \bq^{\rm T}, \, \bv^{\rm T}, \lambda^{\rm T}, \, \bz, ~
\bQ_1^{\rm T},  \dots , \bQ_p^{\rm T}, \,
\bV_1^{\rm T},  \dots , \bV_p^{\rm T}, \,
\Lambda_1^{\rm T},  \dots , \Lambda_p^{\rm T}, \,
\bZ_1, \dots, \bZ_p
\right] ^{\rm T}  \in \mathds{R}^{(n+1)(p+1)},
\end{eqnarray}
and the derivatives of the DAE function are:
\[
\fdaeq=  
\begin{bmatrix}
{\Mass} & \dPhidq^{\rm T} \\
 \dPhidq & \bzero
\end{bmatrix}^{-1}
\begin{bmatrix}
\Force_\bq -\Mass_{\bq} \, \dbv - \dPhidqq ^{\rm T}\, \lambda 
\\
\Faccel_{\bq}- \dPhidqq \dot{\bv}   
\end{bmatrix}
, \;
\fdaev= \begin{bmatrix}
{\Mass} & \dPhidq^{\rm T} \\
 \dPhidq & \bzero
\end{bmatrix}^{-1}
\begin{bmatrix}
\Force_\bv \\
\Faccel_\bv 
\end{bmatrix}
,\;
\fdaerho=\begin{bmatrix}
{\Mass} & \dPhidq^{\rm T} \\
 \dPhidq & \bzero
\end{bmatrix}^{-1}
\begin{bmatrix}
\Force_{\brho}-\Mass_{\brho} \, \dbv 
- \dPhidqdrho^{T}\, \lambda\\
\Faccel_{\brho} - \dPhidqdrho \, \dbv
\end{bmatrix}.
\]
\end{definition}

\subsection{Direct sensitivity analysis for smooth systems in the penalty ODE formulation}
\begin{definition}[Constrained multibody dynamics: the penalty ODE formulation]
Define the extended mass matrix $\overline{\Mass}:\mathds{R} \times \mathds{R}^{n} \times \mathds{R}^{n} \times \mathds{R}^{p} \rightarrow \mathds{R}^{n \times n}$ as: 
\begin{subequations}
\label{eq:EOM-ODE-penalty}
\begin{equation}
\label{eq:EOM-ODE-penalty-mass}
\overline{\Mass}\left(t,\bq,\bv,\brho\right) := \Mass\left(t,\bq,\bv,\brho\right) +\dPhidq^{\rm T}\left(t,\bq,\bv,\brho\right)\cdot \alpha\cdot \dPhidq\left(t,\bq,\bv,\brho\right), 
\end{equation}
where $\alpha \in \Re^{m \times m}$ is the penalty factor of the ODE penalty formulation.
Define the extended right hand side function $\overline{\Force}:\mathds{R} \times \mathds{R}^{n} \times \mathds{R}^{n} \times \mathds{R}^{p} \rightarrow \mathds{R}^{n}$ as:
\begin{equation}
\label{eq:EOM-ODE-penalty-force}
\overline{\Force}\left(t,\bq,\bv,\brho\right) := {\Force}\left(t,\bq,\bv,\brho\right)
-\dPhidq^{\rm T}\cdot \alpha\cdot \left(\dtdPhidq\, \bv +
\dPhi_{t}+2 \, \xi\, \omega\, \dPhi+\omega^2 {\bPhi}\right),
\end{equation}
where $\xi\in \Re$ and $\omega\in \Re$ are the natural frequency and damping ratio coefficients of the formulation, respectively, and  $\dPhi$ is the total time derivative of the kinematic constraints. The algebraic position constraints \eqref{eq:ConstraintsEq-position} are removed and an auxiliary spring-damper force is added in \eqref{eq:EOM-ODE-penalty-force} to prevent the system from deviating away from the constraints.

In the penalty formulation the EOM of a constrained rigid multibody system is the second order ODE:
\begin{equation}
\begin{cases}
\dbq &= \bv, \\
\dbv &= {\feom} \fin = \overline{\Mass}^{-1}\fin \cdot
\overline{\Force}\fin.
\end{cases}
\end{equation}
The Lagrange multipliers associated to the constraint forces are estimated as follows:
\begin{equation}
\label{eq:lambda}
\blambda^{*}=\alpha\, \left(\, \ddot{\bPhi}+
2 \, \xi\, \omega\, \dPhi+\omega^2 \,\bPhi \, \right)\,.
\end{equation}
\end{subequations}
\end{definition}
The cost function \eqref{eq:CostFunction-DAE} is formulated using the Lagrange multiplier estimates \eqref{eq:lambda}, i.e., using the trajectory cost function $\bg\left(\,t,\,\bq,\,\bv,\,\dbv,\,\lambda^{*},\,\brho\,\right)$.
Sensitivities \eqref{eq:sensitivity-of-solutions} of the position and velocity state variables of the system \eqref{eq:EOM-ODE-penalty} with respect to parameters evolve according to the tangent linear model derived in \cite{Sandu_2013_sensitivity_ODE_multibody,Sandu_2014_sensitivity_ODE_multibody,zhu2014mbsvt,Zhu_2014_PhD,Sandu2015dynamic,Sandu_2017_vehicle-optimization}:
\begin{equation}
\label{eq:TLM-ODE-penalty}
\begin{cases}
\dot{\bQ} &= \bV, \\
\overline{\Mass}\cdot \dot{\bV} &= 
\left( \overline{\Force}_\bq  - \overline{\Mass}_\bq \, \dbv\right)\cdot \bQ  
+\overline{\Force}_\bv\cdot  \bV 
+ \overline{\Force}_\brho -\overline{\Mass}_{\brho} \, \dbv,
\end{cases}
\qquad t_0 \le t \le t_F,
\end{equation}
with initial conditions given by Eq.~\eqref{eq:TLM-IC}. The derivatives 
$\overline{\Force}_\bq, \overline{\Force}_\bv, \overline{\Force}_\brho, \overline{\Mass}_{\brho},$ and $ \overline{\Mass}_\bq$ are given in  \ref{sec:AppendixA}.

\begin{definition}[Canonical ODE sensitivity system]
The canonical sensitivity ODE that computes the derivatives of the cost function with respect to the system parameters ρ for the smooth ODE penalty system Eq.~\eqref{eq:EOM-ODE-penalty} is the same than the ODE canonical system presented in \eqref{eq:canonical-ode-sensitivity} and extended to the cost function \eqref{eq:CostFunction-DAE} formulated using the Lagrange multiplier estimates.
\begin{eqnarray}
\label{eq:canonical-ode-penalty-sensitivity}
\begin{bmatrix}
\dbq \\ \dbv \\ \dbz \\ \big[ \dot{\bQ}_i \big]_{i=1,\dots,p}\\
\big[ \dot{\bV}_i \big]_{i=1,\dots,p}\\ 
\big[ \dbZ_i \big]_{i=1,\dots,p}
\end{bmatrix} =
\begin{bmatrix}
\bv \\
\feom\\
 \tilde{\bg}\\
\big[  \bV_i \big]_{i=1,\dots,p}\\
\big[ \feomq \cdot \bQ_i +
\feomv \cdot \bV_i +
\feomrhoi \big]_{i=1,\dots,p} \\
\big[ \big(g_\bq +  g_\dbv \cdot \feomq + g_{\blambda^{*}} \cdot \blambda_{\bq}^{*} \big)\cdot \bQ_{i}  
+  \big(g_\bv +  g_\dbv \cdot \feomv +g_{\blambda^{*}} \cdot \blambda_{\bv}^{*} \big)\cdot \bV_{i}+ 
\big(g_{\brho_i} +  g_\dbv\cdot\feomrhoi +  g_\dbv\cdot\blambda_{\brho}^{*}\big) \big]_{i=1,\dots,p}
\end{bmatrix},
\end{eqnarray}
where
\[
\feomq=\overline{\Mass}^{\rm -1}\left(\overline{\Force}_\bq-
\overline{\Mass}_{\bq} \dbv\right), \quad
\feomv=\overline{\Mass}^{\rm -1}\overline{\Force}_\bv,\quad
\feomrhoi=\overline{\Mass}^{\rm -1}\left(\overline{\Force}_\brho - \overline{\Mass}_\brho \dbv \right),
\]
and with the initial conditions given by Eq. \eqref{eq:TLM-IC}. The derivatives $\blambda_{\bq}^{*}, \blambda_{\bv}^{*}$ and  $ \blambda_{\brho}^{*}$ are given in \ref{sec:AppendixA}.
\end{definition}
\begin{remark} The sensitivity of the estimated Lagrange multipliers 
\begin{equation}
\label{eq:sensitivity-of-estimated-multipliers}
\Lambda^*(t,\brho) := \Drho\lambda^*(t) := \frac{d \lambda^*}{d \brho}(t, \brho) \in \mathds{R}^{m \times p}
\end{equation}
is calculated as:
		\begin{equation}
		\Lambda_i^{*}  =  \blambda_{\bq}^{*}  \; \bQ_i +\blambda_{\bv}^{*}\; \bV_i +\blambda_{\brho_i}^{*}, \quad i=1,\dots,p.
		\end{equation}
	\end{remark}

\section{Direct sensitivity analysis for hybrid constrained multibody systems}
\label{sec:multibody-constrained}

We now discuss constrained multibody systems when the dynamics is piecewise continuous in time.

\subsection{Coordinates partitioning for hybrid multibody systems}

The direct sensitivity analysis for a constrained rigid hybrid multibody dynamic system requires to find the jump conditions at the time of event. For this we need to distinguish between dependent and independent state variables and their sensitivities.

Assume that the Jacobian of the position constraint \eqref{eq:ConstraintsEq-position} has full row rank at a given configuration, $\operatorname{rank}(\bPhi_\bq)=m$. One can rearrange the columns and split the Jacobian in two submatrices:
\begin{equation}
\label{eqn:split-constraint-jacobian}
\bPhi_\bq \cdot \Permutation^{\rm T} = \big[ \bPhi_{\bq_{\rm dep}} ~~ \bPhi_{\bq_{\rm dof}} \big], \quad
 \bPhi_{\bq_{\rm dep}} \in \mathds{R}^{m \times m}, \quad
 \bPhi_{\bq_{\rm dof}} \in \mathds{R}^{m \times f}, \quad f = n-m,
\end{equation}
such that the first block $\bPhi_{\bq_{\rm dep}}$ is nonsingular. Here $\Permutation \in \mathds{R}^{n \times n}$ is a permutation matrix, obtained by permuting rows of identity matrix; the multiplication $\bPhi_\bq \cdot \Permutation$ performs a permutation of the columns of $\bPhi_\bq$.

By the implicit function theorem one can partition locally the position state variables into independent coordinates $\bq_{\rm dof}  \in \mathds{R}^{f}$ (the local `degrees of freedom' of the system) and dependent coordinates $\bq_{\rm dep} \in \mathds{R}^{m}$, and solve for the dependent ones in terms of the degrees of freedom:
\[
\bPhi(t,\bq) = \bzero \quad \textnormal{and} \quad \bPhi_{\bq_{\rm dep}}(t,\bq)~ \textnormal{nonsingular}
\quad \Rightarrow \quad
\bq_{\rm dep} = \zeta\bigl( t,\bq_{\rm dof}  \bigr).
\]
This induces a  corresponding local partitioning of the state variables into independent components $\bq_{\rm dof},\bv_{\rm dof}  \in \mathds{R}^{f}$ and dependent components $\bq_{\rm dep},\bv_{\rm dep} \in \mathds{R}^{m}$:
\begin{equation}
\label{eq:coordinate-partitioning}
\Permutation \cdot \bq =  \begin{bmatrix} \Permutation_{\rm dep} \\ \Permutation_{\rm dof}\end{bmatrix}  \cdot \bq = \begin{bmatrix} \bq_{\rm dep} \\ \bq_{\rm dof} \end{bmatrix}, \qquad
\Permutation \cdot \bv =  \begin{bmatrix} \Permutation_{\rm dep}  \\ \Permutation_{\rm dof}  \end{bmatrix} \cdot \bv = \begin{bmatrix} \bv_{\rm dep} \\ \bv_{\rm dof} \end{bmatrix},
\end{equation}
where $\Permutation_{\rm dep} \in \mathds{R}^{m \times n}$ and $\Permutation_{\rm dof} \in \mathds{R}^{f \times n}$ consist the first $m$ and the last $f$ rows of $\Permutation$, respectively.
Let:
\begin{equation}
\label{eq:R-matrix}
\Rez :=  -\bPhi_{\bq_{\rm dep}}^{-1}\,\bPhi_{\bq_{\rm dof}}  \in \mathds{R}^{m \times f}.
\end{equation}
The velocity constraint equation \eqref{eq:ConstraintsEq-velocity} becomes:
\begin{equation}
\label{eq:ConstraintsEq-velocity-dof}
 \bPhi_{\bq_{\rm dep}}\, \bv_{\rm dep} + \bPhi_{\bq_{\rm dof}}\, \bv_{\rm dof} = -\bPhi_t
\quad \Rightarrow \quad
 \bv_{\rm dep} = -\bPhi_{\bq_{\rm dep}}^{-1}\, \bigl( \bPhi_{\bq_{\rm dof}}\, \bv_{\rm dof} +  \bPhi_t \bigr)
 = \Rez\,\bv_{\rm dof}  - \bPhi_{\bq_{\rm dep}}^{-1}\,\bPhi_t. 
\end{equation}
Similarly, the acceleration constraint equation \eqref{eq:ConstraintsEq-acceleration} becomes:
\begin{equation}
\label{eq:ConstraintsEq-acceleration-dof}
\bPhi_{\bq_{\rm dep}}\, \dbv_{\rm dep} + \bPhi_{\bq_{\rm dof}}\, \dbv_{\rm dof} = \Faccel
\quad \Rightarrow \quad
\dbv_{\rm dep} = -\bPhi_{\bq_{\rm dep}}^{-1}\, \bigl( \bPhi_{\bq_{\rm dof}}\, \dbv_{\rm dof} - \Faccel \bigr)
= \Rez\,\dbv_{\rm dof} + \bPhi_{\bq_{\rm dep}}^{-1}\,\Faccel.
\end{equation}
From \eqref{eqn:split-constraint-jacobian}, \eqref{eq:coordinate-partitioning}, and \eqref{eq:R-matrix} we have that:
\[
\bPhi_{\bq_{\rm dof}}  = \bPhi_\bq \cdot \Permutation_{\rm dof}^{\rm T} \in \mathds{R}^{m \times f}, \quad
\bPhi_{\bq_{\rm dep}}  = \bPhi_\bq \cdot \Permutation_{\rm dep}^{\rm T} \in \mathds{R}^{m \times m}, \quad
\Rez = -\bigl(  \bPhi_\bq \cdot \Permutation_{\rm dep}^{\rm T}  \bigr)^{-1}\cdot \bPhi_\bq \cdot \Permutation_{\rm dof}^{\rm T}.
\]

%
\subsection{Representation of constrained hybrid  multibody systems}

The hybrid dynamics of a constrained mechanical system  refers to the smooth system defined in Section ~\ref{sec:multibody-smooth} subjected to a finite number of events, as discussed in Definition \ref{def:time-of-invent}. Each event \eqref{eq:event_function} happening at the `time of event' $\timp$ introduces a kink in the trajectory of the mechanical system. At each event the velocity state vector of an {\it unconstrained} system undergoes a jump \eqref{eq:jump-in-velocity} that can be arbitrary, i.e., can be described by any smooth function $h(\cdot)$. In case of a {\it constrained} system we need a more comprehensive understanding of the event.

\begin{definition}[Characterization of an event for constrained multibody systems]
\label{def:characterize-impulsive jump event} 
During an event at time $\timp$ a constrained mechanical system undergoes a sudden change in state characterized as follows:
\begin{itemize}
\item The constraints may change at the time of event (e.g., when a walking humanoid robot changes its supporting foot at each step). Consequently, the position constraint function \eqref{eq:ConstraintsEq-position}  changes from $\bPhi^- : \mathds{R}^{1+n+p} \to \mathds{R}^{m-}$ before invent to $\bPhi^+ : \mathds{R}^{1+n+p} \to \mathds{R}^{m+}$ after invent:
\begin{equation*}
\bPhi^-(t,\bq,\brho) \stackrel{\rm event}{\longrightarrow} \bPhi^+(t,\bq,\brho).
\end{equation*}
The two constraint functions are different, and in particular the number of constraints can differ, $m^+ \neq m^-$. 
\item The Jacobians of the position constraints  before and after the invent have full row ranks at the invent configuration $\qtimp$:
\[
\operatorname{rank}\big(\bPhi_\bq^-(t,\bq,\brho)\big)=m^{-}, \quad 
\operatorname{rank}\big(\bPhi_\bq^+(t,\bq,\brho)=m^{+}.
\]
\item Since the constraints can be different after and before the event, the partitions of variables into independent and dependent can also differ. We denote by $\square_\textnormal{dof-},\square_\textnormal{dep-}$ the independent and dependent components before the event, and by $\square_\textnormal{dof+},\square_\textnormal{dep+}$ the independent and dependent components after the event:
\begin{equation}
\label{eq:velocity-partitioning}
\Permutation^{-} \cdot \bv = \begin{bmatrix} \bv_\textnormal{dep-} \\ \bv_\textnormal{dof-} \end{bmatrix} \in \Re^n, \quad \bv_\textnormal{dof-} \in \Re^{f-}, 
\qquad
\Permutation^{+} \cdot \bv =   \begin{bmatrix} \bv_\textnormal{dep+} \\ \bv_\textnormal{dof+} \end{bmatrix} \in \mathds{R}^n, 
\quad\bv_\textnormal{dof+}\in \mathds{R}^{f+}.
\end{equation}
Here $\Permutation^{-}$ and $\Permutation^{+}$ are the permutation matrices that select the dependent and independent coordinates before and after the event, respectively. The dimensions of the velocity degrees of freedom vectors are  $f^{-} = n-m^{-}$ and $f^{+} = n-m^{+}$ before and after the event, respectively.

\item The generalized position state variables remain the same \eqref{eq:jump-in-position}, i.e., $\bq\afterimp = \bq\beforeimp = \qtimp$. Consequently, the state at the time of event $\qtimp$ need to satisfy both constraint functions:
\begin{equation}
\bPhi^-\beforeimp:=\bPhi^-\left(\timp,\qtimp, \brho \right) = 0,
\qquad
\bPhi^+\timpplus:=\bPhi^+\left(\timp,\qtimp, \brho \right) = 0.
\end{equation}
At the invent the system moves from one constraint manifold to another, and $\qtimp$ is on the intersection of the two manifolds.

\item The jump in velocity from right before the invent to right after the invent is defined in terms of the independent components, i.e., in terms of the velocity degrees of freedom:
\begin{equation}
\label{eq:jump-in-velocity-constrained-dof}
\bv_\textnormal{dof+}\afterimp =h \Big({\timp} , \qtimp, \bv_\textnormal{dof-}\beforeimp, {\brho} \Big), \qquad
h : \mathds{R}^{1+n+f^-+p} \to  \mathds{R}^{f^+}.
\end{equation}
The jump function \eqref{eq:jump-in-velocity-constrained-dof} is assumed to be smooth. Note that its formulation is not unique, since it depends on the selections of the degrees of freedom that are not unique. 

\item The velocity state vectors satisfy the velocity kinematic constraints \eqref{eq:ConstraintsEq-velocity-dof}. Consequently, the jumps in velocity \eqref{eq:jump-in-velocity} cannot be arbitrary for the dependent components. The dependent components of velocity are obtained from solving the velocity constraints \eqref{eq:ConstraintsEq-velocity-dof}:
\begin{equation}
\label{eq:jump-in-velocity-constrained-dep}
\begin{split}
 \bv_{\rm dep+}\afterimp &= -\left(\bPhi^+_{\bq_{\rm dep+}}\afterimp\right)^{-1}\cdot \left( \bPhi^+_{\bq_{\rm dof+}}\afterimp\, \bv_{\rm dof+}\afterimp  +  \bPhi^+_t\afterimp \right) \\
 &= \Rez^+\afterimp\,  \bv_{\rm dof+}\afterimp -\left(\bPhi^+_{\bq_{\rm dep+}}\afterimp\right)^{-1}\cdot   \bPhi^+_t\afterimp.
 \end{split}
\end{equation}
Here $\Rez^\pm$ are the matrices corresponding to the constraints $\bPhi^\pm$.
\end{itemize}
\end{definition} 

\begin{remark}[Collision events]
\label{rem:jump-in-collision}
The proposed formalism \eqref{eq:jump-in-velocity-constrained-dof}--\eqref{eq:jump-in-velocity-constrained-dep} covers the case of elastic contact/collision/impact without change in the set of constraint equations,  $\bPhi^+\equiv\bPhi^-$. The impulsive (external) contact forces act to change the independent components of the velocity state \eqref{eq:jump-in-velocity-constrained-dof}.
\end{remark}	

%

\begin{remark}[Hybrid DAE jump formulation]
\label{rem:jump-in-robotics}
The proposed formalism \eqref{eq:jump-in-velocity-constrained-dof}--\eqref{eq:jump-in-velocity-constrained-dep} also covers the case where the event consists solely of a change of constraints $\bPhi^+\ne\bPhi^-$, without any external force to modify the independent velocities. This type of event appears mainly in the humanoid robotics field where general and relative coordinates are used and inelastic collisions are considered. A popular approach in robotics  is to solve for the DAE involving impulsive forces in the constraints at the time of event  \cite{Kolathaya2016, Posa2013}:
\begin{subequations}
\label{eq:DAE-impulse}
\begin{equation}
\label{eq:DAE-impulse-1}
\begin{bmatrix}
{\Mass\atimp} & (\dPhidq^+)^{\rm T}\atimp \\
\dPhidq^+\atimp & \bzero
\end{bmatrix}
\cdot
\begin{bmatrix}
\vplus \\ \delta\lambda
\end{bmatrix}
=
\begin{bmatrix}
 \Mass\atimp \cdot \vminus \\
-\bPhi_t^+\atimp 
\end{bmatrix},
\end{equation}
or equivalently,
\begin{equation}
\label{eq:DAE-impulse-2}
\begin{bmatrix}
\vplus \\ \delta\lambda
\end{bmatrix}
=
\begin{bmatrix}
{\Mass\atimp} & (\dPhidq^+)^{\rm T}\atimp \\
\dPhidq^+\atimp & \bzero
\end{bmatrix}^{-1}
\cdot
\begin{bmatrix}
 \Mass\atimp \cdot \vminus \\
-\bPhi_t^+\atimp 
\end{bmatrix}=
\begin{bmatrix}
\fdaeimpv {\left(\,\timp,\,\qtimp ,\,\vminus,\,\brho \, \right)} \\
\fdaeimpl {\left(\,\timp,\,\qtimp ,\,\vminus,\,\brho \, \right)}
\end{bmatrix}.
\end{equation}
\end{subequations}
Here $\vplus$ contains both the independent and dependent coordinates. We see that the second equation in \eqref{eq:DAE-impulse-1} automatically imposes the velocity constraint \eqref{eq:ConstraintsEq-velocity}. 

Our formalism covers this approach by defining the jump function given by Eq. \eqref{eq:jump-in-velocity-constrained-dof} as:
\[
\bv_{\rm dof+}\afterimp = \Permutation_{\rm dof+}\,\fdaeimpv {\left(\,\timp,\,\qtimp ,\,\vminus,\,\brho \, \right)}
=: h\left(\,\timp,\,\qtimp ,\,\bv_{\rm dof-}\beforeimp,\,\brho \, \right).
\]
%
\end{remark} 

\subsection{The jump in the sensitivity of the position state vector}
The jump conditions at the time of event in the sensitivity state vector for a constrained rigid multibody involve finding the sudden change in values of the sensitivity with respect to the system parameters $\brho$ of the position and the dependent and independent velocity state variables due the impulsive jump of the independent velocity state variables. 

\begin{remark}[Partitioning of sensitivity matrices]
The partitioning of state variables into dependent and independent \eqref{eq:velocity-partitioning} induces a similar partitioning of the sensitivity matrices \eqref{eq:sensitivity-of-solutions}:
\begin{equation}
\label{eq:sensitivity-partitioning}
\Permutation \cdot \bQ = \begin{bmatrix} \bQ_\textnormal{dep} \\ \bQ_\textnormal{dof} \end{bmatrix} \in \mathds{R}^{n \times p}, \quad \bQ_\textnormal{dof} \in \mathds{R}^{f \times p}, \qquad
\Permutation \cdot \bV = \begin{bmatrix} \bV_\textnormal{dep} \\ \bV_\textnormal{dof} \end{bmatrix} \in \mathds{R}^{n \times p}, 
\quad\bV_\textnormal{dof} \in \mathds{R}^{f \times p}.
\end{equation}
Differentiation of the position constraint equation \eqref{eq:ConstraintsEq-position} with respect to the system parameters $\brho$ gives:
\[
\bzero  = \frac{d \bPhi(t,\bq(t,\brho),\brho)}{d \brho} = \bPhi_\bq \cdot \bQ +  \bPhi_\brho = 
\bPhi_{\bq_\textnormal{dep}} \cdot \bQ_\textnormal{dep} + \bPhi_{\bq_\textnormal{dof}} \cdot \bQ_\textnormal{dof} +  \bPhi_\brho,
\]
and therefore:
\begin{equation}
\label{eq:dependent-Q}
\bQ_\textnormal{dep}= -\bPhi_{\bq_\textnormal{dep}}^{-1}\, \left( \bPhi_{\bq_\textnormal{dof}} \cdot \bQ_\textnormal{dof} +\bPhi_\brho \right) = \Rez \cdot \bQ_\textnormal{dof} - 
 \bPhi_{\bq_\textnormal{dep}}^{-1}\, \bPhi_\brho.
\end{equation}
Similarly, differentiation of the velocity constraint equation \eqref{eq:ConstraintsEq-velocity} with respect to the system parameters gives:
\begin{eqnarray*}
\bzero &=& \frac{d}{d \brho} \left( \dPhidq(t,\bq(t,\brho),\brho) \, \bv(t,\brho) + \bPhi_t(t,\bq(t,\brho),\brho) \right) \\
&=& \bPhi_{\bq} \cdot \bV + (\bPhi_{\bq,\bq}\,\bv + \bPhi_{\bq,t}) \cdot \bQ + \bPhi_{\brho,\bq}\,\bv + \bPhi_{\brho,t} \\
&=& \bPhi_{\bq_\textnormal{dep}} \cdot \bV_\textnormal{dep} + \bPhi_{\bq_\textnormal{dof}} \cdot \bV_\textnormal{dof}
 + \big(\bPhi_{\bq,\bq}\,\bv +  \bPhi_{\bq,t} \big) \cdot \bQ 
+  \bPhi_{\brho,\bq} \, \bv  + \bPhi_{\brho,t},
 \end{eqnarray*}
and therefore:
\begin{equation}
\label{eq:dependent-V}
\bV_\textnormal{dep} = \Rez \cdot \bV_\textnormal{dof} 
- \bPhi_{\bq_\textnormal{dep}}^{-1}
 \left(\big(\bPhi_{\bq,\bq}\,\bv +  \bPhi_{\bq,t} \big) \cdot \bQ 
 +  \bPhi_{\brho,\bq} \, \bv  + \bPhi_{\brho,t}
 \right).
\end{equation}

\end{remark}

\begin{remark}[Sensitivity of the time of event for constrained systems]
The time of event depends only on the position state and on the event function \eqref{eq:event_function}. Consequently, the sensitivity of the time of event for constrained systems is the same as for unconstrained systems, and is given by Eqn.  \eqref{eq:sensitivity-time-of-invent} in Theorem \ref{thm:sensitivity-time-of-invent}.
\end{remark}

\begin{theorem}[Jump in position sensitivity for constrained system] 
\label{thm:jump-in-position-sensitivity-constrained}
Let $\bQ\afterimp$ and $\bQ\beforeimp \in \mathds{R}^{n{\times}p}$  be the sensitivities of the generalized position state vectors right after and right before the event, respectively.  The independent components of the sensitivity of the generalized positions right after the event are:
\begin{subequations}
\label{eq:jump-position-sensitivity-constrained}
\begin{equation}
\label{eq:jump-position-sensitivity-constrained-dof}
\bQ_\textnormal{dof+}\afterimp = \bQ_\textnormal{dof+}\beforeimp  - \bigg( \bv_\textnormal{dof+}\afterimp - \bv_\textnormal{dof+}\beforeimp \bigg) \cdot \frdtdrho.
\end{equation}
The dependent components of the sensitivity of the generalized positions right after the event are given by equation \eqref{eq:dependent-Q}, using the after-event constraints:
\begin{equation}
\label{eq:jump-position-sensitivity-constrained-dep}
\bQ_\textnormal{dep+}\afterimp =  \Rez^+\afterimp \cdot \bQ_\textnormal{dof+}\afterimp - \left. \left(\bPhi^+_{\bq_\textnormal{dep+}}\afterimp\right)^{-1}\, \bPhi^+_\brho \right\afterimp.
\end{equation}
\end{subequations}
\end{theorem}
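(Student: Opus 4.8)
The plan is to treat the independent and the dependent blocks separately, each reusing a result already established in the excerpt. For the independent components \eqref{eq:jump-position-sensitivity-constrained-dof}, I would first observe that the constrained event retains the two features on which the proof of Theorem~\ref{thm:jump-in-position-sensitivity} rested: the generalized position is continuous across the event, $\qplus=\qminus=\qtimp$, while the generalized velocity jumps from $\vminus$ to $\vplus$. The twin-perturbed-system argument of Theorem~\ref{thm:jump-in-position-sensitivity} therefore applies unchanged to the full position state and gives $\bQ\afterimp=\bQ\beforeimp-\bigl(\vplus-\vminus\bigr)\cdot\frdtdrho$. I would then left-multiply by the operator $\Permutation^{+}_{\rm dof}$ selecting the after-event degrees of freedom and use $\Permutation^{+}_{\rm dof}\,\vplus=\bv_\textnormal{dof+}\afterimp$ together with $\Permutation^{+}_{\rm dof}\,\vminus=\bv_\textnormal{dof+}\beforeimp$, which produces \eqref{eq:jump-position-sensitivity-constrained-dof}.

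For the dependent components \eqref{eq:jump-position-sensitivity-constrained-dep}, the point to exploit is that the after-event position constraint $\bPhi^{+}\bigl(t,\bq(t,\brho),\brho\bigr)=\bzero$ holds identically for times immediately after the event and for every parameter value. Differentiating this identity with respect to $\brho$ and evaluating at the after-event state $\afterimp$ is exactly the computation that produced \eqref{eq:dependent-Q}, now carried out with the after-event constraint $\bPhi^{+}$ and its associated matrix $\Rez^{+}$; this delivers \eqref{eq:jump-position-sensitivity-constrained-dep} directly. In this way the dependent sensitivity is obtained algebraically from the already-computed independent sensitivity $\bQ_\textnormal{dof+}\afterimp$, with no further limiting argument required.

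The step I expect to require the most care is the consistency of the two descriptions of the dependent block. Permuting the full kinematic jump $\bQ\afterimp=\bQ\beforeimp-(\vplus-\vminus)\cdot\frdtdrho$ into the dependent coordinates would also furnish an expression for $\bQ_\textnormal{dep+}\afterimp$, and one must confirm that it coincides with \eqref{eq:dependent-Q} evaluated at $\afterimp$. Both represent the same quantity---one from kinematics, the other from the constraint that the trajectory satisfies right after the event---so they are bound to agree; checking this explicitly, using that $\qtimp$ lies on the intersection of the two constraint manifolds and that $\bQ\beforeimp$ satisfies the before-event differentiated constraint, is the only genuine verification in the argument. A minor, purely notational point to settle is that pre-event quantities carrying an after-event partition label, such as $\bQ_\textnormal{dof+}\beforeimp$ and $\bv_\textnormal{dof+}\beforeimp$, are well defined: they are the fixed permutation $\Permutation^{+}$ applied to the left limits $\bQ\beforeimp$ and $\vminus$, which exist by the smoothness of the dynamics up to the event.
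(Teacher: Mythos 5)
Your proposal is correct and follows essentially the same route as the paper: the independent block is obtained by running (or projecting) the twin-perturbed-system argument of Theorem~\ref{thm:jump-in-position-sensitivity}, and the dependent block comes from differentiating the after-event position constraint, i.e., equation~\eqref{eq:dependent-Q} with $\bPhi^{+}$ and $\Rez^{+}$. The paper's own proof is just a two-line reference to those two ingredients, and the projection form you derive is exactly what the paper records in the remark immediately following the theorem; your added consistency check (which succeeds because $\qtimp$ lies on the intersection of the two constraint manifolds for every nearby $\brho$) is a worthwhile extra that the paper omits.
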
  
\begin{pf} 
The proof of the jump in the independent coordinates \eqref{eq:jump-position-sensitivity-constrained-dof} follows closely the proof of Theorem \ref{thm:jump-in-position-sensitivity}. The equation for dependent coordinates \eqref{eq:jump-position-sensitivity-constrained-dep} follows from the linearized position constraint equation \eqref{eq:dependent-Q}.
\qed
\end{pf}

\begin{remark}
From \eqref{eqn:split-constraint-jacobian} and \eqref{eq:coordinate-partitioning} we can rewrite \eqref{eq:jump-position-sensitivity-constrained-dof} as:
\begin{equation}
\Permutation^+_\textnormal{dof+} \cdot \bigg(\bQ\afterimp - \bQ\beforeimp  \bigg)
= - \Permutation^+_\textnormal{dof+} \cdot \bigg( \bv\afterimp - \vminus\bigg) \cdot \frdtdrho.
\end{equation}
\end{remark}

\subsection{The jump in the sensitivity of the velocity state vector}
%
\begin{theorem}[Jump in velocity sensitivity for a constrained system]  
\label{thm:jump-in-velocity-sensitivity-constrained}
Let $\bV\afterimp, \bV\beforeimp \in \mathds{R}^{n{\times}p}$ be the sensitivity matrices of the generalized velocity state vectors after and before the invent, respectively. The independent coordinates of the velocity sensitivities right after the event are given by:
\begin{subequations}
\label{eq:jump-in-velocity-sensitivity-constrained}
%
\begin{equation}
\label{eq:jump-in-velocity-sensitivity-constrained-dof}
\begin{split}
\bV_\textnormal{dof+}\afterimp&= h_{\bq}\beforeimp \cdot \qrhominus 
+h_{\bv_\textnormal{dof-}}\beforeimp \cdot  \bV_\textnormal{\rm dof-}\beforeimp 
+\Bigl( h_{\bq}\beforeimp\cdot \vminus-\ddbq_\textnormal{dof+}\afterimp +h_{\bv_\textnormal{dof-}}\beforeimp\cdot \ddbq_\textnormal{dof-}\beforeimp {+ h_t\beforeimp} \Bigr) \cdot \frdtdrho
{+ h_\brho\beforeimp},
\end{split}
\end{equation}
where the Jacobians of the jump function are:
\[
\begin{split}
h_{\bq}\beforeimp &:= \frac{\partial h}{\partial\bq}\big( \bq\atimp,\bv_\textnormal{dof-}\beforeimp, \brho \big) \in \Re^{f^+ \times n},
\qquad
h_{\bv_\textnormal{dof-}}\beforeimp := \frac{\partial h}{\partial\bv_\textnormal{dof-}}\big( \bq\atimp,\bv_\textnormal{dof-}\beforeimp, \brho \big) \in \Re^{f^+ \times f^-}.
\\
h_t\beforeimp &:= \frac{\partial h}{\partial t}\big( \bq\atimp,\bv_\textnormal{dof-}\beforeimp, \brho \big) \in \Re^{f}, \qquad \;\;\;\;
h_\brho\beforeimp := \frac{\partial h}{\partial\brho}\big( \bq\atimp,\bv_\textnormal{dof-}\beforeimp, \brho \big)\in \Re^{f \times p}.
\end{split}
\]
The dependent components  of the velocity sensitivities right after the event are calculated via \eqref{eq:dependent-V}, using the after-event constraints:
\begin{equation}
\label{eq:jump-in-velocity-sensitivity-constrained-dep}
\bV_\textnormal{dep+}\afterimp =
\left. - \big( \bPhi^+_{\bq_\textnormal{dep+}} \afterimp \big)^{-1}
 \left(
\bPhi^+_{\bq_\textnormal{dof+}} \cdot \bV_\textnormal{dof+} 
 + \big(\dPhidqqplus \,\bv +  \dPhidtdqplus \big) \cdot \bQ 
 +  \dPhidqdrhoplus \, \bv  + \dPhidtdrhoplus
 \right)\right\afterimp.
\end{equation}
\end{subequations}
\end{theorem}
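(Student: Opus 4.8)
The plan is to establish the two blocks of the statement by different means: the independent (degree-of-freedom) block \eqref{eq:jump-in-velocity-sensitivity-constrained-dof} by a twin-perturbed-trajectory limit that mirrors the unconstrained Theorem \ref{thm:jump-in-velocity-sensitivity}, and the dependent block \eqref{eq:jump-in-velocity-sensitivity-constrained-dep} by a purely algebraic differentiation of the after-event velocity constraint.

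For the independent components I would reuse the twin perturbed systems of Definition \ref{def:perturbed-twins}, now tracking only the independent velocity coordinates. The constrained jump \eqref{eq:jump-in-velocity-constrained-dof} plays the role of the unconstrained jump in that proof: it takes the before-event degrees of freedom $\bv_\textnormal{dof-}$ as input and returns the after-event degrees of freedom $\bv_\textnormal{dof+}$, while the (continuous) position still enters through the full vector $\bq\atimp$. I would write the after-event independent velocities of both twins at the common time $\tau_2$, Taylor-expand the second twin's jump about the first twin's pre-event state $\big(\tau_1,\qoneTauone,\voneTauoneMinus,\brho\big)$ to first order in $\delta\tau$ and $\delta\brho$ (exact, since both are infinitesimal), subtract, scale by $\delta\brho$, and pass to the limit $\delta\brho\to 0$ using \eqref{eqn:twin-system-limits}. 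The two acceleration terms appear exactly as in Theorem \ref{thm:jump-in-velocity-sensitivity}, because the twins reach the event at the slightly different instants $\tau_1$ and $\tau_2$: propagating the first twin's post-jump state from $\tau_1$ to $\tau_2$ contributes $\ddbq_\textnormal{dof+}\afterimp$, while propagating the second twin's pre-event state contributes $\ddbq_\textnormal{dof-}\beforeimp$. Collecting terms and identifying $\delta\tau/\delta\brho\to\frdtdrho$ gives \eqref{eq:jump-in-velocity-sensitivity-constrained-dof}.

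The dependent components require no further limiting argument. Immediately after the event the full velocity $\vplus$ must satisfy the after-event velocity kinematic constraint \eqref{eq:ConstraintsEq-velocity}, namely $\bPhi^+_\bq\,\vplus=-\bPhi^+_t$. Differentiating this identity with respect to $\brho$ is precisely the computation that produced \eqref{eq:dependent-V}, only with the after-event constraint $\bPhi^+$ and its Jacobians evaluated at the event; it expresses $\bV_\textnormal{dep+}\afterimp$ in terms of the independent block $\bV_\textnormal{dof+}\afterimp$ just obtained and the position sensitivity $\bQ\afterimp$ from Theorem \ref{thm:jump-in-position-sensitivity-constrained}. This is \eqref{eq:jump-in-velocity-sensitivity-constrained-dep}.

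The main obstacle is bookkeeping rather than analysis. One must consistently separate before- and after-event quantities when the coordinate partition itself changes at the event, so that $f^-\neq f^+$ and $\Permutation^-\neq\Permutation^+$; in particular the jump Jacobians have mismatched shapes, $h_\bq\beforeimp\in\Re^{f^+\times n}$ acting on the full continuous position while $h_{\bv_\textnormal{dof-}}\beforeimp\in\Re^{f^+\times f^-}$ acts only on the before-event degrees of freedom. One must also verify that the two acceleration contributions are the correct projections, $\ddbq_\textnormal{dof+}\afterimp$ from the after-event dynamics and $\ddbq_\textnormal{dof-}\beforeimp$ from the before-event dynamics, rather than projections of a single acceleration.
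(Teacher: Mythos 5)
Your proposal is correct and follows essentially the same route as the paper, which itself only states that the independent block follows the proof of Theorem~\ref{thm:jump-in-velocity-sensitivity} via the twin perturbed systems of Definition~\ref{def:perturbed-twins}, and that the dependent block follows from the linearized velocity constraint \eqref{eq:dependent-V} evaluated with the after-event constraints. You have in fact supplied more detail than the paper does, and your identification of the two acceleration terms as projections of the after-event and before-event dynamics, respectively, is consistent with the limit relations used in the unconstrained proof.
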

\begin{pf}
The proof of the jump in the independent coordinates \eqref{eq:jump-in-velocity-sensitivity-constrained-dof} follows closely the proof of Theorem \ref{thm:jump-in-velocity-sensitivity}. The equation for dependent coordinates \eqref{eq:jump-in-velocity-sensitivity-constrained-dep} follows from the linearized velocity constraint equation \eqref{eq:dependent-V}.
\qed
\end{pf} 

\subsection{The jump in the sensitivity of the velocity state vector using the hybrid DAE jump formulation}

Consider the case of a sudden change in constraints discussed in Remark \ref{rem:jump-in-robotics}. 
The jump in the velocity sensitivity for constrained system due to impulsive forces is determined as follows:
\[
\begin{split}
\begin{bmatrix}
{\Mass\afterimp} & \dPhidqplus^{\rm T}\afterimp \\
\dPhidqplus\afterimp & \bzero
\end{bmatrix}
\cdot
\begin{bmatrix}
\dqrhoplus \\ \delta\Lambda
\end{bmatrix}
=&
-\begin{bmatrix}
 {\Mass\afterimp}_{\bq}\afterimp \cdot (\vplus - \vminus) + \dPhidqqplus^{\rm T}\afterimp \cdot \delta\lambda \\
 \dPhidqqplus\afterimp \cdot \vplus
\end{bmatrix} \cdot \qrhoplus +
\begin{bmatrix}
\Mass\afterimp \\ 
\bzero
\end{bmatrix} \cdot \dqrhominus \\&
-\begin{bmatrix}
 {\Mass}_{\brho}\afterimp \cdot \vplus +  \dPhidqdrhoplus ^{\rm T}\afterimp \cdot \delta\lambda \\
 \dPhidqdrhoplus\afterimp \cdot \vplus + \dPhidtdqplus\afterimp \cdot \vminus
 + \dPhidtdvplus\afterimp \cdot \vminus  + \dPhidtdrhoplus\afterimp \cdot \vminus
\end{bmatrix},
\end{split}
\]
or equivalently:
\[
\begin{split}
\begin{bmatrix}
\dqrhoplus \\ \delta\Lambda
\end{bmatrix}
=&
-
\begin{bmatrix}
{\Mass} & \dPhidqplus^{\rm T} \\
\dPhidqplus & \bzero
\end{bmatrix}^{-1}
\begin{bmatrix}
 {\Mass}_{\bq} \cdot (\vplus - \vminus) + \dPhidqqplus^{\rm T} \cdot \delta\lambda \\
 \dPhidqqplus \cdot \vplus
\end{bmatrix}\cdot \qrhoplus +
\begin{bmatrix}
{\Mass} & \dPhidqplus^{\rm T} \\
\dPhidqplus & \bzero
\end{bmatrix}^{-1}
\begin{bmatrix}
\Mass \\ 
\bzero
\end{bmatrix} \cdot \dqrhominus
\\& -
\begin{bmatrix}
{\Mass} & \dPhidqplus^{\rm T} \\
\dPhidqplus & \bzero
\end{bmatrix}^{-1}
\begin{bmatrix}
 {\Mass}_{\brho} \cdot \vplus +  \dPhidqdrhoplus^{\rm T} \cdot \delta\lambda \\
 \dPhidqdrhoplus \cdot \vplus  + \dPhidtdrhoplus \cdot \vminus
\end{bmatrix}
-
\begin{bmatrix}
{\Mass} & \dPhidqplus^{\rm T} \\
\dPhidqplus & \bzero
\end{bmatrix}^{-1}
\begin{bmatrix}
 \bzero \\
  \dPhidtdqplus \cdot \vminus
 + \dPhidtdvplus \cdot \vminus  
\end{bmatrix},
\end{split}
\]
which simplifies to:
\[
\begin{bmatrix}
\dqrhoplus \\ \delta\Lambda
\end{bmatrix}
=
\fun_\bq^{\scalebox{0.6}{\rm DAE-imp}} \cdot \qrhoplus 
+\fun_{\vminus}^{\scalebox{0.6}{\rm DAE-imp}} \cdot \dqrhominus
+ \fun_{\brho}^{\scalebox{0.6}{\rm DAE-imp}} 
+ \fun_{t}^{\scalebox{0.6}{\rm DAE-imp}} 
\]

\subsection{The jump in the sensitivity of the Lagrange multipliers}

\begin{remark}
\label{rem:jump in DAE Lagrange multipliers}
When the DAE formalism is selected to model the smooth dynamics of a constrained mechanical system, the jump in the sensitivity of the Lagrange multipliers \eqref{eq:sensitivity-of-multipliers} from $\Lambda\beforeimp \to \Lambda\afterimp$  at the time of event is:
\begin{equation}
\Lambda_i\afterimp  =\Lambda_i\beforeimp  +\fdaelbq \afterimp \; \bQ_i\afterimp +\fdaelbv\afterimp \; \bV_i\afterimp +\fdaelbrhoi\afterimp, \quad i=1,\dots,p
\end{equation}
 \end{remark}

\begin{remark} 
\label{rem:jump in estimated Lagrange multipliers}
When the ODE penalty formalism is selected to model the smooth dynamics of a constrained mechanical system, the jump in the sensitivity of the estimated Lagrange multipliers \eqref{eq:sensitivity-of-estimated-multipliers} from $\Lambda^{*}\beforeimp \to \Lambda^{*}\afterimp$  at the time of event is:
\begin{equation}
\Lambda_i^{*}\afterimp  =\Lambda_i^{*}\beforeimp  +\blambda_{\bq}^{*} \afterimp \; \bQ_i\afterimp +\blambda_{\bv}^{*}\afterimp \; \bV_i\afterimp +\blambda_{\brho_i}^{*}\afterimp, \quad i=1,\dots,p
\end{equation}
\end{remark}

\subsection{The sensitivity of the cost function for hybrid systems}

\begin{remark}
\label{rem:jump-in-the-cost-function}	
The formalism that computes the sensitivities of the cost function with respect to parameters for hybrid systems does not change from the formalism presented for smooth systems illustrated in Remark \ref{rem:sensitivities-of-the-cost-function}. Indeed, the sensitivities of the cost function sum all the sensitivities of the trajectories and the quadrature variables evaluated at the final time. Any jump in the sensitivities of the trajectories and  quadrature variables were anteriorly computed. The jump in the sensitivities of the quadrature variables are given by \eqref{eq:jump-in-quadrature-sensitivity}.
\end{remark}

\section{Direct sensitivity analysis for constrained mechanical systems with transition functions}
\label{sec:Direct sensitivity analysis with transition functions}

The transition function refers to a sudden change of the governing function or vector field. In this section we discuss about direct sensitivity analysis for constrained mechanical with jump-discontinuity in the acceleration caused by a sudden change of the equation of motions at the time of event. 
\begin{definition}[Change of EOM at the time of event]  
Unlike previous methodology, the ODE penalty formulation Eq.~\eqref{eq:EOM-ODE-penalty} incorporates the kinematic constraints (position, velocity, and acceleration constraint equations) into the equation of motions and stabilize them over time. Therefore, any change in the set of kinematic constraints involves a change in the equation of motions, thus, a change in the acceleration vector {blue}{(or right-hand side function)}. Because  the ODE penalty formulation is a control based constraint stabilization method, the position constraint is not satisfied exactly right after the sudden change in the set of kinematic constraints:
\begin{equation}
\bPhi^-\beforeimp = 0 , \quad \bPhi^+\timpplus \neq 0.
\end{equation}
This differs from  \eqref{eq:DAE-impulse} as there are no instantaneous kinematic jump in the velocity state variable. 
\end{definition}

\begin{theorem}[Jump in the velocity sensitivity for constrained system due to the change of equation of motions]  
\label{thm:jump-in-velocity-sensitivity-constrained-EOMchanged}
Let $\bV\afterimp, \bV\beforeimp \in \mathds{R}^{n{\times}p}$ be the sensitivity matrices of the generalized velocity state vectors after and before the invent, respectively. Let the event characterized as a change of equation of motions due to the change of constraints including in the equation of motions. The sensitivities of the independent  velocities right after the event are given by
\begin{equation}
\bV_\textnormal{dof-}\afterimp =\bV_\textnormal{dof-}\beforeimp + \left(  \ddbq_\textnormal{dof-}\beforeimp - \ddbq_\textnormal{dof-}\afterimp \right) \cdot \frdtdrho.
\end{equation}
\end{theorem}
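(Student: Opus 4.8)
The plan is to recognize this event as a special case of Theorem~\ref{thm:ODE-change-in-acceleration} applied to the penalty ODE formulation, and then to read off the independent components. First I would observe that in the penalty formulation \eqref{eq:EOM-ODE-penalty} the full velocity vector $\bv$ is the state of a genuine (unconstrained) second order ODE with right-hand side $\feom = \overline{\Mass}^{-1}\,\overline{\Force}$; the kinematic constraints enter only through the penalty terms in $\overline{\Mass}$ and $\overline{\Force}$. A change of constraints $\bPhi^-\to\bPhi^+$ therefore changes $\overline{\Mass}$ and $\overline{\Force}$, and hence the vector field, producing a jump in the acceleration from $\ddbq\beforeimp=\feom^-\atimp$ to $\ddbq\afterimp=\feom^+\atimp$. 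Crucially, the event applies no impulsive force: it modifies the governing equations, not the state itself, so the velocity is continuous, $\vplus=\vminus$, and the associated jump function \eqref{eq:jump-in-velocity} is the identity.

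Since the velocity jump function is the identity we have $h_\bq\beforeimp=\bzero$, $h_\bv\beforeimp=\bI$, $h_t\beforeimp=\bzero$, and $h_\brho\beforeimp=\bzero$. This is exactly the hypothesis of Theorem~\ref{thm:ODE-change-in-acceleration}, whose conclusion \eqref{eq:jump-in-velocity-sensitivity-changeacc} applied to the full velocity sensitivity matrix gives
\[
\bV\afterimp = \bV\beforeimp - \bigl( \ddbq\afterimp - \ddbq\beforeimp \bigr)\cdot\frdtdrho
= \bV\beforeimp + \bigl( \ddbq\beforeimp - \ddbq\afterimp \bigr)\cdot\frdtdrho .
\]
Here the sensitivity of the time of event $\frdtdrho$ is the one computed in Theorem~\ref{thm:sensitivity-time-of-invent}; it is unaffected by the change of EOM because it depends only on the position state and the event function \eqref{eq:event_function}, both of which are continuous at $\timp$.

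Finally I would extract the independent components before the event by left-multiplying the displayed identity by the selection block $\Permutation^-_\textnormal{dof-}$ of the before-event permutation matrix (cf.\ \eqref{eq:velocity-partitioning}); by associativity of the matrix product the factor $\frdtdrho$ stays on the right, and using $\Permutation^-_\textnormal{dof-}\,\bV=\bV_\textnormal{dof-}$ and $\Permutation^-_\textnormal{dof-}\,\ddbq=\ddbq_\textnormal{dof-}$ yields the claimed relation
\[
\bV_\textnormal{dof-}\afterimp = \bV_\textnormal{dof-}\beforeimp + \bigl( \ddbq_\textnormal{dof-}\beforeimp - \ddbq_\textnormal{dof-}\afterimp \bigr)\cdot\frdtdrho .
\]
The only genuinely delicate step is the first one: one must justify that the penalty event is of ``change-the-acceleration-only'' type, i.e.\ that the velocity remains continuous even though the constraint set changes and $\bPhi^+\timpplus\neq 0$. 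This is precisely the distinction emphasized in the preceding definition --- unlike the DAE impulse \eqref{eq:DAE-impulse}, no instantaneous kinematic jump is imposed --- so once continuity of $\bv$ is established the remainder reduces to an application of the unconstrained result and a projection onto the independent coordinates.
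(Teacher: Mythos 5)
Your proposal is correct and follows essentially the same route as the paper, which simply notes that the independent-coordinate jump ``follows closely the proof of Theorem~\ref{thm:ODE-change-in-acceleration}''; you fill in the same chain (identity jump function $\Rightarrow$ acceleration-only event $\Rightarrow$ projection onto the before-event degrees of freedom via $\Permutation^-_\textnormal{dof-}$) in more detail. Your added justification that the penalty formulation produces no kinematic velocity jump, and that $\frdtdrho$ is unaffected, is a useful elaboration of what the paper leaves implicit.
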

\begin{proof}
The proof of the jump in the independent coordinates (58a) follows closely the proof of Theorem \ref{thm:ODE-change-in-acceleration}.
\end{proof}
\begin{remark}
The sensitivities of the dependent  velocities right after the event are given by \eqref{eq:jump-in-velocity-sensitivity-constrained-dep}. As well, the sensitivities of the position right after the event for the independent and dependent variables are given by \eqref{eq:jump-position-sensitivity-constrained-dof} and  \eqref{eq:jump-position-sensitivity-constrained-dep}, respectively.
\end{remark}

\begin{remark}
The sudden change in the equation of motions at the event time is caused by a sudden change of forces acting on the system, such as constraint forces, friction forces, or a change of masses. The proposed formalism to calculate the jump conditions for systems{blue}{ with discontinuous right-hand sides} remains valid for any type of change of the equation of motions. 
\end{remark}

\begin{remark} The proposed formalism in calculating the jump conditions for systems with jerk discontinuity incorporates  Remarks  \ref{rem:jump in estimated Lagrange multipliers} and \ref{rem:jump-in-the-cost-function}. 
\end{remark}

\section{Case study: sensitivity analysis of a five-bar mechanism}
\label{sec:numerics}
The five-bar mechanism with two degrees of freedom, shown in Fig.~\ref{Fig_fivebar}, is used as a case study to illustrate the sensitivity analysis approach for hybrid constrained multibody systems developed herein.

The mechanism has five revolute joints located at points A, 1, 2, 3, and B; the masses of the bars are $m_1=1 \; kg$, $m_2=1.5 \; kg$, $m_3=1.5 \; kg$, $m_4=1 \; kg$; the polar moments of inertia are assumed to be ideal, with uniform distribution of mass; the two springs have stiffness coefficients of $k_1=k_2=100 \; N/m$ and natural lengths of $L_{01}=2.2360 \; m$ and $L_{02}=2.0615 \; m$.

The state vector includes the natural coordinates of the point 1, 2, and 3 of the mechanism with $\bq= \big[  \bq^{\rm T}_{1} ~ \bq^{\rm T}_{2} ~ \bq^{\rm T}_{3} \big]^{\rm T}$ where  $\bq_{2}= \big[ x_{2} ~ y_{2}  \big]^{\rm T}$ represents the independent coordinates, and $\bq_{1}= \big[  x_{1} ~ y_{1}  \big]^{\rm T}$ with  $\bq_{3}= \big[ x_{3} ~ y_{3} \big]^{\rm T}$ represent the dependent coordinates. This implies that the coordinates of point 2 are the DOF of the system on which the  coordinates of points 1 and 3 are dependent. The dependent coordinates are solved using the constraints of the system. The constraints are defined according to the fixed lengths between each set of points, as follows:
\begin{align}
	\label{eq:Constraints 5 bar}
    {\bPhi} &= \begin{bmatrix}
           \|{q_A-q_1}\|^2  -L_{A1}^2\\
			\|{q_2-q_1}\|^2  -L_{21}^2\\
           \|{q_3-q_2}\|^2  -L_{32}^2\\
			\|{q_B-q_3}\|^2  -L_{B3}^2\\
         \end{bmatrix}=0,
\end{align}
with the lengths $L_{A1}=L_{B3}=1.4142 \; m$ ; $L_{A1}=L_{B3}=1.8027 \; m $ and the ground points $\bq_{A}= \big[ \begin{array}{c c} -0.5 & 0 \end{array} \big]^{\rm T}$; $\bq_{B}= \big[ \begin{array}{c c} 0.5 & 0 \end{array} \big]^{\rm T}$. Finally, the point 2 of the five-bar mechanism hits the ground at -2.35 m along the vertical y axis. The event (touching the ground) is simulated through the event function $r(\cdot)$ described in Eq.~\eqref{eq:event_function}. Once the event is detected, the vertical velocity of point 2 jumps to its opposite value, while its horizontal velocity remains the same. 

The system of equation \eqref{eq:MBD canonical ODE system} is simulated with a time span of five seconds. Fig.~\ref{fig2} shows the residuals of the constraint equations. The position constraints are satisfied within an error of $10^{-6}$, while the velocity constraints within and error of $10^{-5}$, which is satisfactory.
\begin{figure} [H] 
\centering
	\begin{subfigure}[t]{.49\textwidth}
	\centering
	\raisebox{-4.3mm}{\includegraphics[width=0.9\textwidth]{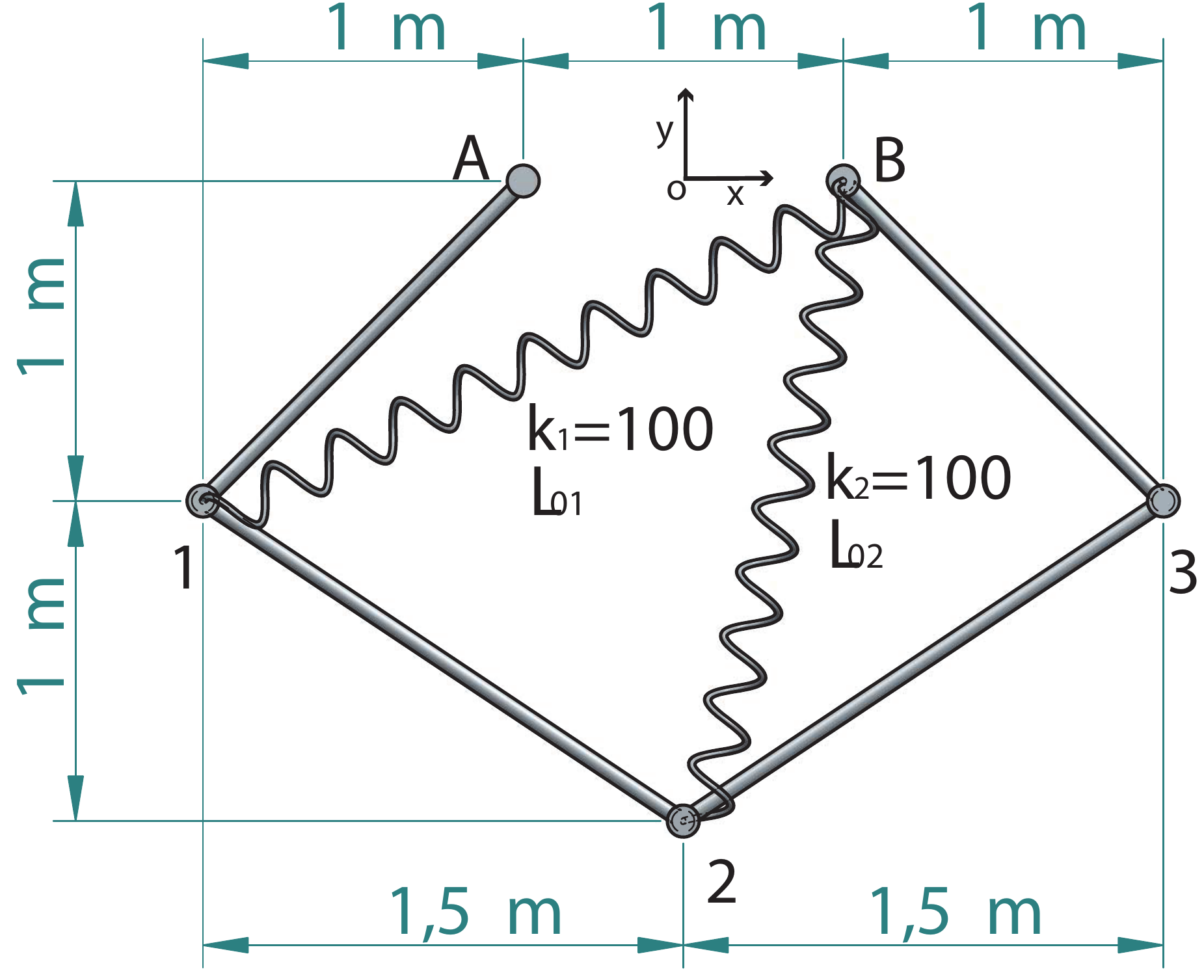}}
	\caption{Diagram of the five-bar mechanism}
	\label{Fig_fivebar}
	\end{subfigure}
	\begin{subfigure}[t]{.49\textwidth}
	\centering
	\includegraphics[width=\textwidth]{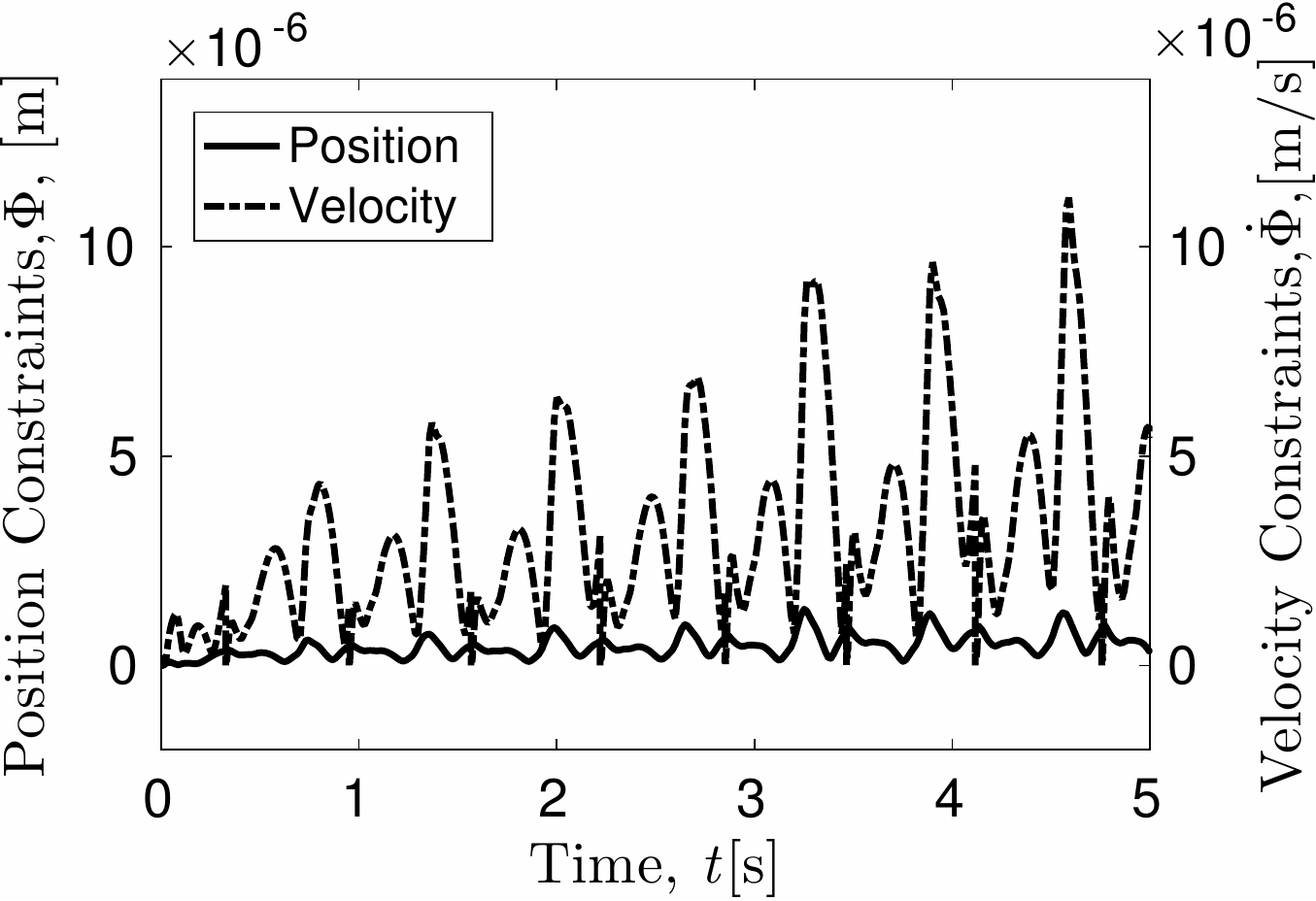}
	\captionsetup{margin=1cm}
	\caption{The position and the velocity constraint residuals for the five-bar mechanism}
	\label{fig2}
	\end{subfigure}
\caption{Structure of the five-bar mechanism}
\label{sfig1}
\end{figure}
The trajectories of the position and velocity of point 2 of the five-bar mechanism along the vertical y axis are shown in Fig.~\ref{fig3} and Fig.~\ref{fig4}, respectively. These results show that  point 2's vertical position bounces at -2.35m, as expected, and its vertical velocity jumps at each time of event.
\begin{figure} [H] 
\centering
	\begin{subfigure}{.49\textwidth}
	\centering
	\includegraphics[width=\textwidth]{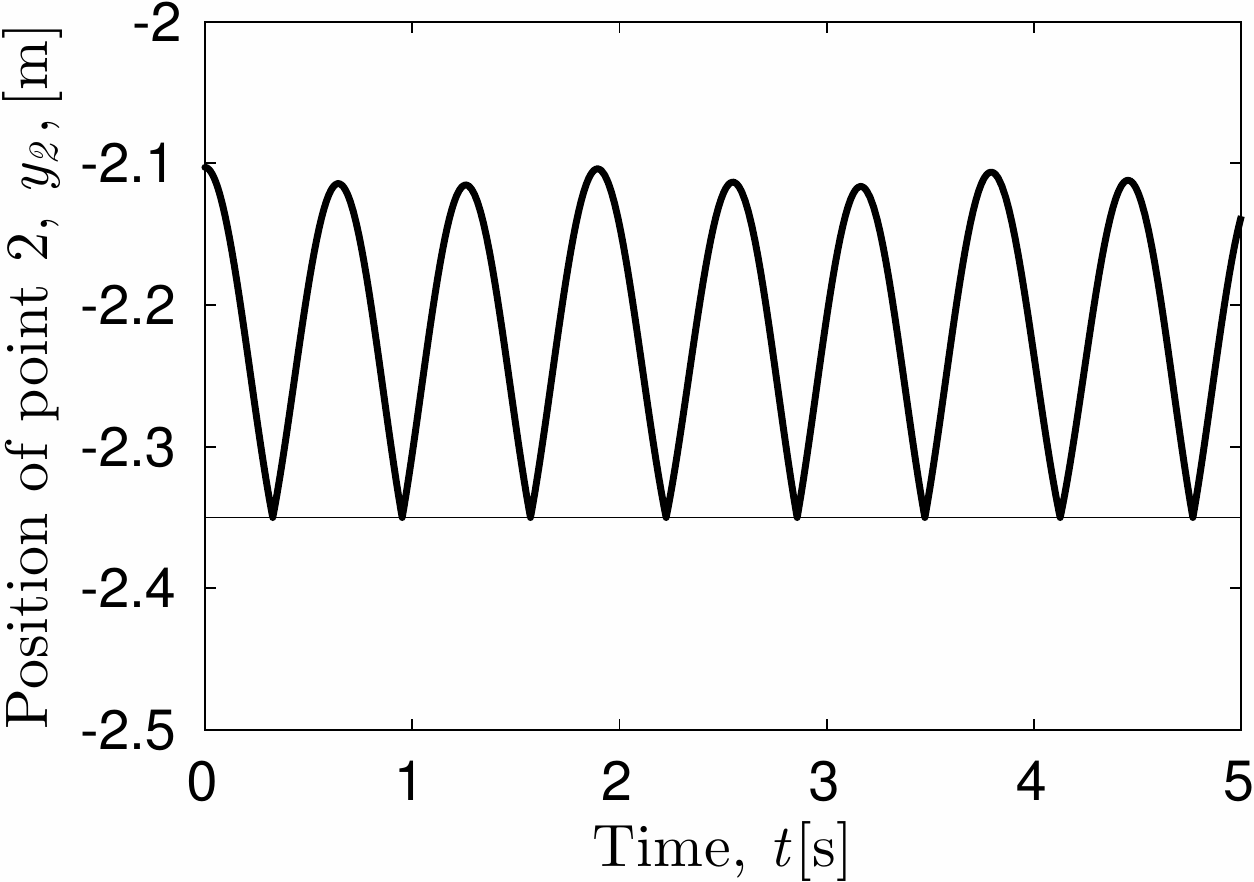}
	\captionsetup{margin=1cm}
	\caption{The vertical position of the bottom point $\pttwo$ of the five-bar mechanism}
	\label{fig3}
	\end{subfigure}
	\begin{subfigure}{.49\textwidth}
	\centering
	\includegraphics[width=\textwidth]{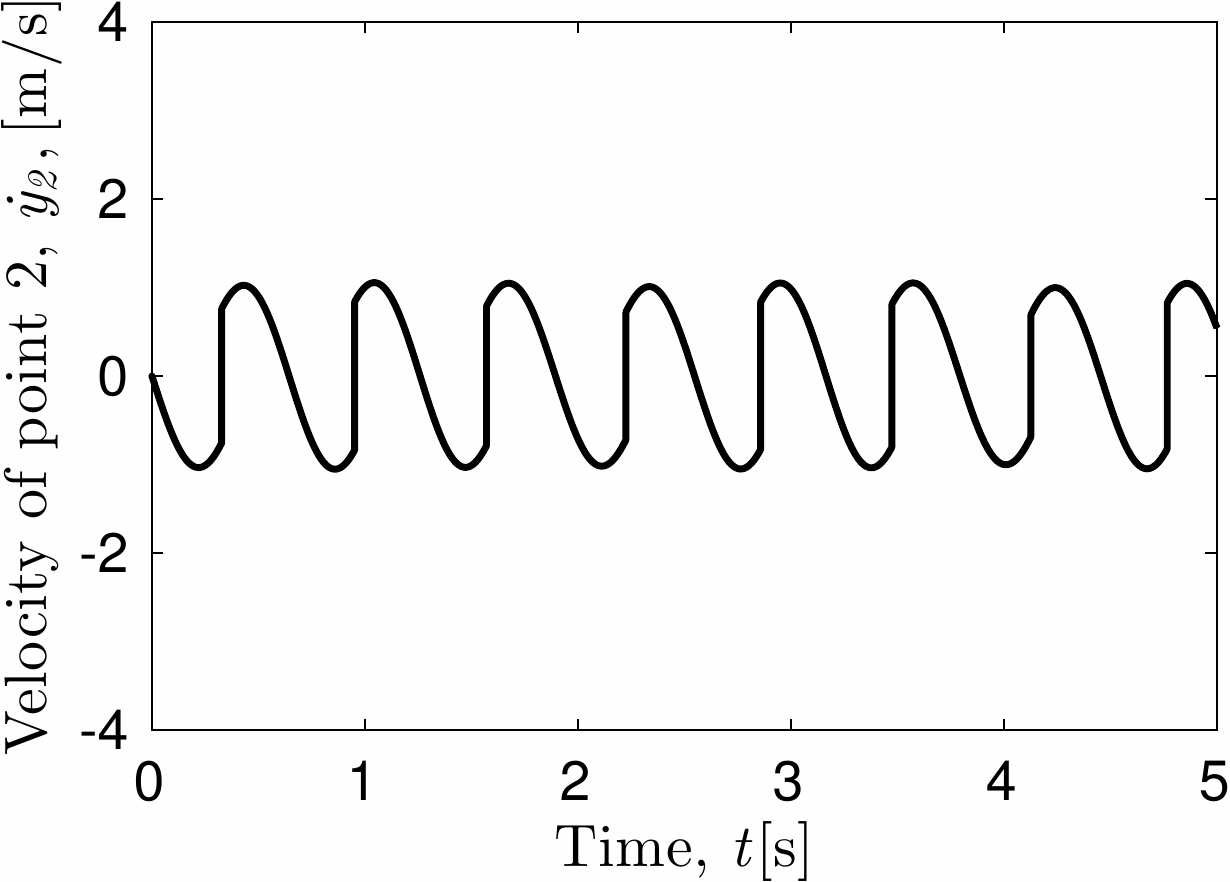}
	\captionsetup{margin=1cm}
	\caption{The vertical velocity of the bottom point $\dpttwo$ of the five-bar mechanism}
	\label{fig4}
	\end{subfigure}
\caption{Trajectories of the bottom point the five-bar mechanism}
\label{sfig2}
\end{figure}

The trajectories of the sensitivity of the position and velocity of point 2 of the five-bar mechanism along the vertical y axis are shown in Fig.~\ref{fig5} and Fig.~\ref{fig6}, respectively. The analytical sensitivity is represented by the continuous line, while the central finite difference sensitivity is represented by the dashed line. There is an excellent correlation between the numerical and the analytical sensitivities, with a difference between the two trajectories of less than $0.1\%$. Note that the numerical sensitivity of the velocity of point 2 along the vertical axis tends to be really large in magnitude, $1/\eps$ at each time of event. This is shown by the vertical dashed lines and it is due to the fact that the difference between the trajectories $v(\brho+\eps)$ and $v(\brho-\eps)$ increases considerably during $\Delta t$, as shown in Fig.~\ref{fig:im2}. This result shows that the novel analytical sensitivity method presented in this paper is considerably more robust than the numerical method, as it correctly calculates the sensitivity jumps and accurately determines the sensitivity trajectories after each invent without any delta-like jumps.
\begin{figure} [H] 
\centering
	\begin{subfigure}{.49\textwidth}
	\centering
	\includegraphics[width=\textwidth]{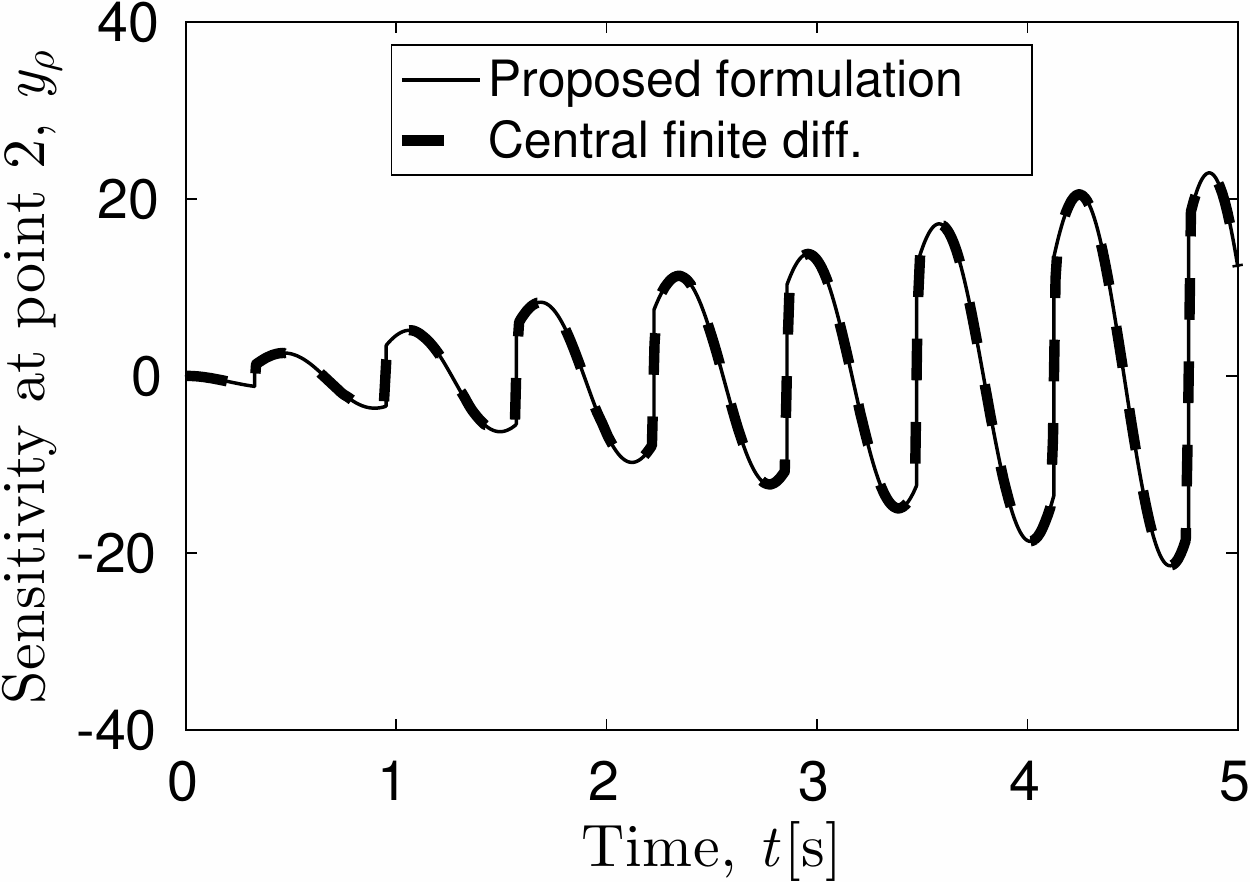}
	\captionsetup{margin=1.1cm}
	\caption{The sensitivity of the position of the bottom point $y_2$ of the five-bar mechanism}
	\label{fig5}
	\end{subfigure}
	\begin{subfigure}{.49\textwidth}
	\centering
	\includegraphics[width=\textwidth]{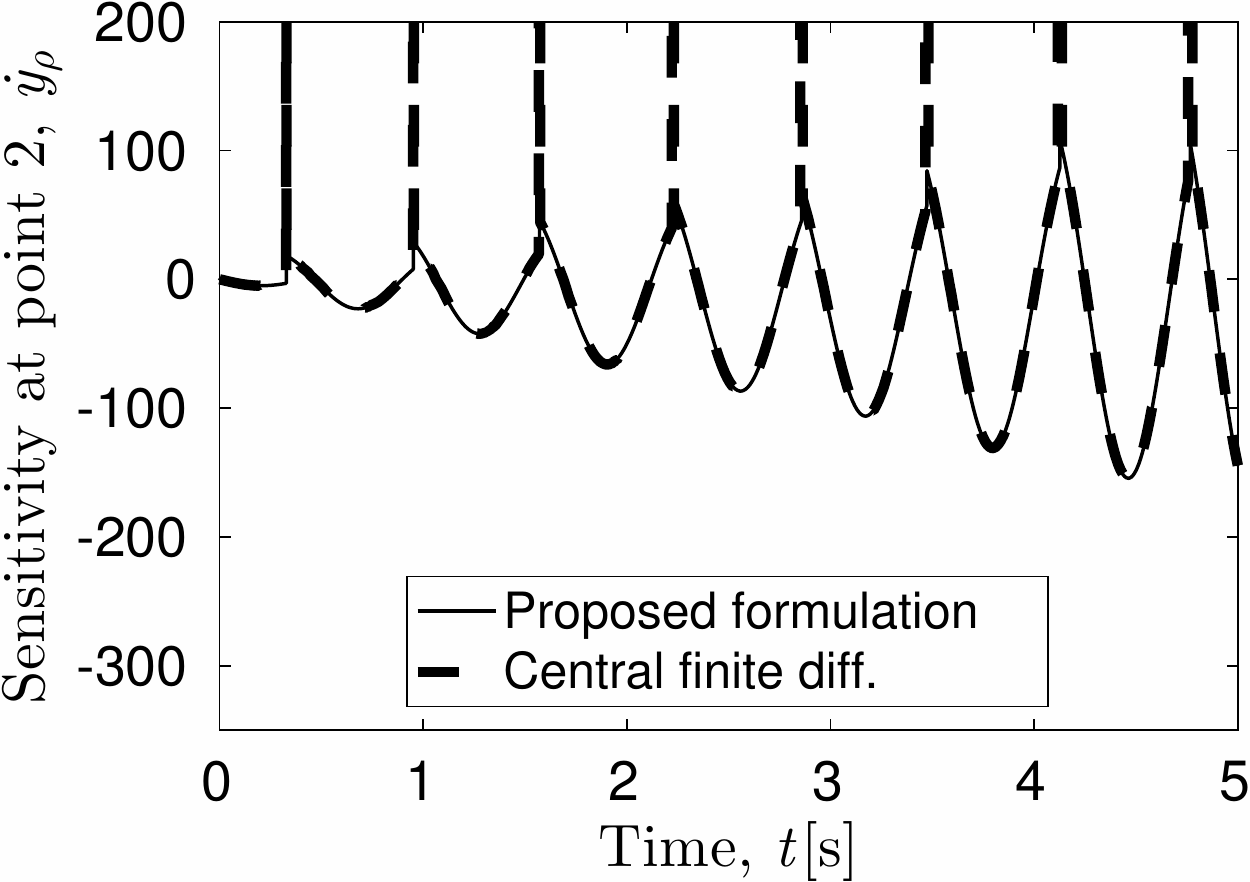}
	\captionsetup{margin=1.1cm}
	\caption{The sensitivity of the velocity of the bottom point $\dpttwo$ of the five-bar mechanism}
	\label{fig6}
	\end{subfigure}
\caption{Sensitivity analysis of the bottom point the five-bar mechanism}
\label{sfig3}
\end{figure}

The trajectories of the quadrature variables  $\zdytwo$  and $\zddytwo$ are shown in Fig.~\ref{fig7} and Fig.~\ref{fig8}, respectively. Note that $\zdytwo$  matches the trajectory of the position of point 2 along the vertical axis in Fig.~\ref{fig3}, while $\zddytwo$ does not completely match the trajectory of the velocity of point 2 in Fig.~\ref{fig4}. This is due to the fact that the velocity variable is affected by the impulse function at the time of event, while the quadrature variable is not. The trajectories after each invent differ by a constant since the quadrature variable evaluates the integral of the acceleration of point 2.
\begin{figure} [H] 
\centering
	\begin{subfigure}{.49\textwidth}
	\centering
	\includegraphics[width=\textwidth]{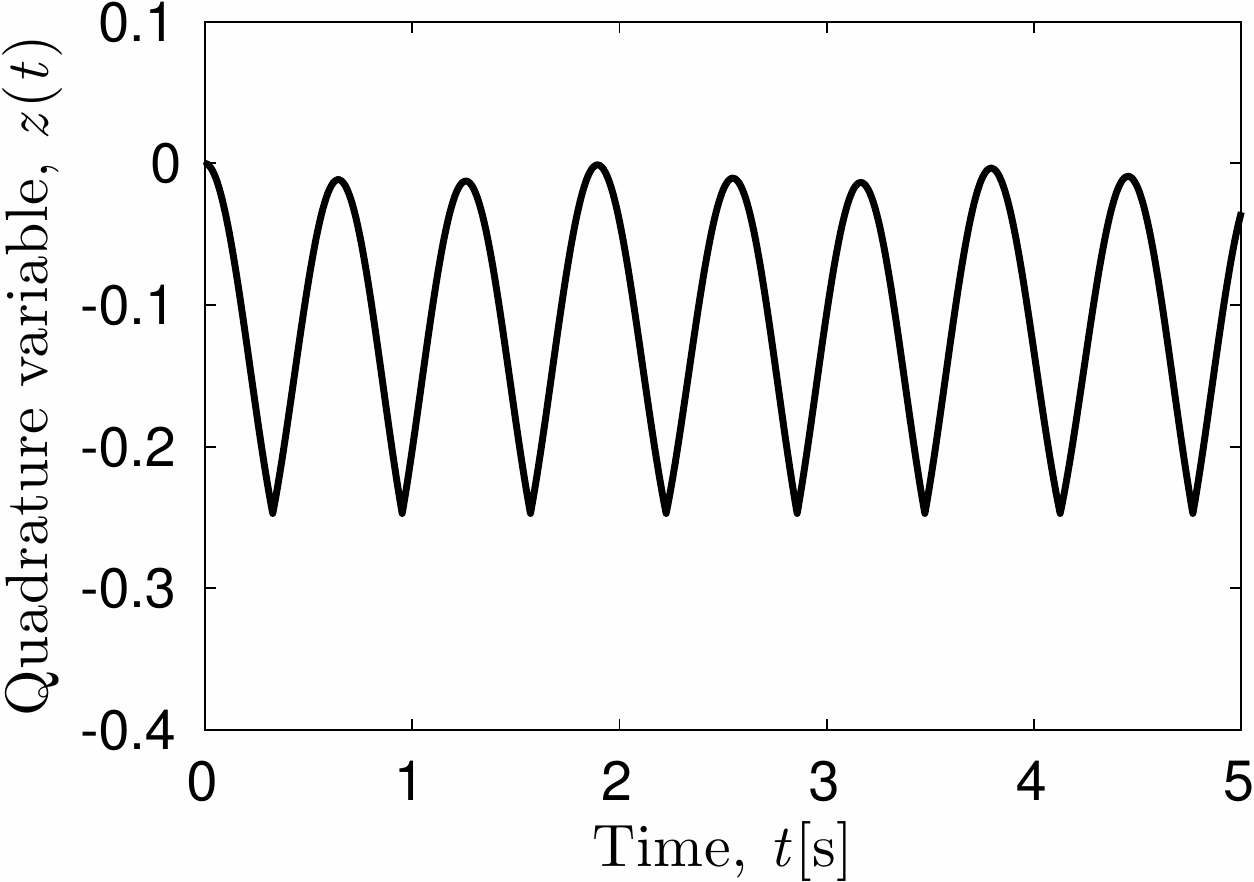}
	\captionsetup{margin=1cm}
	\caption{The quadrature variable $\zdytwo$ }
	\label{fig7}
	\end{subfigure}
	\begin{subfigure}{.49\textwidth}
	\centering
	\includegraphics[width=\textwidth]{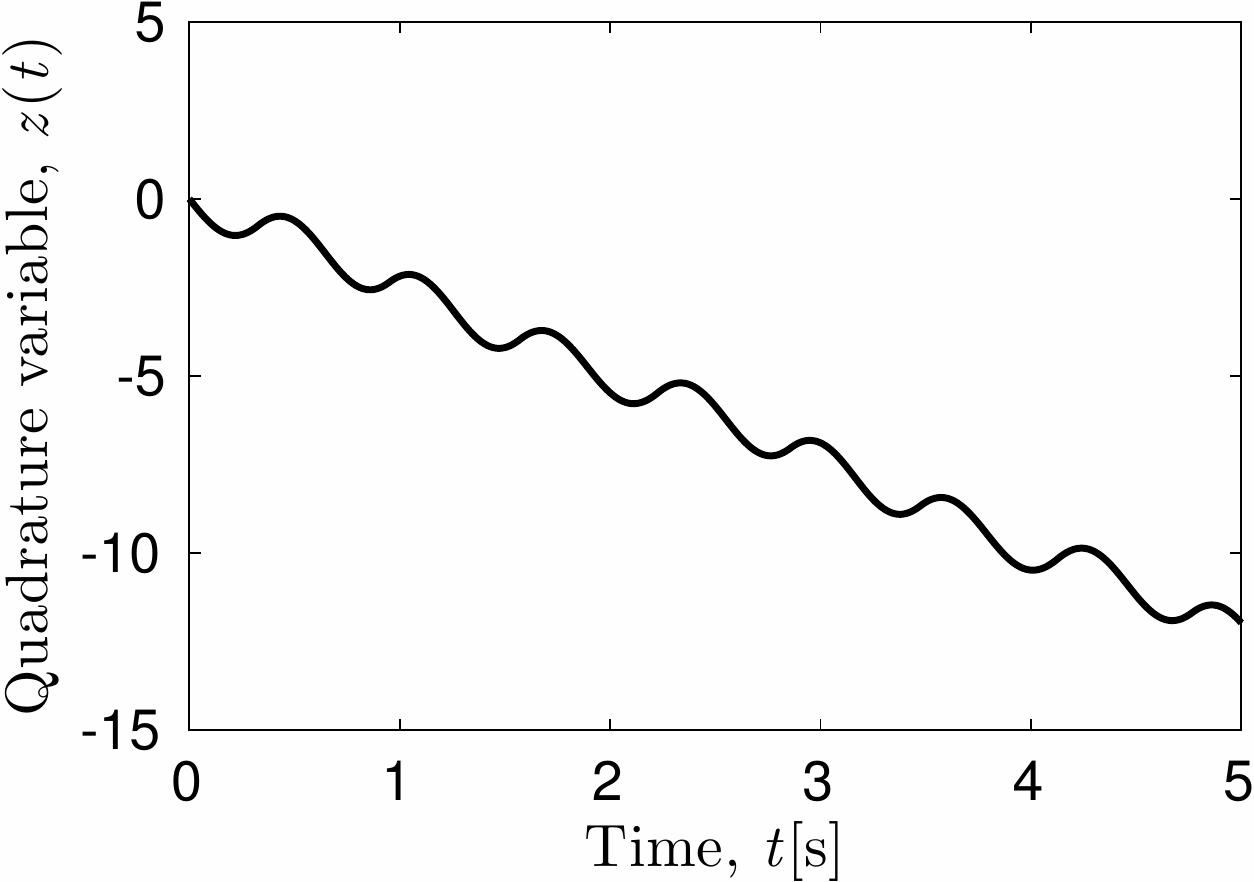}
	\captionsetup{margin=1cm}
	\caption{The quadrature variable $\zddytwo$ }
	\label{fig8}
	\end{subfigure}
\caption{The quadrature variables of the five-bar mechanism.}
\label{sfig4}
\end{figure}

The trajectories of the sensitivities of the quadrature variables  $\zdytwo$ and $\zddytwo$ are presented in  Fig.~\ref{fig9} and Fig.~\ref{fig10}, respectively. The sensitivities are with respect to the system parameters $\brho= \big[L_{01} ~~ L_{02} \big]$. The results of the analytical and numerical sensitivity analysis of the five-bar mechanism  highlight the quasi-perfect correlation between the numerical and analytical sensitivities with a difference between the two trajectories of less than $0.1\%$. Note that a similar observation with the one previously made is valid here: the sensitivity of $\zdytwo$ shown in Fig.~\ref{fig9} matches the trajectory of the sensitivity of the position illustrated in Fig.~\ref{fig5}, while the sensitivity of $\zddytwo$ shown in Fig.~\ref{fig10} does not completely match the trajectory of the sensitivity of the velocity from Fig.~\ref{fig6} because of the impulse function. 
\begin{figure} [H] 
\centering
	\begin{subfigure}{.49\textwidth}
	\centering
	\includegraphics[width=\textwidth]{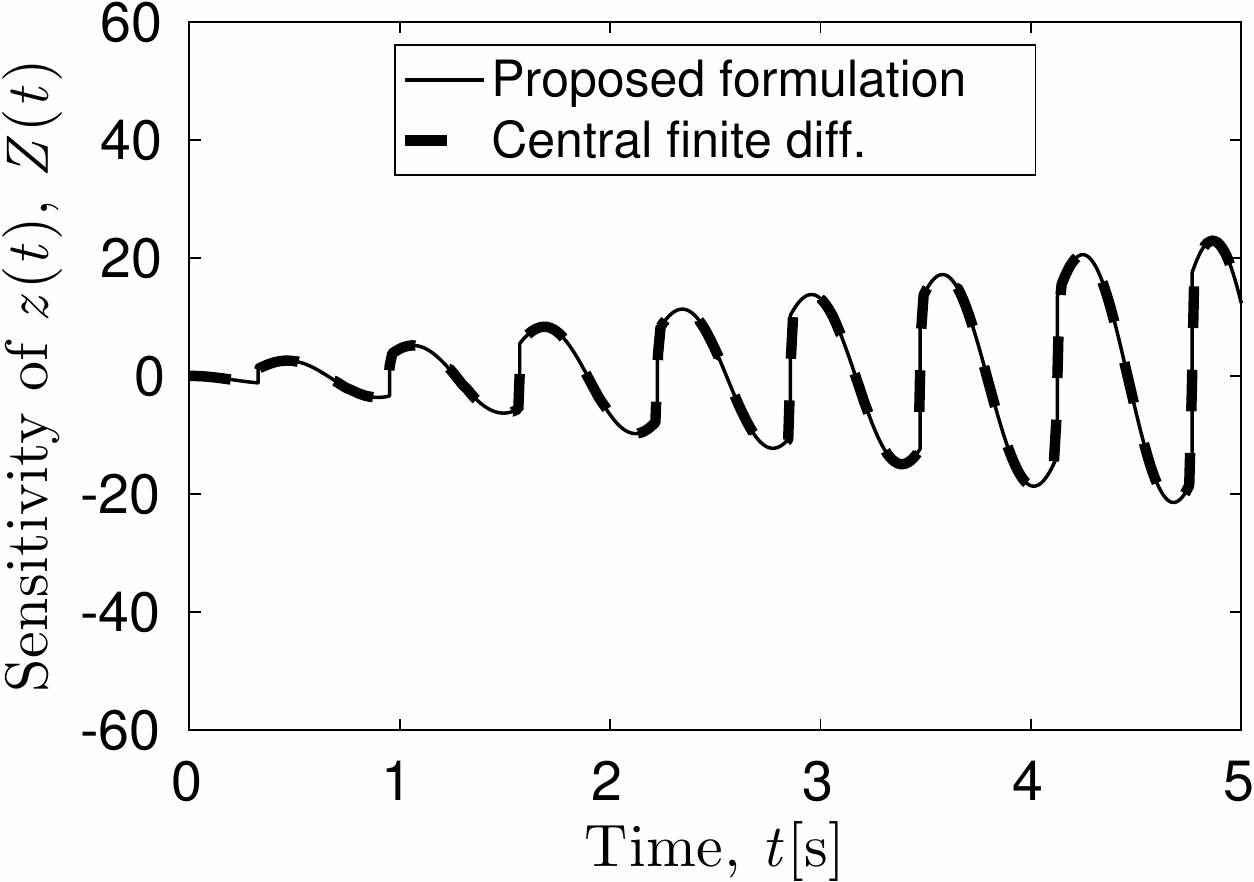}
	\captionsetup{margin=1cm}
	\caption{The sensitivity of $\zdytwo$ with $\bZ=\frac{d\bz}{d\brho}$.}
	\label{fig9}
	\end{subfigure}
	\begin{subfigure}{.49\textwidth}
	\centering
	\includegraphics[width=\textwidth]{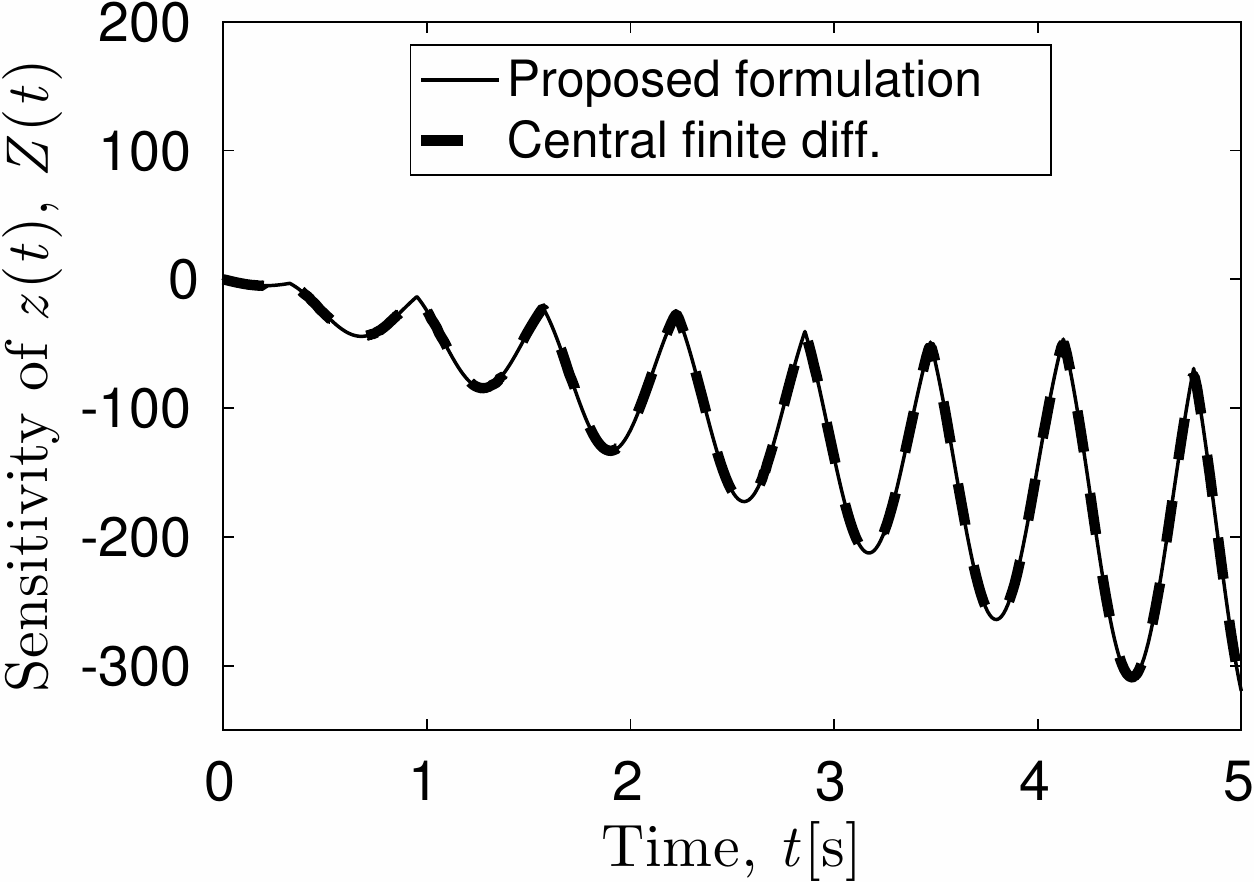}
	\captionsetup{margin=1cm}
	\caption{The sensitivity of $\zddytwo$ with $\bZ=\frac{d\bz}{d\brho}$}
	\label{fig10}
	\end{subfigure}
\caption{Sensitivity analysis of the quadrature variables of the five-bar mechanism}
\label{sfig5}
\end{figure}

\section{Conclusions}
\label{sec:conclusions}

An important ingredient in the analysis and design of multibody dynamical systems is the ability to compute sensitivities of model states with respect to system parameters. Direct and adjoint sensitivity analyses for mechanical systems with smooth trajectories have been studied in the literature \cite{Sandu_2013_sensitivity_ODE_multibody,Sandu_2014_sensitivity_ODE_multibody,zhu2014mbsvt,Zhu_2014_PhD,Sandu2015dynamic,Sandu_2017_vehicle-optimization}. This work focuses on the case where trajectories are non-smooth, and specifically, on hybrid systems where a finite number of events can instantly change the velocity state variables. 

We develop herein a complete mathematical framework needed to compute the sensitivities of model states, and of general cost functionals, with respect to model parameters for hybrid dynamical systems. The paper first considers multibody system is described by ordinary differential equations, as is the case when the dynamics is modeled using recursive coordinate systems. Next, the paper multibody systems described by differential algebraic equations, as is the case when the constraints imposed by the joints holding the system together are explicitly modeled. At the time moments when events happen, and there are points of discontinuity in the forward trajectory, the evolution of sensitivity variables is also discontinuous. Jump conditions for sensitivity variables are established for both the ordinary and the differential algebraic cases. These jump conditions specify the values of the sensitivities right after the event, given their value right before the event, and the characteristics of the impact.


A five-bar mechanism with non-smooth contacts is used as a case study. The analytical sensitivities obtained by the proposed methodology are validated against numerical sensitivities computed by real and complex finite differences.


Current  efforts  focus on  applying the  hybrid  system sensitivity analysis methodology to robotics, where sensitivities of the performance of a robotic system with respect to changes in the system configuration, under non-smooth impact conditions, are a topic of great interest. Future work will consider extending the current framework to perform adjoint sensitivity analysis of hybrid mechanical systems. 

\clearpage

\section*{Acknowledgments}
\label{sec:acknowledgments}

This work was supported in part by awards NSF DMS--1419003, NSF CCF--1613905, AFOSR DDDAS 15RT1037,
by the Computational Science Laboratory, and by the Advanced Vehicle Systems Laboratory at Virginia Tech.

%
\appendix
\section{Terminology used in Section \ref{sec:multibody-smooth} }
\label{sec:AppendixA}
In Eqs.~Eq.~\eqref{eq:TLM-ODE-penalty}  the terms $\overline{\Force}_{\bq}$, $\overline{\Force}_{\dbq}$, $\overline{\Force}_\brho$, $\overline{\Mass}_{\bq} \ddbq$, and $\overline{\Mass}_\brho \ddbq$ are given by the following expressions:
\begin{eqnarray}
\label{eq:barK}
&& \overline{\Force}_{\bq} = {\Force}_{\bq} - 
{\bPhi}_{\bq\bq}^{\rm T}\alpha\left(\dtdPhidq\dbq
\dPhi_{t}+2 \, \xi\, \omega\, \dPhi+\omega^2 \bPhi \right)-\nonumber \\
&&\dPhidq^{\rm T}\alpha\left(\left(\dtdPhidq\dbq\right)_{\bq}\!+\!
\left(\dPhi_{t}\right)_{\bq}\!+\!2 \xi \omega 
\left({\bPhi}_{\bq\bq}\dbq\!+\!{\bPhi}_{t{\bq}}\right)\!+\! 
\omega^2 \dPhidq\right), \\
\label{eq:barC}
&& \overline{\Force}_{\dbq} ={\Force}_{\dbq} - \! 
\dPhidq^{\rm T}\alpha\left({\bPhi}_{\bq\bq}\dbq\!+\!\dtdPhidq\!+\!
{\bPhi}_{tq}+2 \, \xi\, \omega\, \dPhidq\right), \\
\label{eq_dbarQdro}
&&\overline{\Force}_\brho = \Force_\brho- \bPhi_{\bq \brho}^{\rm T}\alpha\left(\dtdPhidq\dbq+
\dPhi_{t}+2 \, \xi\, \omega\, \dPhi+\omega^2 {\bPhi}\right)- \nonumber \\
&&\dPhidq^{\rm T}\alpha\left(\left(\dtdPhidq\dbq\right)_\brho+
\dPhi_{t\brho}+2 \, \xi\, \omega\, \dPhi_\brho+\omega^2 {\bPhi}_\brho\right), \\
\label{eq_dbarMdq_qs}
&&\overline{\Mass}_{\bq} \ddbq=
{\Mass}_{\bq}\ddbq+
{\bPhi}_{\bq\bq}^{\rm T} \left( \alpha\dPhidq\ddbq \right)+
\dPhidq^{\rm T} \alpha \left( {\bPhi}_{\bq\bq}\ddbq \right), \\
\label{eq_dbarMdro_qs}
&&\overline{\Mass}_\brho \ddbq =
{\Mass}_\brho\ddbq+
{\bPhi}_{\bq \brho}^{\rm T} \left( \alpha \dPhidq\ddbq \right) +
\dPhidq^{\rm T}\alpha \left( {\bPhi}_{\bq \brho}\ddbq \right).
\end{eqnarray}
In Eq.~\ref{eq:canonical-ode-penalty-sensitivity}  the terms $\blambda_\by^{*} $, $\blambda_\dby^{*} $, $\blambda_\dbv^{*}$, $\blambda_\bv^{*}$, $\blambda_{\bq}^{*}$ , and $\blambda_\brho^{*} $ are given by the following expressions:
\label{eq:MBD canonical ODE system}

\begin{eqnarray}
\label{eq:delta_lambda2}
&&\blambda_\by^{*} = \left[\begin{array}{c c}
\blambda_{\bq}^{*} & \blambda_\bv^{*}
\end{array}\right] \\
\label{eq:lambda_yp}
&&\blambda_\dby^{*} = \left[\begin{array}{c c}
\bzero & \blambda_{\dbv}^{*}
\end{array}\right] \\
\label{eq:lambda_vp}
&&\blambda_\dbv^{*} = \alpha \dPhidq \\
\label{eq:lambda_v}
&&\blambda_\bv^{*} = \alpha 
\left[{\bPhi}_\bqq \bv + \dtdPhidq + {\bPhi}_{t{\bq}}
+2 {\xi} {\omega} \dPhidq \right] \\
\label{eq:lambda_q}
&&\blambda_{\bq}^{*} = \alpha 
\left[ \bPhi_\bqq \dbv 
+\left( \dPhi_\bq\right)_\bq \bv+
\left( \bPhi_t \right)_\bq \right. \nonumber \\
&& \left. +2 \xi \omega \left(\bPhi_\bqq \bv + \bPhi_{t \bq}\right) 
+\omega^2 \dPhidq \right] \\
\label{eq:lambda_ro}
&&\blambda_\brho^{*} = \alpha 
\left[ \bPhi_{\bq \brho} \dbv+
\left(\dtdPhidq\right)_\brho \bv +\left(\dPhi_t \right)_\brho \right. \nonumber \\
&&\left. +2 \xi \omega \left(\bPhi_{\bq \brho} \bv +\bPhi_{t\brho}\right)
+\omega^2 \bPhi_\brho
\right]
\end{eqnarray}

\clearpage

\section*{References}
\bibliographystyle{elsarticle-num} 
\bibliography{biblio}


\end{document}